\newcommand{\bbr}{\mathbb{R}}
\newtheorem{assumption}{Assumption}
\newcommand{\beq}{\begin{equation}}
\newcommand{\eeq}{\end{equation}}
\newcommand{\beqa}{\begin{eqnarray}}
\newcommand{\eeqa}{\end{eqnarray}}
\newcommand{\beqas}{\begin{eqnarray*}}
\newcommand{\eeqas}{\end{eqnarray*}}
\newcommand{\bi}{\begin{itemize}}
\newcommand{\ei}{\end{itemize}}
\newcommand{\ba}{\begin{array}}
\newcommand{\ea}{\end{array}}
\newcommand{\nn}{\nonumber}
\def\eqnok#1{(\ref{#1})}
\def\zi{{z^{(i)}}}
\def\xtoi{{x^{(i)}}}
\def\argmin{{\rm argmin}}
\def\w{\omega}
\def\cD{{\cal D}} 
\newcommand{\bbe}{\mathbb{E}}
\def\prob{\mathop{\rm Prob}}
\def\Prob{{\hbox{\rm Prob}}}
\def\CP{{\cal P}}
\def\PG{{{\cal G}}}
\def\vgap{\vspace*{.1in}}
\title{Stochastic Block Mirror Descent Methods for \\
Nonsmooth and Stochastic Optimization
\thanks{September, 2013.
This research was partially supported by NSF
grants CMMI-1000347, CMMI-1254446, DMS-1319050, and ONR grant N00014-13-1-0036.
}}
\author{
 Cong D. Dang
    \thanks{Department of Industrial and Systems
    Engineering, University of Florida, Gainesville, FL, 32611.
    (email: {\tt congdd@ufl.edu}). }
    \and
     Guanghui Lan
    \thanks{Department of Industrial and Systems
    Engineering, University of Florida, Gainesville, FL, 32611.
    (email: {\tt glan@ise.ufl.edu}).}
}
\begin{document}

\maketitle
\begin{abstract}
In this paper, we present a new stochastic algorithm,
namely the stochastic block mirror descent (SBMD) method for
solving large-scale nonsmooth and stochastic optimization problems.
The basic idea of this algorithm is to incorporate the block-coordinate decomposition
and an incremental block averaging scheme into the classic (stochastic)
mirror-descent method, in order to significantly reduce the cost per
iteration of the latter algorithm. We establish the rate of convergence of the SBMD method
along with its associated large-deviation results for solving
general nonsmooth and stochastic optimization problems.
We also introduce different variants of this method and establish their
rate of convergence for solving strongly convex,
smooth, and composite optimization
problems, as well as certain nonconvex optimization problems.
To the best of our knowledge, all these developments related
to the SBMD methods are new in the stochastic optimization literature.
Moreover, some of our results also seem to be new for block coordinate
descent methods for deterministic optimization.

\vspace{.1in}

\noindent {\bf Keywords.} Stochastic Optimization, Mirror Descent,
Block Coordinate Descent, Nonsmooth Optimization, Stochastic Composite Optimization,
Metric Learning

\vspace{.1in}

\noindent {\bf AMS subject classifications.} 62L20, 90C25, 90C15, 68Q25
\end{abstract}

\setcounter{equation}{0}
\section{Introduction} \label{sec_intro}

The basic problem of interest in the paper is the stochastic
programming (SP) problem given by
\beq \label{sp}
f^* := \min_{x \in X} \{ f(x) := \bbe[F(x, \xi)]\}.
\eeq
Here $X \in \bbr^n$ is a closed convex set,
$\xi$ is a random variable with support $\Xi \subseteq \bbr^d$
and $F(\cdot, \xi): X \to \bbr$ is continuous for every $\xi \in \Xi$.
In addition, we assume that $X$ has a block structure, i.e.,
\beq \label{defX}
X = X_1  \times X_2  \times  \cdots  \times X_b,
\eeq
where $X_i\subseteq \bbr^{n_i}$, $i = 1, \ldots,b$, are closed convex sets with $n_1+n_2+\ldots+n_b=n$.

The last few years have seen a resurgence of interest in
the block coordinate descent (BCD) method for solving problems with $X$ given in the form of
\eqnok{defX}. In comparison with regular first-order methods,
each iteration of these methods updates only one block of
variables. 
In particular, if each block consists of only one variable (i.e., $n_i = 1, i =1, \ldots,b$),
then the BCD method becomes the simplest coordinate descent (CD) method.
Although simple, these methods are found to be effective in solving huge-scale
problems with $n$ as big as $10^8 - 10^{12}$
(see, e.g.,~\cite{Nest10-1,LevLew10-1,Nest12-1,Rich12-1,BeckTet13-1}),
and hence are very useful for dealing with high-dimensional problems, especially those
from large-scale data analysis applications.
While earlier studies on the BCD method were focused on their asymptotically convergence behaviour
(see, e.g.,~\cite{LuoTseng91-1,Tseng01-1} and also \cite{TsengYun09-1,Wright10-1}),
much recent effort has been directed to the complexity analysis of these types of
methods (see \cite{Nest10-1,LevLew10-1,ShaTew11,Rich12-1,BeckTet13-1}.
In particular, Nesterov~\cite{Nest10-1} was among the first (see also~Leventhal and Lewis~\cite{LevLew10-1},
and Shalev-Shwartz and Tewari~\cite{ShaTew11}) to analyze the iteration complexity of a randomized BCD method
for minimizing smooth convex functions. More recently, the BCD methods were further enhanced by
Richt\'{a}rik and Tak\'{a}\u{c}~\cite{Rich12-1}, Beck and Tetruashvili~\cite{BeckTet13-1}, Lu and Xiao~\cite{LuXiao13-1}, etc.
We refer to \cite{Rich12-1} for an excellent review on the earlier developments of BCD methods.

However, to the best of our knowledge, most current BCD methods
were designed for solving deterministic optimization problems.
One possible approach for solving problem \eqnok{sp}, based on existing BCD methods
and the sample average approximation (SAA)~\cite{ShDeRu09},
can be described as follows.
For a given set of i.i.d. samples (dataset) $\xi_k, k = 1, \ldots, N$, of $\xi$,
we first approximate $f(\cdot)$ in \eqnok{sp} by
$
\tilde{f}(x) := \frac{1}{N} \sum_{k=1}^N F(x,\xi_k)
$
and then apply the BCD methods to $\min_{x \in X} \tilde f(x)$.
Since $\xi_k$, $k =1, \ldots, N$, are fixed a priori,
by recursively updating the (sub)gradient of $\tilde{f}$ (see~\cite{Nest10-1,Nest12-1}),
the iteration cost of the BCD method can be considerably
smaller than that of the gradient descent methods.
However, the above SAA approach is also known for the following drawbacks:
a) the high memory requirement to store $\xi_k$, $k =1, \ldots, N$;
b) the high dependence (at least linear) of the iteration cost on the sample size $N$,
which can be expensive when dealing with large datasets; and c)
the difficulty to apply the approach to the on-line setting where one needs to update
the solution whenever a new piece of data $\xi_k$ is collected.

A different approach to solve problem \eqnok{sp} is called stochastic approximation (SA),
which was initially proposed by Robbins and Monro~\cite{RobMon51-1} in 1950s
for solving strongly convex SP problems (see also \cite{pol90,pol92}). The SA method has also attracted much interest
recently (see, e.g.,~\cite{NJLS09-1,Lan10-3,NiuRech11-1,GhaLan12-2a,GhaLan13-1,LinChePe11-1,GhaLan12,NO13-1,GhaLanZhang13-1,JNTV05-1,JRT08-1,Nest06-2,Xiao10-1}).
In particular, Nemirovski et. al. \cite{NJLS09-1} presented
a properly modified SA approach, namely, the mirror descent SA for solving general nonsmooth convex SP problems.
Lan~\cite{Lan10-3} introduced a unified optimal SA method for smooth, nonsmooth and stochastic optimization
(see also~\cite{GhaLan12-2a,GhaLan13-1} for a more general framework).
Ghadimi and Lan~\cite{GhaLan12} presented novel SA methods for nonconvex optimization (see also~\cite{GhaLanZhang13-1}).
Related methods, based on dual averaging, have been studied in \cite{JNTV05-1,JRT08-1,Nest06-2,Xiao10-1}.
Note that all these SA algorithms only need to access one single
$\xi_k$ at each iteration, and hence does not require much memory.
In addition, their iteration cost is independent of the sample size $N$.
However, since these algorithms need to update the whole vector $x$ at each iteration,
their iteration cost can strongly depend on $n$ unless the problem is very sparse (see, e.g., \cite{NiuRech11-1}).
In addition, it is unclear whether the SA methods can benefit from
the recursive updating as in the BCD methods, since the samples $\xi_k$ used in
different iterations are supposed to be independent.


Our main goal in this paper is to present a new class of stochastic methods,
referred to as the stochastic block mirror descent (SBMD) methods,
by incorporating the aforementioned block-coordinate decomposition
into the classic (stochastic) mirror descent method (\cite{nemyud:83,BeckTeb03-1,NJLS09-1}).
Our study has been mainly motivated by solving an important class of
SP problems with $F(x, \xi) = \psi(B x, \xi)$, where $B$ is a certain linear operator
and $\psi$ is a relatively simple function. These problems arise
from many machine learning applications, where $\psi$ is a loss function
and $B$ denotes a certain basis (or dictionary)
obtained by, e.g., metric learning (e.g.,~\cite{Xing02distancemetric}).
Each iteration of existing SA methods would require
${\cal O}(n^2)$ arithmetic operations to compute $B x$ and becomes prohibitive if $n$ exceeds $10^6$.
On the other hand, by using block-coordinate decomposition with $n_i =1$,
the iteration cost of the SBMD algorithms can be significantly reduced to ${\cal O} (n)$,
which can be further reduced if $B$ and $\xi_k$ are sparse (see Subsection~\ref{sec_alg}
for more discussions).
Our development has also been motivated by the situation when the bottleneck of the mirror descent
method exists in the projection (or prox-mapping) subproblems (see \eqnok{def_prox_mapping}).
In this case, we can also significantly reduce the iteration cost by
using the block-coordinate decomposition, since each iteration of the SBMD method requires
only one projection over $X_i$ for some $1 \le i \le b$,
while the mirror descent method needs to perform the projections over $X_i$ for all $1 \le i \le b$.

Our contribution in this paper mainly lies in the following aspects.
Firstly, we introduce the block decomposition into the classic
mirror descent method for solving general nonsmooth optimization problems.
Each iteration of this algorithm updates one block
of the search point  along a stochastic (sub)gradient $G_{i_k}(x_k, \xi_k)$.
Here, the index $i_k$ is randomly chosen and $G(x, \xi)$ is an unbiased estimator
of the subgradient of $f(\cdot)$, i.e.,
\beq \label{assump1}
\bbe[G(x, \xi)] = g(x) \in \partial f(x), \ \ \forall x \in X.
\eeq
In addition, in order to
compute the output of the algorithm, we introduce
an {\sl incremental block averaging} scheme, which updates only
one block of the weighted sum of the search points in each iteration.
We demonstrate that if $f(\cdot)$ is a general nonsmooth convex function, then the number of iterations
performed by the SBMD method to find an $\epsilon$-solution of \eqnok{sp}, i.e.,
a point $\bar x \in X$ such that (s.t.) $\bbe[f(\bar x) - f^*] \le \epsilon$,
can be bounded by ${\cal O}(b / \epsilon^2)$. Here the expectation
is taken w.r.t. the random elements $\{i_k\}$ and $\{\xi_k\}$.
In addition, if $f(\cdot)$ is strongly convex, then
the number of iterations performed by the SBMD method (with a different stepsize policy and averaging scheme)
to find an $\epsilon$-solution of \eqnok{sp} can be bounded by ${\cal O}(b/\epsilon)$.
We also derive the large-deviation results associated with
these rates of convergence for the SBMD algorithm.
Secondly, we consider a special class of convex stochastic
composite optimization problems given by
\beq \label{spc}
\phi^* := \min_{x \in X} \left\{\phi(x) := f(x) + \chi(x) \right\}.
\eeq
Here $\chi(\cdot)$ is a relatively simple convex function and $f(\cdot)$
defined in \eqnok{sp} is a smooth convex function with Lipschitz-continuous
gradients $g(\cdot)$. We show that, by properly modifying the SBMD method,
we can significantly improve the aforementioned complexity bounds in
terms of their dependence on the Lipschitz constants of $g(\cdot)$. We show that the complexity bounds
can be further improved if $f(\cdot)$ is strongly convex.
Thirdly, we generalize our study to a class of nonconvex
stochastic composite optimization problems in the form of \eqnok{spc}, but
with $f(\cdot)$ being possibly nonconvex. Instead of using
the aforementioned incremental block averaging, we
incorporate a certain randomization scheme
to compute the output of the algorithm. We also establish the complexity
of this algorithm to generate an approximate stationary
point for solving problem \eqnok{spc}.

While this paper focuses on stochastic optimization, it is worth noting that some
of our results also seem to be new in the literature for the BCD methods for deterministic
optimization. Firstly,
currently the only BCD-type methods for solving general nonsmooth CP problems
are based on the subgradient methods without involving averaging, e.g.,
those by Polak and a constrained version by Shor (see Nesterov~\cite{Nest12-1}).
Our development shows that it is possible to develop new BCD type methods involving different
averaging schemes for convex optimization.
Secondly, the large-deviation result for the BCD methods for general nonsmooth
problems and the ${\cal O}(b/\epsilon)$ complexity result for the BCD methods for
general nonsmooth strongly convex problems are  new in the literature. Thirdly, it appears to us that
the complexity for solving nonconvex optimization by the BCD methods has not been studied before in the literature.

This paper is organized as follows. After reviewing some notations in Section 1.1,
we present the basic SBMD algorithm
for general nonsmooth optimization and discuss its convergence
properties in Section 2. A variant of this algorithm for solving
convex stochastic composite optimization problems, along with its
complexity analysis are developed in Section 3. A generalization
of this algorithm for solving nonconvex stochastic composite
optimization is presented in Section 4. Finally some brief concluding
remarks are given in Section 5.

\subsection{Notation and terminology}
Let $\bbr^{n_i}$, $i = 1, \ldots, b$, be Euclidean spaces equipped with
inner product $\langle \cdot, \cdot \rangle$ and norm $\|\cdot\|_i$
($\|\cdot\|_{i,*}$ be the conjugate) such that $\sum_{i=1}^b n_i = n$.
Let $I_n$ be the identity matrix in $\bbr^n$ and $U_i\in \bbr^{n \times n_i}, i=1,2,\ldots,b,$ be the set of matrices satisfying
\[
(U_1, U_2, \ldots, U_b ) = I_n .
\]
For a given $x \in \bbr^n$, we denote its $i$-th block by $x^{(i)}=U_i^Tx$, $i=1,\ldots,b$.
Note that $$x= U_1 x^{(1)} + \ldots + U_b x^{(b)}.$$ Moreover, we define
$$\| x\|^2 = \| x^{(1)}\|_{1}^2+\ldots+\| x^{(b)}\|_{b}^2.$$
and denote its conjugate by
$\| y\|_*^2 = \| y^{(1)}\|_{1,*}^2+\ldots+\| y^{(b)}\|_{b,*}^2$.

Let $X$ be defined in \eqnok{defX} and $f: X \to \bbr$ be
a closed convex function. For any $x \in X$, let $G(x, \xi)$ be a stochastic subgradient of $f(\cdot)$ such that
\eqnok{assump1} holds. We denote the partial stochastic subgradient of $f(\cdot)$ by
$G_i(x,\xi )=U_i^TG(x,\xi), \; i=1,2,...,b.$


\setcounter{equation}{0}
\section{The SBMD methods for nonsmooth convex optimization} \label{sec_nonsmooth}
In this section, we present the stochastic block coordinate descent
method for solving stochastic nonsmooth convex optimization problems
and discuss its convergence properties.
More specifically, we present the basic scheme of
the SBMD method in Subsection~\ref{sec_alg} and discuss its convergence
properties for solving general nonsmooth and
strongly convex nonsmooth problems in Subsections~\ref{sec_nonsmooth_ns} and \ref{sec_nonsmooth_s},
respectively.

Throughout this section, we assume that  $f(\cdot)$ in \eqnok{sp} is convex and its stochastic subgradients
satisfy, in addition to
\eqnok{assump1}, the following condition:
\beq \label{assump1.a}
\bbe[\| G_i(x,\xi)\|_{i,*}^2] \leq M_i^2, \; i=1,2,...,b.
\eeq
Clearly, by \eqnok{assump1} and \eqnok{assump1.a}, we have
\beq \label{bnd_G_i}
\|g_i(x)\|_{i,*}^2 = \|\bbe[G_i(x,\xi)]\|_{i,*}^2 \le
\bbe[\| G_i(x,\xi)\|_{i,*}^2] \le M_i^2, \; i=1,2,...,b,
\eeq
and
\beq \label{bnd_G}
\|g(x)\|_*^2 = \sum_{i=1}^b \|g_i(x)\|_{i,*}^2 \le \sum_{i=1}^b M_i^2.
\eeq

\subsection{The SBMD algorithm for nonsmooth problems} \label{sec_alg}
We present a general scheme of the SBMD algorithm,
based on Bregman's divergence, to solve stochastic convex optimization problems.

Recall that a function $\omega_i:X_i \rightarrow R$ is a distance generating function~\cite{NJLS09-1}
with modulus $\alpha_i$ with respect to $\| \cdot \| _i$, if $\omega$ is continuously
differentiable and strongly convex with parameter $\alpha_i$
with respect to $\| \cdot \|_i$. Without loss of generality, we assume throughout
the paper that $\alpha_i=1$ for any $i = 1, \ldots, b$. Therefore, we have
$$
\langle x  - z, \nabla \omega_i ( x ) - \nabla \omega_i { (z)} \rangle
\ge \| {x  - z} \|_i^2 \ \ \forall x ,z \in X_i .
$$
The prox-function associated with $\omega_i$ is given by
\beq \label{def_vi}
V_i (z,x) = \omega _i (x) - [\omega _i (z)
+ \langle {\omega '_i (z),x - z } \rangle ] \ \ \forall x, z \in X_i.
\eeq
The prox-function $V_i(\cdot,\cdot)$ is also called the Bregman's distance,
which was initially studied by Bregman \cite{Breg67} and later by many others
(see \cite{AuTe06-1,BBC03-1,Teb97-1} and references therein).
For a given $x \in X_i$ and $y \in \bbr^{n_i}$, we define the prox-mapping as
\beq \label{def_prox_mapping}
P_i (v,y,\gamma) = \arg \min\limits_{u \in X_i }
\langle {y,u} \rangle  + \frac{1}{\gamma}V_i (u,v).
\eeq
Suppose that the set $X_i$ is bounded, the distance generating function
$\w_i$ also gives rise to the  following characteristic entity that
will be used frequently in our convergence analysis:
\beq \label{def_D_i}
{\cal D}_i  \equiv {\cal D}_{\omega _i ,X_i }
:= \left( {\mathop {\max }\limits_{x \in X_i } \omega _i (x)
- \mathop {\min }\limits_{x \in X_i } \omega _i (x)} \right)^{\frac{1}{2}}.
\eeq
Let $x_1^{(i)} = \argmin_{x \in X_i} \w_i(x)$, $i =1, \ldots, b$. We can easily see that
for any $x \in X$,
\beq \label{bound_V_i}
V_i (x_1^{(i)} ,x^{(i)} ) = \w_i(x^{(i)})
- \w_i(x_1^{(i)}) - \langle \nabla
\w_i(x_1^{(i)}), x^{(i)}- x_1^{(i)}\rangle
\le \w_i(x^{(i)}) - \w_i(x_1^{(i)}) \le {\cal D}_i,
\eeq
which, in view of the strong convexity of $\w_i$, also implies that
$\|x_1^{(i)} - x^{(i)}\|_i^2 / 2 \le {\cal D}_i$. Therefore,
for any $x, y \in X$, we have
\begin{align}
\|x^{(i)} - y^{(i)}\|_i &\le \|x^{(i)} - x_1^{(i)}\|_i
+ \|x_1^{(i)} - y^{(i)}\|_i \le 2 \sqrt{2 \cD_i}, \label{boundX_i}\\
\|x - y\| &= \sqrt{\sum_{i=1}^b \|x^{(i)} - y^{(i)}\|_i^2}
\le 2 \sqrt{2\sum_{i=1}^b \cD_i}. \label{boundX}
\end{align}

\vgap

With the above definition of the prox-mapping,
we can formally describe the stochastic block mirror descent (SBMD) method
as follows.

\vgap

\begin{algorithm} [H]
    \caption{The Stochastic Block Mirror Descent (SBMD) Algorithm}
    \label{algSBMD}
    \begin{algorithmic}
\STATE Let $x_1 \in X$, stepsizes $\{\gamma_k\}_{k \ge 1}$,
weights $\{\theta_k\}_{k \ge 1}$, and
probabilities $p_i \in [0,1]$, $i = 1, \ldots, b$, s.t. $\sum_{i=1}^b p_i = 1$
be given. Set $s_1 = 0,$ and $u_i = 1$ for $i = 1, \ldots, b$.

\FOR {$k=1, \ldots,N$}

\STATE 1. Generate a random variable $i_k$ according to
\beq \label{eqn_prob}
\prob\left\{i_k = i\right\} = p_i, \ \  i = 1, \ldots, b.
\eeq

\STATE 2. Update $s^{(i)}_k$, $i = 1, \ldots, b$, by
\beq \label{eqn_sum}
s^{(i)}_{k+1}=
\left\{
\begin{array}{ll}
s_{k}^{(i)}  + x_{k}^{(i)} \sum_{j = u_{i_k}}^{k} \theta_j & i = i_k,\\
s^{(i)}_{k} & i \neq i_k,
\end{array}
\right.
\eeq

and then set $u_{i_k} = k+1$.
\STATE 3. Update
$x_k^{(i)}$, $i = 1, \ldots, b$, by
\beq \label{eqn_prox}
x^{(i)}_{k+1}=
\left\{
\begin{array}{ll}
P_{i}(x_{k}^{(i)}, G_{i_k}(x_{k},\xi_{k}), \gamma_k ) & i = i_k,\\
x^{(i)}_{k} & i \neq i_k.
\end{array}
\right.
\eeq
\ENDFOR

{\bf Output:}  Set $s_{N+1}^{(i)} = s_{N+1}^{(i)}  + x_{N}^{(i)} \sum_{j = u_{i}}^{N} \theta_j$, $i = 1, \ldots, b$,
and  ${\bar x}_N = s_{N+1} / \sum \limits_{k=1}^{N} \theta_k.$
    \end{algorithmic}
\end{algorithm}

We now add a few remarks about the SBMD algorithm stated above.
Firstly, each iteration of the SBMD method recursively updates
the search point $x_k$ based on the partial stochastic subgradient
$G_{i_k}(x_{k},\xi_{k})$. In addition, rather than
taking the average of $\{x_k\}$ in the end of algorithm as
in the mirror-descent method, we introduce an incremental
block averaging scheme to compute the output of the algorithm.
More specifically, we use a summation vector $s_k$ to
denote the weighted sum of $x_k$'s and
the index variables $u_i$, $i=1,\ldots,b$,
to record the latest iteration when the $i$-th block of
$s_k$ is updated. Then in \eqnok{eqn_sum},
we add up the $i_k$-th block of $s_k$ with $x_k \sum_{j=i_k}^k \theta_j $,
where $\sum_{j=i_k}^k \theta_j$
is often given by explicit formula and hence easy to compute.
It can be checked that by using this averaging scheme, we have
\beq \label{def_bar_x}
\bar x_N 
= \left(\sum_{k=1}^N \theta_k \right)^{-1}  \sum_{k=1}^N (\theta_k x_k).
\eeq

Secondly, observe that in addition to \eqnok{eqn_sum} and \eqnok{eqn_prox},
each iteration of the SBMD method involves the computation of
$G_{i_k}$. Whenever possible, we
should update $G_{i_k}$ recursively in order to reduce the iteration cost
of the SBMD algorithm. Consider an important class of SP problems with
the objective function
\[
f(x) = \bbe[\psi(B x - q, \xi)] + \chi(x),
\]
where $\psi(\cdot)$ and $\chi(\cdot)$ are relatively simple functions, $q \in \bbr^n$, and $B\in \bbr^{n \times n}$.
For the sake of simplicity, let us also assume that $n_1 = \ldots = n_b = 1$.
For example, in the well-known support vector machine (SVM) problem, we have $\psi(y) = \max\left\{
\langle y, \xi\rangle, 0\right\}$ and $\chi(x) = \|x\|_2^2/2$.
In order to compute the full vector $G(x_k, \xi_k)$,
we need ${\cal O}(n^2)$ arithmetic operations to compute the vector $B x_k - q$,
which majorizes other arithmetic operations if $\psi$ and $\chi$ are simple.
On the other hand, by recursively updating the vector $y_k = B x_k$
in the SBMD method, we can significantly reduce the iteration cost
from ${\cal O}(n^2)$ to ${\cal O}(n)$.
This bound can be further reduced if both  $\xi_k$ and $B$ are sparse (i.e., the vector $\xi_k$ and
each row vector of $B$ contain just a few nonzeros).
The above example can be generalized to the case when $B$ has $r \times b$ blocks
denoted by $B_{i,j} \in \bbr^{m_i \times n_j}$,
$1 \le i \le r$ and $1 \le j \le b$, and each block row $B_i = (B_{i,1}, \ldots, B_{i,b})$,
$i = 1, \ldots, r$, is block-sparse (see~\cite{Nest12-1} for some related discussion).

Thirdly, observe that the above SBMD method is conceptual only because we have
not yet specified the selection of the stepsizes $\{\gamma_k\}$,
the weights $\{\theta_k\}$, and the probabilities $\{p_i\}$.
We will specify these parameters after establishing some
basic convergence properties of this method.

\subsection{Convergence properties of SBMD for nonsmooth problems} \label{sec_nonsmooth_ns}
In this subsection, we discuss the main convergence properties of the SBMD
method for solving general nonsmooth convex problems.

\begin{theorem} \label{theorem_nonsmooth}
Let $\bar x_N$ be the output of the SBMD algorithm and suppose that
\beq \label{equal_theta_gamma}
\theta_k = \gamma_k, \ \ k = 1, \ldots, N.
\eeq
Then we have,
for any $N \ge 1$ and $x \in X$,
\beq \label{nonsmooth_result}
\bbe[f(\bar x_N) - f(x)]
\le \left( {\sum\limits_{k = 1}^N {\gamma _k } } \right)^{ - 1} \left [
\sum \limits_{i=1}^b p_i^{-1}V_i(x_1^{(i)}, x^{(i)})
+ \frac{1}{2}\sum\limits_{k = 1}^N {\gamma _k^2 } \sum\limits_{i = 1}^b {M_i^2}\right ] ,
\eeq
where the expectation is taken with respect to (w.r.t.) $\{i_k\}$ and $\{\xi_k\}$.
\end{theorem}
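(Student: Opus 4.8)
The plan is to establish a one-step (per-iteration) inequality relating the progress of the prox-mapping in block $i_k$ to the function-value gap, then take conditional expectations over the random index $i_k$ and the sample $\xi_k$, and finally telescope over $k=1,\dots,N$ while invoking convexity of $f$ and the averaging identity \eqnok{def_bar_x}.

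First I would recall the standard prox-mapping estimate: since $x_{k+1}^{(i_k)} = P_{i_k}(x_k^{(i_k)}, G_{i_k}(x_k,\xi_k),\gamma_k)$, the optimality condition for \eqnok{def_prox_mapping} together with the three-point property of the Bregman distance $V_{i_k}$ gives, for every $u \in X_{i_k}$,
\beq
\gamma_k \langle G_{i_k}(x_k,\xi_k), x_k^{(i_k)} - u\rangle
\le V_{i_k}(x_k^{(i_k)}, u) - V_{i_k}(x_{k+1}^{(i_k)}, u)
+ \frac{\gamma_k^2}{2}\|G_{i_k}(x_k,\xi_k)\|_{i_k,*}^2,
\nn
\eeq
using strong convexity (modulus $1$) of $\w_{i_k}$ to absorb the cross term. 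For the blocks $i\ne i_k$ nothing changes, so $V_i(x_{k+1}^{(i)},x^{(i)}) = V_i(x_k^{(i)},x^{(i)})$. The key bookkeeping step is to combine these: writing $\Delta_k(x) := \sum_{i=1}^b p_i^{-1} V_i(x_k^{(i)},x^{(i)})$, the $p_i^{-1}$ weights are chosen precisely so that taking the conditional expectation over $i_k$ (which equals $i$ with probability $p_i$) of the single active term $p_{i_k}^{-1}[V_{i_k}(x_k^{(i_k)},x^{(i_k)}) - V_{i_k}(x_{k+1}^{(i_k)},x^{(i_k)})]$ reproduces $\Delta_k(x) - \bbe_{i_k}[\Delta_{k+1}(x)]$, i.e.\ a clean telescoping quantity.

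Next I would handle the stochastic gradient. Taking expectation over $\xi_k$ conditional on $x_k$ and $i_k$, the left-hand side produces $\gamma_k\langle g_{i_k}(x_k), x_k^{(i_k)} - x^{(i_k)}\rangle$ by \eqnok{assump1}; then taking expectation over $i_k$ (again weighting block $i$ by $p_i$, which cancels a $p_i^{-1}$ one had carried along, or rather: one multiplies the $i_k$-block inequality by $p_{i_k}^{-1}$ before averaging) yields $\gamma_k \langle g(x_k), x_k - x\rangle = \sum_i \gamma_k\langle g_i(x_k), x_k^{(i)}-x^{(i)}\rangle \ge \gamma_k[f(x_k) - f(x)]$ by convexity and $g(x_k)\in\partial f(x_k)$. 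Meanwhile the noise term becomes $\tfrac{\gamma_k^2}{2}\bbe[\|G_{i_k}(x_k,\xi_k)\|_{i_k,*}^2]$, and after averaging over $i_k$ this is bounded by $\tfrac{\gamma_k^2}{2}\sum_{i=1}^b M_i^2$ via \eqnok{assump1.a}. Summing over $k=1,\dots,N$, the Bregman terms telescope down to $\sum_i p_i^{-1}V_i(x_1^{(i)},x^{(i)})$ (dropping the nonnegative final term $\sum_i p_i^{-1}V_i(x_{N+1}^{(i)},x^{(i)})$), giving
\beq
\sum_{k=1}^N \gamma_k\, \bbe[f(x_k) - f(x)]
\le \sum_{i=1}^b p_i^{-1} V_i(x_1^{(i)},x^{(i)})
+ \frac{1}{2}\sum_{k=1}^N \gamma_k^2 \sum_{i=1}^b M_i^2.
\nn
\eeq
Finally, dividing by $\sum_k\gamma_k$ and using Jensen's inequality with \eqnok{def_bar_x} — $f(\bar x_N) \le (\sum_k\gamma_k)^{-1}\sum_k\gamma_k f(x_k)$ — together with the hypothesis $\theta_k = \gamma_k$ yields \eqnok{nonsmooth_result}.

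The main obstacle I anticipate is the careful conditioning argument: one must be precise about the filtration (the $\sigma$-algebra generated by $x_1,i_1,\xi_1,\dots,i_{k-1},\xi_{k-1}$, hence by $x_k$) and about the order of the two expectations, and one must verify that the $p_i^{-1}$ reweighting makes both the descent term and the Bregman-telescoping term come out correctly simultaneously — it is easy to mismatch a factor of $p_{i_k}$ somewhere. The rest is the routine prox-mapping inequality plus telescoping, but getting the randomized block-selection algebra exactly right is where care is needed.
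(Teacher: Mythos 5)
Your proposal is correct and follows essentially the same route as the paper's proof: the same prox-mapping inequality for the active block, the same weighted potential $V(z,x)=\sum_i p_i^{-1}V_i(z^{(i)},x^{(i)})$ whose increment involves only block $i_k$, the same conditional-expectation identities turning $p_{i_k}^{-1}U_{i_k}G_{i_k}$ into $g(x_k)$ and $p_{i_k}^{-1}\|G_{i_k}\|_{i_k,*}^2$ into $\sum_i M_i^2$, and the same telescoping plus Jensen step via \eqnok{def_bar_x}. The only cosmetic difference is that the paper sums the pathwise recursion first and takes expectation at the end, whereas you condition step by step; the two are equivalent.
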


\begin{proof}
For simplicity, let us denote $V_i(z, x) \equiv V_i(z^{(i)}, x^{(i)})$, $g_{i_k} \equiv g^{(i_k)}(x_k)$ (c.f. \eqnok{assump1})
and $V(z,x)=\sum \limits_{i=1}^b p_i^{-1}V_i(z, x)$.
Also let us denote $\zeta_k = (i_k, \xi_k)$ and $\zeta_{[k]} = (\zeta_1, \ldots, \zeta_k)$.
By the optimality condition of \eqnok{def_prox_mapping} (e.g., Lemma 1 of \cite{NJLS09-1}) and the definition of
$x_k^{(i)}$ in \eqnok{eqn_prox}, we have
\[
V_{i_k } (x_{k + 1}, x ) \le {V_{i_k } (x_k ,x) +
  \gamma _k \left\langle {G_{i_k }(x_k, \xi_k), U_{i_k }^T (x - x_k )} \right\rangle
  + \frac{1}{2} \gamma _k^2
   \left\| {G_{i_k } (x_k ,\xi _k )} \right\|_{i_k ,*}^2  } .
\]
Using this observation, we have, for any $k \ge 1$ and $x \in X$,
\beq \label{eqn_nonsmooth}
\begin{array}{l}
 V(x_{k + 1} ,x) = \sum\limits_{i \ne i_k } {p_i^{ - 1} V_i (x_k,x)}
 + p_{i_k }^{ - 1} V_{i_k } (x_{k + 1}, x ) \\
  \le \sum\limits_{i \ne i_k } {p_i^{ - 1} V_i (x_k, x)}  +
  p_{i_k }^{ - 1} \left[ {V_{i_k } (x_k ,x) +
  \gamma _k \left\langle {G_{i_k }(x_k, \xi_k), U_{i_k }^T (x - x_k )} \right\rangle
  + \frac{1}{2} \gamma _k^2
   \left\| {G_{i_k } (x_k ,\xi _k )} \right\|_{i_k ,*}^2  } \right] \\
 = V(x_k, x) + \gamma _k p_{i_k }^{ - 1} \left\langle {U_{i_k } G_{i_k }(x_k, \xi_k), x - x_k } \right\rangle
  + \frac{1}{2}  \gamma _k^2
   p_{i_k }^{ - 1}   \left\| {G_{i_k } (x_k ,\xi _k )} \right\|_{i_k ,*}^2 \\
= V(x_k, x) + \gamma_k \langle g(x_k), x- x_k \rangle  + \gamma_k \delta_k  + \frac{1}{2} \gamma_k^2 \bar \delta_k,
 \end{array}
\eeq
where
\beq \label{def_delta}
\delta_k := \langle p_{i_k }^{ - 1}
{U_{i_k } G_{i_k }(x_k, \xi_k) - g(x_k), x - x_k } \rangle
\ \ \mbox{and} \ \
\bar \delta_k := p_{i_k }^{ - 1}   \left\| {G_{i_k } (x_k ,\xi _k )} \right\|_{i_k ,*}^2.
\eeq
It then follows from \eqnok{eqn_nonsmooth}
and the convexity of $f(\cdot)$ that, for any $k \ge 1$ and $x \in X$,
\[
\gamma_k [f(x_k) - f(x)] \le
\gamma_k \langle g(x_k), x_k - x\rangle \le V(x_k, x) - V(x_{k+1},x) +
\gamma_k \delta_k + \frac{1}{2}  \gamma_k^2 \bar \delta_k.
\]
By using the above inequalities, the convexity of $f(\cdot)$, and
the fact that
$\bar x_N = \sum_{k = 1}^N (\gamma _k x_k) / \sum_{k = 1}^N \gamma _k$ due to \eqnok{def_bar_x} and \eqnok{equal_theta_gamma},
we conclude that for any $N \ge 1$ and $x \in X$,
\beq \label{main_rec}
\begin{array}{lll}
f(\bar x_N) - f(x) &\le& \left(\sum\limits_{k = 1}^N \gamma _k\right)^{-1}\sum_{k=1}^N \gamma_k \left[f(x_k) - f(x)\right]\\
&\le& \left(\sum\limits_{k = 1}^N \gamma _k\right)^{-1} \left[V(x_1, x)
+ \sum\limits_{k=1}^N \left(\gamma_k \delta_k  + \, \frac{1}{2}  \gamma _k^2 \bar{\delta}_k \right)\right].
\end{array}
\eeq
Now, observe that by \eqnok{assump1} and \eqnok{eqn_prob},
\[
\begin{array}{lll}
\bbe_{\zeta_k}\left[p_{i_k }^{ - 1} \langle U_{i_k } G_{i_k }, x - x_k \rangle| \zeta_{[k-1]}\right]
&=&  \sum_{i=1}^b \bbe_{\xi_k} \left[\langle U_{i} G_i(x_k, \xi_k), x - x_k \rangle | \zeta_{[k-1]}\right] \\
&=& \sum_{i=1}^b \langle U_i g_i(x_k), x - x_k \rangle = \langle g(x_k), x - x_k \rangle,
\end{array}
\]
and hence that
\beq \label{zero-diff}
\bbe[\delta_k| \zeta_{k-1}] = 0.
\eeq
Also, by \eqnok{eqn_prob} and \eqnok{assump1.a},
\beq \label{bnd_M}
\bbe\left[p_{i_k }^{ - 1}  \left\| {G_{i_k } (x_k ,\xi _k )} \right\|_{i_k ,*}^2\right]
= \sum_{i=1}^b p_i p_i^{-1}  \left\| {G_{i } (x_k ,\xi _k )} \right\|_{i ,*}^2
\le \sum_{i=1}^b  M_i^2.
\eeq
Our result in \eqnok{nonsmooth_result} then immediately follows by taking expectation on
both sides of \eqnok{main_rec}, and using the previous observations in \eqnok{zero-diff} and \eqnok{bnd_M}.
\end{proof}

\vgap

Below we provide a few specialized convergence results for the SBMD algorithm after properly selecting
$\{p_i\}$, $\{\gamma_k\}$, and $\{\theta_k\}$.

\begin{corollary} \label{general2}
Suppose that $\{\theta_k\}$ in Algorithm~\ref{algSBMD} are set to \eqnok{equal_theta_gamma}.
\begin{itemize}
\item [a)] If $X$ is bounded, and $\{p_i\}$ and $\{\gamma_k\}$ are set to
\beq \label{def_nonsmooth_prob}
p_i = \frac{{\sqrt {\cD_i} }}{{\sum\limits_{i = 1}^b {\sqrt {\cD_i} } }}, \ \ i = 1, \ldots, b,  \ \ \mbox{and} \ \
\gamma_k = \gamma \equiv \frac{\sqrt{2} \sum \limits_{i=1}^b \sqrt{\cD_i}}{ \sqrt{N \sum\limits_{i=1}^b
M_i^2}}, \ \ k = 1, \ldots, N,
\eeq
then
\beq \label{nonsmooth_key_result}
\bbe[f(\bar x_N ) - f(x)] \le  \sqrt{\frac{2}{N}} \sum \limits_{i=1}^b \sqrt{  {\cal D}_i} \, \sqrt{\sum \limits_{i=1}^b  M_i^2} \ \
\forall x \in X.
\eeq
\item [b)] If $\{p_i\}$ and $\{\gamma_k\}$ are set to
\beq \label{def_nonsmooth_prob2}
p_i = \frac{1 }{b}, \ \ i = 1, \ldots, b,  \ \ \mbox{and} \ \
\gamma_k = \gamma \equiv \frac{\sqrt{2 b}  \tilde D}{ \sqrt{N \sum\limits_{i=1}^b   M_i^2}}, \ \ k = 1, \ldots, N,
\eeq
for some $\tilde D > 0$,
then
\beq \label{nonsmooth_key_result_u}
\bbe[f(\bar x_N ) - f(x)] \le  \sqrt{\sum \limits_{i=1}^b  M_i^2}
\left( \frac{\sum_{i=1}^b V_i (x_1^{(i)} ,x_*^{(i)} )}{\tilde D} +\tilde D \right) \frac{\sqrt{b}}{\sqrt{2N}} \ \
\forall x \in X.
\eeq
\end{itemize}
\end{corollary}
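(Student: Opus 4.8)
The plan is to take the master estimate \eqnok{nonsmooth_result} of Theorem~\ref{theorem_nonsmooth}, which is already in force since \eqnok{equal_theta_gamma} is assumed, and specialize it by plugging in the prescribed parameters. Both parts of the corollary use a constant stepsize $\gamma_k\equiv\gamma$, so $\sum_{k=1}^N\gamma_k=N\gamma$ and $\sum_{k=1}^N\gamma_k^2=N\gamma^2$, and \eqnok{nonsmooth_result} collapses to
\[
\bbe[f(\bar x_N)-f(x)]\ \le\ \frac{1}{N\gamma}\sum_{i=1}^b p_i^{-1}V_i(x_1^{(i)},x^{(i)})\ +\ \frac{\gamma}{2}\sum_{i=1}^b M_i^2 .
\]
So each part reduces to bounding $\sum_{i=1}^b p_i^{-1}V_i(x_1^{(i)},x^{(i)})$ for the given $\{p_i\}$ and then checking that the prescribed $\gamma$ balances the two resulting terms.

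For part (a), since $X$ is bounded I would invoke \eqnok{bound_V_i} to get $V_i(x_1^{(i)},x^{(i)})\le\cD_i$. With $p_i=\sqrt{\cD_i}/\sum_{j}\sqrt{\cD_j}$ this gives $p_i^{-1}V_i(x_1^{(i)},x^{(i)})\le\sqrt{\cD_i}\,\sum_{j}\sqrt{\cD_j}$, hence $\sum_i p_i^{-1}V_i(x_1^{(i)},x^{(i)})\le\big(\sum_i\sqrt{\cD_i}\big)^2$; this choice of $\{p_i\}$ is in fact the minimizer of $\sum_i p_i^{-1}\cD_i$ over the simplex, i.e.\ the case of equality in Cauchy--Schwarz $\big(\sum_i\sqrt{\cD_i}\big)^2\le\big(\sum_i\cD_i/p_i\big)\big(\sum_i p_i\big)$. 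Writing $A:=\sum_i\sqrt{\cD_i}$ and $B:=\sum_i M_i^2$, the displayed bound becomes $A^2/(N\gamma)+\gamma B/2$; with $\gamma=\sqrt{2}\,A/\sqrt{NB}$ each of the two terms equals $A\sqrt{B}/\sqrt{2N}$, so their sum is $\sqrt{2/N}\,A\sqrt{B}$, which is \eqnok{nonsmooth_key_result}.

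For part (b) no boundedness is needed: with $p_i=1/b$ one simply has $\sum_i p_i^{-1}V_i(x_1^{(i)},x^{(i)})=b\sum_i V_i(x_1^{(i)},x^{(i)})$. Writing $V:=\sum_i V_i(x_1^{(i)},x^{(i)})$ and keeping $B$ as above, the bound reads $bV/(N\gamma)+\gamma B/2$; substituting $\gamma=\sqrt{2b}\,\tilde D/\sqrt{NB}$ makes the first term $\sqrt{B}\,(V/\tilde D)\,\sqrt{b}/\sqrt{2N}$ and the second $\sqrt{B}\,\tilde D\,\sqrt{b}/\sqrt{2N}$, whose sum is exactly the right-hand side of \eqnok{nonsmooth_key_result_u} (with $x_*^{(i)}$ there read as the same reference point $x^{(i)}$ on the left).

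I do not expect a real obstacle; the argument is entirely algebraic bookkeeping of constants. The only two points worth isolating are the rationale behind the parameter choices: $p_i\propto\sqrt{\cD_i}$ is picked because it minimizes the leading $\sum_i p_i^{-1}\cD_i$ term over the probability simplex, and $\gamma$ is picked to equalize the two terms of the form $c_1/(N\gamma)+c_2\gamma$, whose minimizer is $\gamma=\sqrt{c_1/(Nc_2)}$. Everything else is substitution and collecting constants.
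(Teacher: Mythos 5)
Your proposal is correct and follows essentially the same route as the paper: specialize \eqnok{nonsmooth_result} with the constant stepsize, bound $\sum_i p_i^{-1}V_i(x_1^{(i)},x^{(i)})$ via \eqnok{bound_V_i} and the choice of $\{p_i\}$ to get $\bigl(\sum_i\sqrt{\cD_i}\bigr)^2$ in part (a), and substitute the prescribed $\gamma$ (the paper only works out part (a) explicitly and notes (b) is analogous, which you carry out in full). Your reading of $x_*^{(i)}$ in \eqnok{nonsmooth_key_result_u} as the generic reference point $x^{(i)}$ is the right way to resolve that typographical inconsistency.
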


\begin{proof} We show part a) only, since part b) can be proved similarly.
Note that by
\eqnok{bound_V_i} and \eqnok{def_nonsmooth_prob}, we have
\[
\sum \limits_{i=1}^b p_i^{-1}V_i(x_1^{(i)}, x^{(i)}) \le \sum \limits_{i=1}^b p_i^{-1} \cD_i
\le
 \left( \sum\limits_{i = 1}^b \sqrt {\cD_i}  \right)^2.
\]
Using this observation, \eqnok{nonsmooth_result}, and \eqnok{def_nonsmooth_prob}, we have
\[
\bbe[f(\bar x_N ) - f(x_*)] \le (N \gamma)^{-1} \left[ \left( \sum\limits_{i = 1}^b \sqrt {\cD_i}  \right)^2
+ \frac{N \gamma^2}{2} \sum_{i=1}^b M_i^2\right]
 = \sqrt{\frac{2}{N}} \sum \limits_{i=1}^b \sqrt{  \cD_i} \, \sqrt{\sum \limits_{i=1}^b  M_i^2} .
\]
\end{proof}

\vgap

A few remarks about the results obtained in Theorem~\ref{theorem_nonsmooth}
and Corollary~\ref{general2} are in place.
First, the parameter setting in \eqnok{def_nonsmooth_prob} only works for
the case when $X$ is bounded, while the one in \eqnok{def_nonsmooth_prob2}
also applies to the case when $X$ is unbounded or when the bounds
$\cD_i$, $i =1, \ldots, b$, are not available.
It can be easily seen that the optimal choice of $\tilde D$ in \eqnok{nonsmooth_key_result_u}
would be $\sqrt{\sum_{i=1}^b V_i (x_1^{(i)} ,x_*^{(i)})}$.
In this case, \eqnok{nonsmooth_key_result_u} reduces
to
\beq \label{nonsmooth_bnd_simple}
\bbe[f(\bar x_N ) - f(x)] \le  \sqrt{2 \sum \limits_{i=1}^b  M_i^2}  \sqrt{\sum_{i=1}^b V_i (x_1^{(i)} ,x_*^{(i)} )}
\frac{\sqrt{b}}{\sqrt{N}} \le \sqrt{2 \sum \limits_{i=1}^b  M_i^2}  \sqrt{\sum_{i=1}^b {\cal D}_i}
\frac{\sqrt{b}}{\sqrt{N}},
\eeq
where the second inequality follows from \eqnok{bound_V_i}.
It is interesting to note the difference
between the above bound and \eqnok{nonsmooth_key_result}. Specifically,
the bound obtained in \eqnok{nonsmooth_key_result} by using a non-uniform
distribution $\{p_i\}$ always minorizes the one in \eqnok{nonsmooth_bnd_simple}
by the Cauchy-Schwartz inequality.

Second, observe that in view of \eqnok{nonsmooth_key_result},
the total number of iterations required by the SBMD method to find
an $\epsilon$-solution of \eqnok{sp} can be bounded by
\beq \label{iter_SBMD}
2 \left( \sum_{i=1}^b \sqrt{\cD_i} \right)^2 \left(\sum_{i=1}^b M_i^2\right)
\frac{1}{\epsilon^2}.
\eeq
Also note that the iteration complexity of the mirror-descent SA algorithm employed with
the same $\w_i(\cdot)$, $i = 1, \ldots, b$, is given by
\beq \label{iter_MDSA}
2 \sum \limits_{i=1}^b {\cal D}_i \, \left(\sum \limits_{i=1}^b  M_i^2\right) \, \,
\frac{1}{\epsilon^2}.
\eeq
Clearly, the bound in \eqnok{iter_SBMD} can be larger, up to
a factor of $b$, than the one in \eqnok{iter_MDSA}. Therefore, the total arithmetic cost
of the SBMD algorithm will be comparable to or smaller than that of the mirror descent SA,
if its iteration cost is smaller than that of the latter algorithm by a factor of ${\cal O} (b)$.

Third, in Corollary~\ref{general2} we have used a constant stepsize policy
where $\gamma_1 = \ldots = \gamma_N$. However, it should be noted that
variable stepsize policies, e.g., those similar to \cite{NJLS09-1}, can also be used
in the SBMD method.

\subsection{Convergence properties of SBMD for nonsmooth strongly convex problems} \label{sec_nonsmooth_s}

In this subsection, we assume that the objective function $f(\cdot)$ in \eqnok{sp} is strongly convex, i.e., $\exists$ $\mu >0$ s.t.
\beq \label{def_strong_convexity}
f(y) \ge f(x) + \langle  g(x), y-x\rangle +
\frac{\mu}{2} \| y-x \|^2 \ \ \forall \, x,y \in X.
\eeq
In order to establish the convergence of the SBMD algorithm for solving strongly convex problems,
we need to assume that the prox-functions
$V_i(\cdot,\cdot)$, $i =1, \ldots, b$, satisfy a quadratic growth condition (e.g.,~\cite{jn10-1,GhaLan12-2a,GhaLan13-1}):
\beq \label{q-growth}
V_i(\zi,\xtoi) \le \frac{Q}{2}\|\zi - \xtoi\|_i^2 \ \ \forall \zi, \xtoi \in X_i,
\eeq
for some $Q > 0$. In addition, we need to assume that the probability distribution of $i_k$ is uniform, i.e.,
\beq \label{uniform}
p_1 = p_2 = \ldots = p_b = \frac{1}{b}.
\eeq

Before proving the convergence of the SBMD algorithm
for solving strongly convex problems,
we first state a simple technical result obtained
by slightly modifying Lemma 3 of \cite{Lan13-2}.

\begin{lemma} \label{tech_result_sum} Let $a_k\in (0,1]$, $k = 1, 2, \ldots$, be given.
Also let us denote
\beq \label{def_Gamma0}
A_{k}:= \left\{
\begin{array}{ll}
1& k = 1\\
(1-a_k) \, A_{k-1}& k \ge 2.
\end{array}
\right.
\eeq
Suppose that $A_k > 0$ for all $k \ge 2$ and that the sequence $\{\Delta_k\}$ satisfies
\beq \label{general_cond}
\Delta_{k+1} \le (1 - a_k) \Delta_{k} + B_k, \ \ \ k = 1, 2, \ldots.
\eeq
Then, we have
$
\Delta_{k+1}/A_k\le (1-a_1)\Delta_1 + \sum_{i=1}^k (B_i/A_i).
$
\end{lemma}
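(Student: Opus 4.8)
The plan is to divide the recursion \eqnok{general_cond} by $A_k$ and then telescope. The crucial observation is that the defining relation \eqnok{def_Gamma0} gives, for every $k \ge 2$, that $(1-a_k)/A_k = 1/A_{k-1}$, since $A_k = (1-a_k)A_{k-1}$ and $A_{k-1}>0$. Dividing \eqnok{general_cond} through by $A_k$ — legitimate because $A_1 = 1$ and $A_k > 0$ for $k \ge 2$ by hypothesis — therefore yields, for every $k \ge 2$,
\[
\frac{\Delta_{k+1}}{A_k} \le \frac{\Delta_k}{A_{k-1}} + \frac{B_k}{A_k},
\]
whereas for $k=1$ it yields $\Delta_2/A_1 \le (1-a_1)\Delta_1 + B_1/A_1$ (using $A_1=1$).

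Next I would sum the displayed inequality over $i = 2,\ldots,k$; the left-hand terms telescope, giving $\Delta_{k+1}/A_k \le \Delta_2/A_1 + \sum_{i=2}^k B_i/A_i$. Inserting the $k=1$ bound for $\Delta_2/A_1$ and merging the leftover $B_1/A_1$ into the sum produces exactly $\Delta_{k+1}/A_k \le (1-a_1)\Delta_1 + \sum_{i=1}^k B_i/A_i$, which is the assertion. Equivalently, the same per-step inequality drives a one-line induction on $k$, with the $k=1$ case serving as the base.

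There is no genuinely hard step here; the only point to check is that the division by $A_k$ is permissible, which is precisely what the standing hypothesis $A_k>0$ guarantees (together with $A_1=1$). It is worth noting in passing that $a_k\in(0,1]$ already forces $A_k \ge 0$, and $A_k>0$ fails only when some $a_i=1$ for $i\le k$, so the positivity assumption is the natural one. This is Lemma 3 of \cite{Lan13-2} up to a relabelling of the auxiliary sequence, and the proof above is the standard argument.
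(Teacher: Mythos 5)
Your proof is correct: dividing the recursion by $A_k$, using $(1-a_k)/A_k = 1/A_{k-1}$ for $k\ge 2$, and telescoping (with the $k=1$ case handled separately via $A_1=1$) gives exactly the stated bound, and the positivity hypothesis is used precisely where needed. The paper itself omits the proof and simply cites Lemma 3 of \cite{Lan13-2}; your argument is the standard one behind that result, so there is nothing to reconcile.
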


\vgap

We are now ready to describe the main convergence properties
of the SBMD algorithm for solving nonsmooth strongly convex problems.

\begin{theorem} \label{theorem_strongly}
Suppose that \eqnok{def_strong_convexity},
\eqnok{q-growth}, and \eqnok{uniform} hold. If
\beq \label{def_theta_strong0}
\gamma_k \le \frac{bQ}{\mu}
\eeq and
\beq \label{def_theta_strong}
\theta_k = \frac{\gamma_k}{\Gamma_k} \ \ \
\mbox{with} \ \ \
\Gamma_k = \begin{cases}
1 & k = 1\\
\Gamma_{k-1} (1 - \frac{\gamma_k \mu}{bQ}) & k \ge 2,
\end{cases}
\eeq
then,
for any $N \ge 1$ and $x \in X$, we have
\beq    \label{result_strong}
\bbe [f(\bar x_N) - f(x)] \le \left(\sum_{k=1}^N \theta_k\right)^{-1}
\left[( b - \frac{\gamma_1 \mu}{ Q})\sum \limits_{i=1}^b V_i(x_1^{(i)}, x^{(i)})
+ \frac{1}{2}  \sum_{k=1}^N \gamma_k \theta_k \sum_{i=1}^b M_i^2\right].
\eeq
\end{theorem}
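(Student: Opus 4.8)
The plan is to recycle the one-step estimate from the proof of Theorem~\ref{theorem_nonsmooth} and sharpen it using strong convexity together with the quadratic-growth hypothesis. I keep the notation from that proof: $V_i(z,x)\equiv V_i(z^{(i)},x^{(i)})$, $V(z,x)=\sum_{i=1}^b p_i^{-1}V_i(z,x)$, $\delta_k$, $\bar\delta_k$ as in \eqnok{def_delta}, and $\zeta_k=(i_k,\xi_k)$. Under the uniform choice \eqnok{uniform} we have $V(z,x)=b\sum_{i=1}^b V_i(z,x)$. The identity \eqnok{eqn_nonsmooth}, namely
\[
V(x_{k+1},x)=V(x_k,x)+\gamma_k\langle g(x_k),x-x_k\rangle+\gamma_k\delta_k+\tfrac12\gamma_k^2\bar\delta_k,
\]
holds verbatim here since its derivation used only the prox-mapping optimality condition and \eqnok{assump1}. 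The new input is that instead of plain convexity I invoke \eqnok{def_strong_convexity} in the form $\langle g(x_k),x-x_k\rangle\le f(x)-f(x_k)-\tfrac\mu2\|x-x_k\|^2$, and then bound $\|x_k-x\|^2$ from below by $V(x_k,x)$: summing \eqnok{q-growth} over $i$ and using \eqnok{uniform} gives $V(x_k,x)=b\sum_i V_i(x_k,x)\le\tfrac{bQ}{2}\|x_k-x\|^2$, i.e.\ $\|x_k-x\|^2\ge\tfrac{2}{bQ}V(x_k,x)$. Substituting these two facts into the identity yields the contraction recursion
\[
V(x_{k+1},x)\le\Bigl(1-\tfrac{\gamma_k\mu}{bQ}\Bigr)V(x_k,x)+\gamma_k\bigl(f(x)-f(x_k)\bigr)+\gamma_k\delta_k+\tfrac12\gamma_k^2\bar\delta_k .
\]

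Next I feed this into Lemma~\ref{tech_result_sum} with $a_k=\gamma_k\mu/(bQ)$, $A_k=\Gamma_k$ (as in \eqnok{def_theta_strong}), $\Delta_k=V(x_k,x)$, and $B_k=\gamma_k(f(x)-f(x_k))+\gamma_k\delta_k+\tfrac12\gamma_k^2\bar\delta_k$; the stepsize restriction \eqnok{def_theta_strong0} is exactly what guarantees $a_k\in(0,1]$ so that the lemma applies (and keeps $\Gamma_k>0$). The lemma gives $V(x_{N+1},x)/\Gamma_N\le(1-a_1)V(x_1,x)+\sum_{k=1}^N B_k/\Gamma_k$. Dropping the nonnegative left-hand side, using $\theta_k=\gamma_k/\Gamma_k$ (hence $\gamma_k/\Gamma_k=\theta_k$ and $\gamma_k^2/\Gamma_k=\gamma_k\theta_k$), and rearranging produces
\[
\sum_{k=1}^N\theta_k\bigl(f(x_k)-f(x)\bigr)\le(1-a_1)V(x_1,x)+\sum_{k=1}^N\theta_k\delta_k+\tfrac12\sum_{k=1}^N\gamma_k\theta_k\bar\delta_k .
\]
Convexity of $f$ together with the averaging identity \eqnok{def_bar_x} (with the weights $\theta_k$) gives $f(\bar x_N)-f(x)\le(\sum_k\theta_k)^{-1}\sum_k\theta_k(f(x_k)-f(x))$, and taking expectations and invoking \eqnok{zero-diff} (so $\bbe[\theta_k\delta_k]=0$, as $\theta_k$ is deterministic) and \eqnok{bnd_M} ($\bbe[\bar\delta_k]\le\sum_i M_i^2$) leaves $\bbe[f(\bar x_N)-f(x)]\le(\sum_k\theta_k)^{-1}[(1-a_1)V(x_1,x)+\tfrac12\sum_k\gamma_k\theta_k\sum_i M_i^2]$. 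Finally, substituting $1-a_1=1-\gamma_1\mu/(bQ)$ and $V(x_1,x)=b\sum_i V_i(x_1^{(i)},x^{(i)})$ turns $(1-a_1)V(x_1,x)$ into $(b-\gamma_1\mu/Q)\sum_i V_i(x_1^{(i)},x^{(i)})$, which is precisely \eqnok{result_strong}.

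The only genuinely new step relative to Theorem~\ref{theorem_nonsmooth} is the passage from $\|x_k-x\|^2$ to $V(x_k,x)$: this is where both \eqnok{q-growth} and the uniform distribution \eqnok{uniform} are essential, and it is what converts the $-\tfrac\mu2\|x-x_k\|^2$ gain from strong convexity into a genuine geometric contraction factor $1-\gamma_k\mu/(bQ)$ on the potential $V(x_k,x)$; everything else is bookkeeping — matching the recursion to the hypotheses of Lemma~\ref{tech_result_sum} and keeping track of $\theta_k=\gamma_k/\Gamma_k$ so that the telescoped sum reassembles into $\sum_k\gamma_k\theta_k$ in the noise term. A minor point to watch is the degenerate case in which \eqnok{def_theta_strong0} holds with equality for some $k\ge2$, which would force $\Gamma_k=0$ and make $\theta_k$ undefined; this is precisely what the positivity requirement on $A_k$ in Lemma~\ref{tech_result_sum} rules out, so one works with the strict inequality there.
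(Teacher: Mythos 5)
Your proposal is correct and follows essentially the same route as the paper: the one-step inequality \eqnok{eqn_nonsmooth}, strong convexity, the quadratic-growth bound $V(x_k,x)\le \tfrac{bQ}{2}\|x_k-x\|^2$ to obtain the contraction factor $1-\gamma_k\mu/(bQ)$, Lemma~\ref{tech_result_sum}, and then the expectation bounds \eqnok{zero-diff} and \eqnok{bnd_M}. (Two cosmetic remarks: \eqnok{eqn_nonsmooth} is an inequality rather than an identity, and your reading $a_k=\gamma_k\mu/(bQ)$ is the intended instantiation of the lemma.)
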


\begin{proof}
For simplicity, let us denote $V_i(z, x) \equiv V_i(z^{(i)}, x^{(i)})$, $g_{i_k} \equiv g^{(i_k)}(x_k)$,
and $V(z,x)=\sum \limits_{i=1}^b p_i^{-1}V_i(z, x)$.
Also let us denote $\zeta_k = (i_k, \xi_k)$ and $\zeta_{[k]} = (\zeta_1, \ldots, \zeta_k)$,
and let $\delta_k$ and $\bar \delta_k$ be defined in \eqnok{def_delta}.
By \eqnok{q-growth} and \eqnok{uniform}, we have
\beq \label{q-growth-1}
V(z,x) = b \sum_{i=1}^b V_i(\zi,\xtoi) \le
\frac{bQ}{2} \sum_{i=1}^b \|\zi-\xtoi\|_i^2 = \frac{bQ}{2} \|z - x\|^2.
\eeq
Using this observation, \eqnok{eqn_nonsmooth}, and \eqnok{def_strong_convexity}, we obtain
\begin{align}
V(x_{k+1}, x) & \le  V(x_k, x) + \gamma_k \langle g(x_k), x- x_k \rangle + \gamma_k \delta_k
+ \frac{1}{2}  \gamma _k^2 \bar \delta_k \nn\\
& \le  V(x_k, x) + \gamma_k
\left[f(x) - f(x_k) - \frac{\mu}{2} \|x- x_k\|^2 \right] + \gamma_k \delta_k
+ \frac{1}{2}  \gamma _k^2 \bar \delta_k \nn\\
& \le \left(1- \frac{\gamma_k \mu}{bQ} \right) V(x_k, x) + \gamma_k
\left[f(x) - f(x_k) \right] + \gamma_k \delta_k + \frac{1}{2}  \gamma _k^2 \bar \delta_k,\nn
\end{align}
which, in view of Lemma~\ref{tech_result_sum} (with
$a_k = 1 - \gamma_k \mu / (bQ)$ and $A_k = \Gamma_k$),
then implies that
\beq \label{strong_main_rec}
\frac{1}{\Gamma_N}V(x_{N+1}, x) \le \left(1 - \frac{\gamma_1 \mu}{b Q}\right) V(x_1,x)
+\sum_{k=1}^N \Gamma_k^{-1}\gamma_k
\left[f(x) - f(x_k) + \delta_k  + \frac{1}{2}  \gamma _k^2 \bar \delta_k \right].
\eeq
Using the fact that $V(x_{N+1},x) \ge 0$ and \eqnok{def_theta_strong}, we conclude
from the above relation that
\beq \label{result_key_strong}
\sum_{k=1}^N \theta_k [f(x_k) - f(x)] \le \left( 1 - \frac{\gamma_1 \mu}{bQ}\right)V(x_1,x)
+\sum_{k=1}^N \theta_k \delta_k
+ \frac{1}{2}  \sum_{k=1}^N \gamma_k \theta_k \bar \delta_k.
\eeq
Taking expectation on both sides of the above inequality,
and using relations \eqnok{zero-diff} and \eqnok{bnd_M},
we obtain
\[
\sum_{k=1}^N \theta_k \bbe [f(x_k) - f(x)] \le \left( 1 - \frac{\gamma_1 \mu}{b Q}\right)V(x_1,x)
+ \frac{1}{2}  \sum_{k=1}^N \gamma_k \theta_k \sum_{i=1}^b
M_i^2,
\]
which, in view of \eqnok{def_bar_x}, \eqnok{uniform} and the convexity of $f(\cdot)$, then clearly implies \eqnok{result_strong}.
\end{proof}

\vgap

Below we provide a specialized convergence result
for the SBMD method  to solve nonsmooth strongly convex problems
after properly selecting $\{\gamma_k\}$.

\begin{corollary} \label{co_stronglyconvex}
Suppose that \eqnok{def_strong_convexity}, \eqnok{q-growth}
and \eqnok{uniform} hold. If $\{\theta_k\}$ are set to \eqnok{def_theta_strong}
and $\{\gamma_k\}$ are set to
\beq \label{set_gamma_strong}
\gamma_k = \frac{2bQ}{\mu (k+1)}, \ \ k = 1, \ldots, N,
\eeq
then, for any $N \ge 1$ and $x \in X$, we have
\beq \label{result_strong1}
\bbe [f(\bar x_N) - f(x)] \le \frac{2bQ}{\mu(N+1)} \sum_{i=1}^b M_i^2.
\eeq
\end{corollary}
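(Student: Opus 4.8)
The plan is simply to instantiate Theorem~\ref{theorem_strongly} with the stepsize policy \eqnok{set_gamma_strong}. First I would check admissibility: since $\gamma_k = 2bQ/(\mu(k+1)) \le bQ/\mu$ holds exactly when $k \ge 1$, the requirement \eqnok{def_theta_strong0} is satisfied, so the conclusion \eqnok{result_strong} of Theorem~\ref{theorem_strongly} is available for all $N \ge 1$ and all $x \in X$.

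Next I would make the three ingredients of \eqnok{result_strong} explicit under this choice. Because $\gamma_k \mu/(bQ) = 2/(k+1)$, we have $1 - \gamma_k \mu/(bQ) = (k-1)/(k+1)$, so the recursion \eqnok{def_theta_strong} telescopes to the closed form $\Gamma_k = 2/(k(k+1))$ (which is consistent with $\Gamma_1 = 1$). Consequently $\theta_k = \gamma_k/\Gamma_k = bQk/\mu$, whence $\sum_{k=1}^N \theta_k = (bQ/\mu)\sum_{k=1}^N k = bQN(N+1)/(2\mu)$, and $\gamma_k \theta_k = 2b^2Q^2 k/(\mu^2(k+1)) \le 2b^2Q^2/\mu^2$, so $\sum_{k=1}^N \gamma_k \theta_k \le 2b^2Q^2 N/\mu^2$.

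The one point deserving a moment's attention is that the Bregman term in \eqnok{result_strong} drops out: since $\gamma_1 = bQ/\mu$, the coefficient $b - \gamma_1 \mu/Q$ equals zero, so $(b - \gamma_1\mu/Q)\sum_{i=1}^b V_i(x_1^{(i)}, x^{(i)}) = 0$ regardless of $x$. Substituting the quantities above into \eqnok{result_strong} then gives
\[
\bbe[f(\bar x_N) - f(x)] \le \frac{2\mu}{bQN(N+1)} \cdot \frac{1}{2} \cdot \frac{2b^2Q^2 N}{\mu^2} \sum_{i=1}^b M_i^2 = \frac{2bQ}{\mu(N+1)} \sum_{i=1}^b M_i^2 ,
\]
which is exactly \eqnok{result_strong1}. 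I do not anticipate any genuine obstacle; the argument is pure bookkeeping, the only mildly delicate steps being the evaluation of the closed form for $\Gamma_k$ (hence $\theta_k$) and the observation that the initial Bregman term cancels.
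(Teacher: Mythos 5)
Your proposal is correct and follows essentially the same route as the paper: verify the stepsize condition, compute the closed forms $\Gamma_k = 2/(k(k+1))$ and $\theta_k = bQk/\mu$, observe that $b - \gamma_1\mu/Q = 0$ kills the Bregman term, bound $\sum_k \gamma_k\theta_k \le 2b^2Q^2N/\mu^2$, and substitute into \eqnok{result_strong}. The arithmetic checks out and nothing is missing.
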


\begin{proof}
It can be easily seen from \eqnok{def_theta_strong} and \eqnok{set_gamma_strong}
that
\beq \label{temp_rel1}
\Gamma_k = \frac{2}{k(k+1)}, \ \
\theta_k = \frac{\gamma_k}{\Gamma_k} = \frac{b kQ}{\mu}, \ \
b - \frac{\gamma_1 \mu}{ Q}= 0,
\eeq
\beq \label{temp_rel2}
\sum_{k=1}^N \theta_k = \frac{bQ N (N+1)}{2 \mu}, \ \ \
\sum_{k=1}^N \gamma_k \theta_k
\le \frac{2 b^2Q^2 N }{ \mu^2}, \
\eeq
and
\beq \label{temp_rel3}
\sum_{k=1}^N \theta_k^2 = \frac{b^2Q^2}{\mu ^2} \frac{N (N+1) (2N+1)}{6}
\le \frac{b^2Q^2}{\mu ^2} \frac{N (N+1)^2}{3}.
\eeq
Hence, by \eqnok{result_strong},
\[
\bbe[f(\bar x_N) - f(x)] \le
 \frac{1}{2} \left(\sum_{k=1}^N \theta_k\right)^{-1}
  \sum_{k=1}^N \gamma_k \theta_k \sum_{i=1}^b
M_i^2 \le \frac{2bQ}{\mu (N+1)} \sum_{i=1}^b
M_i^2.
\]
\end{proof}

\vgap

In view of \eqnok{result_strong1},
the number of iterations performed by the SBMD method to find an $\epsilon$-solution
for nonsmooth strongly convex problems can be bound by
\[
\frac{2b}{\mu \epsilon} \sum\limits_{i=1}^b M_i^2,
\]
which is comparable to the optimal bound obtained in \cite{jn10-1,GhaLan12-2a,NO13-1}
(up to a constant factor $b$). To the best of our knowledge, no such complexity
results have been obtained before for BCD type methods in the literature.

\subsection{Large-deviation properties of SBMD for nonsmooth problems}
Our goal in this subsection is to establish the large-deviation results
associated with the SBMD algorithm under the following 
``light-tail'' assumption about the random variable $\xi$:
\beq \label{assump1.b}
\bbe\left\{ \exp \left[ \left\| G_i (x,\xi )\right\|_{i,*}^2  /M_i^2 \right]\right\} \le \exp (1), \; i=1,2,...,b.
\eeq
It can be easily seen that \eqnok{assump1.b} implies \eqnok{assump1.a}
by Jensen's inequality.
It should be pointed out that the above ``light-tail'' assumption
is alway satisified for determinisitc problems with bounded
subgradients.

For the sake of simplicity, we only consider the case when
the random variables $\{i_k\}$ in the SBMD agorithm are uniformly distributed, i.e., relation \eqnok{uniform}
holds. The following result states the large-deviation properties of
the SBMD algorithm for solving general nonsmooth problems without assuming strong convexity.

\begin{theorem} \label{the_large_dev}
Suppose that Assumptions~\eqnok{assump1.b} and \eqnok{uniform} holds.
Also assume that $X$ is bounded.
\begin{itemize}
\item [a)] For solving general nonsmooth CP problems (i.e., \eqnok{equal_theta_gamma} holds), we have
\beq \label{nonsmooth_prob}
\begin{array}{l}
\prob\left\{
f(\bar x_N) - f(x) \ge b \left(\sum\limits_{k = 1}^N \gamma _k\right)^{-1}
\left[\sum \limits_{i=1}^b V_i(x_1^{(i)}, x^{(i)}) +
\bar M^2 \sum\limits_{k=1}^N \gamma_k^2 \right. \right.\\
\left.\left. + \lambda \bar M^2 \left( \sum\limits_{k=1}^N \gamma_k^2 +  32 \, b
\sum\limits_{i=1}^b \cD_i
\sqrt{\sum\limits_{k=1}^N \gamma_k^2}\,\right) \right]
\right\} \le \exp\left(-\lambda^2/3\right) + \exp(-\lambda),
\end{array}
\eeq
for any $N \ge 1$, $x \in X$ and $\lambda > 0$,
where $\bar M = \max\limits_{i=1, \ldots, b} M_i$.
\item[b)] For solving strongly convex problems (i.e., \eqnok{def_strong_convexity},
\eqnok{q-growth}, \eqnok{def_theta_strong0}, and \eqnok{def_theta_strong} hold),
we have
\beq \label{nonsmooth_prob_strong}
\begin{array}{l}
\prob\left\{
f(\bar x_N) - f(x) \ge b \left(\sum\limits_{k = 1}^N \theta_k\right)^{-1}
\left[( b - \frac{\gamma_1 \mu}{ Q})\sum \limits_{i=1}^b V_i(x_1^{(i)}, x^{(i)}) +
{\bar M}^2 \sum\limits_{k=1}^N \gamma_k \theta_k  \right. \right.\\
\left.\left. + \lambda {\bar M}^2 \left( \sum\limits_{k=1}^N \gamma_k \theta_k +  32 b \sum\limits_{i=1}^b \cD_i
\sqrt{\sum\limits_{k=1}^N \theta_k^2}\,\right) \right]
\right\} \le \exp\left(-\lambda^2/3\right) + \exp(-\lambda),
\end{array}
\eeq
for any $N \ge 1$, $x \in X$ and $\lambda > 0$.
\end{itemize}
\end{theorem}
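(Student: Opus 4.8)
The plan is to revisit the recursions established in the proofs of Theorem~\ref{theorem_nonsmooth} and Theorem~\ref{theorem_strongly}, but instead of merely taking expectations, to control the two stochastic error terms $\delta_k$ and $\bar\delta_k$ in probability. For part a), I start from \eqnok{main_rec} with $\theta_k=\gamma_k$: using $V(x_1,x)=b\sum_i V_i(x_1^{(i)},x^{(i)})$ (since $p_i=1/b$ by \eqnok{uniform}), the deviation $f(\bar x_N)-f(x)$ is bounded by $(\sum_k\gamma_k)^{-1}$ times $V(x_1,x)+\sum_k\gamma_k\delta_k+\tfrac12\sum_k\gamma_k^2\bar\delta_k$. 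The goal is to show each of the last two sums exceeds its ``mean plus $\lambda\times$fluctuation'' only with small probability, then combine via a union bound to get $\exp(-\lambda^2/3)+\exp(-\lambda)$. For part b) the structure is identical but starting from \eqnok{result_key_strong}, with $\gamma_k$ replaced by $\theta_k$ in the martingale term and $\gamma_k\theta_k$ in the second-moment term, so I would just indicate that the same argument applies mutatis mutandis.

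The two sums are handled by different tail bounds. For $\sum_k\gamma_k^2\bar\delta_k$: since $\bar\delta_k=p_{i_k}^{-1}\|G_{i_k}(x_k,\xi_k)\|_{i_k,*}^2=b\|G_{i_k}(x_k,\xi_k)\|_{i_k,*}^2$, the light-tail assumption \eqnok{assump1.b} together with $\bar M=\max_i M_i$ gives $\bbe\{\exp(\bar\delta_k/(b\bar M^2))\mid\zeta_{[k-1]}\}\le\exp(1)$, and then a standard application of Markov's inequality to $\exp\big(\sum_k (\gamma_k^2/(b\bar M^2\sum_j\gamma_j^2))\bar\delta_k\big)$ — using convexity of $\exp$ along the normalized weights $\gamma_k^2/\sum_j\gamma_j^2$ — yields $\prob\{\sum_k\gamma_k^2\bar\delta_k\ge (1+\lambda)b\bar M^2\sum_k\gamma_k^2\}\le\exp(-\lambda)$. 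For the martingale term $\sum_k\gamma_k\delta_k$: by \eqnok{zero-diff} this is a martingale difference sequence in $\zeta_{[k]}$, and each increment is bounded, roughly, by $\gamma_k\cdot p_{i_k}^{-1}\|U_{i_k}G_{i_k}\|_{i_k,*}\cdot\|x-x_k\|$, which by \eqnok{boundX_i} is at most $\gamma_k\cdot b\|G_{i_k}\|_{i_k,*}\cdot 2\sqrt{2\cD_{i_k}}$ on the active block; squaring and using \eqnok{assump1.b} gives a conditional sub-Gaussian-type bound $\bbe\{\exp(\delta_k^2/\sigma_k^2)\mid\zeta_{[k-1]}\}\le\exp(1)$ with $\sigma_k^2$ of order $b\bar M^2\cD\cdot\gamma_k^2$ (here $\cD$ absorbing the $\sum_i\cD_i$ factor). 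I would then invoke the standard large-deviation bound for such martingales (as in Lemma~2 of \cite{NJLS09-1} / \cite{Lan10-3}), giving $\prob\{\sum_k\gamma_k\delta_k\ge\lambda\sqrt{\sum_k\sigma_k^2}\}\le\exp(-\lambda^2/3)$; tracking constants, $\sqrt{\sum_k\sigma_k^2}$ becomes the term $32\,b\sum_i\cD_i\sqrt{\sum_k\gamma_k^2}$ appearing in \eqnok{nonsmooth_prob}.

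Assembling: on the complement of the two bad events (total probability at most $\exp(-\lambda^2/3)+\exp(-\lambda)$), we have $\sum_k\gamma_k\delta_k<\lambda\bar M^2\cdot 32b\sum_i\cD_i\sqrt{\sum_k\gamma_k^2}$ and $\tfrac12\sum_k\gamma_k^2\bar\delta_k<\tfrac12(1+\lambda)b\bar M^2\sum_k\gamma_k^2$; substituting into \eqnok{main_rec} and collecting the $b$'s — noting $V(x_1,x)=b\sum_i V_i$ — reproduces exactly the bracketed expression in \eqnok{nonsmooth_prob}. Part b) follows by the same substitutions into \eqnok{result_key_strong}, with $\sum_k\theta_k$ in the denominator, $\gamma_k\theta_k$ weighting $\bar\delta_k$, and $\theta_k$ weighting $\delta_k$ (so $\sqrt{\sum_k\theta_k^2}$ replaces $\sqrt{\sum_k\gamma_k^2}$ in the fluctuation term), together with $\|x-x_k\|\le 2\sqrt{2\sum_i\cD_i}$ from \eqnok{boundX} to bound the martingale increments.

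The main obstacle I anticipate is bookkeeping the constants so that the bound on the martingale increments genuinely produces the stated factor $32\,b\sum_i\cD_i$: one must carefully combine $p_{i_k}^{-1}=b$, the diameter bound $\|x^{(i)}-x_k^{(i)}\|_i\le 2\sqrt{2\cD_i}$, the Cauchy–Schwarz step that turns $\langle U_{i_k}G_{i_k},x-x_k\rangle$ into $\|G_{i_k}\|_{i_k,*}\|x^{(i_k)}-x_k^{(i_k)}\|_{i_k}$, and the passage from the conditional MGF bound to the sub-Gaussian parameter; the constant $32$ is precisely what emerges from $(2\sqrt2)^2=8$ times a factor $4$ coming from the standard moment-to-MGF conversion. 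None of this is deep, but it is the step most prone to slips, so I would carry it out explicitly while citing \cite{NJLS09-1,Lan10-3} for the generic martingale inequality and for the elementary fact that $\bbe[\exp(Z/\sigma^2)]\le\exp(1)$ implies the desired tail.
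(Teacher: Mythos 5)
Your proposal is correct and follows essentially the same route as the paper's proof: both start from \eqnok{main_rec} (resp.\ \eqnok{result_key_strong}), bound the martingale term $\sum_k\gamma_k\delta_k$ via a conditional sub-Gaussian estimate ${\cal M}^2=32\,b^2\bar M^2\sum_i\cD_i$ and the standard martingale large-deviation lemma to get the $\exp(-\lambda^2/3)$ tail, and bound $\sum_k\gamma_k^2\bar\delta_k$ via the MGF bound $\bbe[\exp(\bar\delta_k/(b\bar M^2))]\le\exp(1)$, convexity of $\exp$ along the normalized weights, and Markov's inequality to get the $\exp(-\lambda)$ tail, then take a union bound. The only point to watch in your constant bookkeeping is that the increment $\delta_k$ involves the full vector $g(x_k)$ over all blocks (not just the active one), so the diameter bound must be \eqnok{boundX} together with $\|g(x_k)\|_*^2\le\sum_i M_i^2$ from \eqnok{bnd_G} — which is exactly how the paper arrives at the factor $32$.
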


\begin{proof} We first show part a).
Note that by \eqnok{assump1.b}, the concavity of
$\phi(t) = \sqrt{t}$ for $t \ge 0$ and the Jensen's inequality,
we have, for any $i=1,2,...,b$,
\beq \label{assump1.b1}
\bbe \left\{\exp \left[ \left\| G_i (x,\xi )\right\|_{i,*}^2
/(2 M_i^2) \right]\right\} \le
\sqrt{\bbe\left\{ \exp \left[ \left\| G_i (x,\xi )\right\|_{i,*}^2
/M_i^2 \right]\right\}} \le
\exp (1/2).
\eeq
Also note that by \eqnok{zero-diff}, $\delta_k$, $k = 1, \ldots,N$,
is the martingale-difference. In addition, denoting
${\cal M}^2 \equiv 32 \, b^2 \bar M^2 \sum_{i=1}^b \cD_i$,
we have
\begin{align}
\bbe[\exp\left({\cal M}^{-2} \delta_k^2\right) ] &\le \sum_{i=1}^b p_i
\bbe\left[\exp\left({\cal M}^{-2} \|x - x_k\|^2 \,
\|p_i^{-1} U_i^T G_i - g(x_k)\|_*^2\right)\right]
& & \text{(by \eqnok{eqn_prob}, \eqnok{def_delta})}\nn \\
&\le \sum_{i=1}^b p_i \bbe\left\{\exp\left[2 {\cal M}^{-2} \|x - x_k\|^2
\left(b^2 \|G_i\|_*^2 + \|g(x_k)\|_*^2\right)\right] \right\}\nn
& & \text{(by definition of $U_i$ and \eqnok{uniform})}\\
&\le \sum_{i=1}^b p_i \bbe\left\{\exp\left[16 {\cal M}^{-2}
\left(\sum_{i=1}^b \cD_i\right)
\left(b^2\|G_i\|_*^2 + \sum_{i=1}^b M_i^2\right)\right] \right\} \nn
& & \text{(by \eqnok{bnd_G} and \eqnok{boundX})}\\
&\le  \sum_{i=1}^b p_i \bbe\left\{\exp \left[
\frac{b^2\|G_i\|_*^2 + \sum_{i=1}^b M_i^2}{2 b^2\bar M^2}
\right] \right\} \nn
& & \text{(by definition of ${\cal M}$)}\\
&\le \sum_{i=1}^b p_i \bbe\left\{\exp \left[
\frac{\|G_i\|_*^2}{2 M_i^2} + \frac{1}{2} \right] \right\} \le \exp(1). \nn
& & \text{(by \eqnok{assump1.b1})}
\end{align}
Therefore, by the well-known large-deviation theorem on the Martingale-difference (see, e.g., Lemma 2 of \cite{lns11}),
we have
\beq \label{nonsmooth_prob1}
\prob\left\{\sum_{k=1}^N \gamma_k \delta_k \ge \lambda {\cal M} \sqrt{\sum_{k=1}^N
\gamma_k^2} \right\} \le \exp(-\lambda^2/3).
\eeq
Also observe that under Assumption \eqnok{assump1.b},
\begin{align}
\bbe\left[\exp\left(\bar \delta_k/(b \bar M^2)\right)\right]
&\leq \sum_{i=1}^b p_i \bbe\left[\exp\left(
\left\| {G_{i } (x_k ,\xi _k )}
\right\|_{i,*}^2/\bar M^2\right)\right] \nn
& & \text{(by \eqnok{eqn_prob}, \eqnok{def_delta}, \eqnok{uniform})}\\
&\le \sum_{i=1}^b p_i \bbe\left[\exp\left(
\left\| {G_{i } (x_k ,\xi _k )}
\right\|_{i,*}^2/M_i^2\right)\right] \nn
& & \text{(by definition of $\bar M$)}\\
&\le \sum_{i=1}^b p_i \exp(1) = \exp(1).
& & \text{(by \eqnok{assump1.a})} \nn
\end{align}
Setting $\pi_k=\gamma_k^2/\sum_{k=1}^N \gamma_k^2$, we have
$
\exp\left \{\sum_{k=1}^N\pi_k \bar \delta_k / (b {\bar M}^2)\right \}\leq
\sum_{k=1}^N \pi_k \exp\{ \bar \delta_k/(b {\bar M}^2)\}.
$
Using these previous two inequalities, we have
$$
\bbe\left[\exp\left \{\sum_{k=1}^N\gamma_k^2 \bar \delta_k / (b {\bar M}^2
\sum_{k=1}^N\gamma_k^2)\right \}\right]\leq\exp\{1\}.
$$
It then follows from Markov's inequality that
\beq \label{nonsmooth_prob2}
\forall\lambda\geq0:
\Prob\left\{\sum_{k=1}^N \gamma_k^2 \bar \delta_k > (1+\lambda)
(b {\bar M}^2) \sum_{k=1}^N \gamma_k^2 \right\}\leq \exp\{-\lambda\}.
\eeq
Combining \eqnok{main_rec}, \eqnok{nonsmooth_prob1} and \eqnok{nonsmooth_prob2},
we obtain \eqnok{nonsmooth_prob}.

The probabilistic bound in \eqnok{nonsmooth_prob_strong} follows from
\eqnok{result_key_strong} and an argument similar to the one used
in the proof of \eqnok{nonsmooth_prob}, and hence the details are skipped.

\end{proof}

We now provide some specialized large-deviation results for
the SBMD algorithm with different selections of $\{\gamma_k\}$ and $\{\theta_k\}$.

\begin{corollary} \label{cor_large_dev}
Suppose that \eqnok{assump1.b} and \eqnok{uniform} hold. Also assume that $X$ is bounded.
\begin{itemize}
\item [a)] If $\{\theta_k\}$ and $\{\gamma_k\}$ are set
to \eqnok{equal_theta_gamma} and \eqnok{def_nonsmooth_prob2} for general nonsmooth problems,
then we have
\beq \label{nonsmooth_prob_cor1}
\begin{array}{l}
\prob\left\{
f(\bar x_N) - f(x) \ge \frac{b \sqrt{\sum_{i=1}^b M_i^2}}{\sqrt{2 Nb{\tilde D}^2}}
\left( 2b{\tilde D}^2+  \sum_{i=1}^b {\cal D}_i +  2 \lambda b{\tilde D}^2\right)
+ \frac{32 \lambda b^{\frac{5}{2}} \bar M^2 \sum_{i=1}^b {\cal D}_i}{\sqrt{Nb{\tilde D}^2}}
\right\} \\
\le \exp\left(-\lambda^2/3\right) + \exp(-\lambda)
\end{array}
\eeq
for any $x \in X$ and $\lambda > 0$.
\item [b)]  If $\{\theta_k\}$ and $\{\gamma_k\}$ are set
to \eqnok{def_theta_strong} and \eqnok{set_gamma_strong}
for strongly convex problems, then we have
\beq \label{nonsmooth_prob_cor2}
\begin{array}{l}
\prob\left\{
f(\bar x_N) - f(x) \ge \frac{4 (1+ \lambda) b^2 \bar M^2 Q}{(N+1) \mu }
+ \frac{64 \lambda b^2 \bar M^2 \sum_{i=1}^b {\cal D}_i}{\sqrt{3 N}}
\right\}
\le \exp\left(-\lambda^2/3\right) + \exp(-\lambda)
\end{array}
\eeq
for any $x \in X$ and $\lambda > 0$.
\end{itemize}
\end{corollary}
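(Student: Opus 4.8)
The plan is to derive both parts as immediate specializations of Theorem~\ref{the_large_dev}: the probabilistic structure and the tail constants $\exp(-\lambda^2/3)$ and $\exp(-\lambda)$ are already in place, so all that remains is to substitute the prescribed stepsize/weight choices into the right-hand sides of \eqnok{nonsmooth_prob} (for part a) and \eqnok{nonsmooth_prob_strong} (for part b), evaluate the elementary sums, and simplify.

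For part a), I would put $\theta_k = \gamma_k \equiv \gamma$ with $\gamma$ as in \eqnok{def_nonsmooth_prob2} into \eqnok{nonsmooth_prob}. Because the stepsizes are constant, $\sum_{k=1}^N \gamma_k = N\gamma$, $\sum_{k=1}^N \gamma_k^2 = N\gamma^2$, and $\sqrt{\sum_{k=1}^N \gamma_k^2} = \sqrt{N}\,\gamma$, so that $\bigl(\sum_{k=1}^N \gamma_k\bigr)^{-1}\sum_{k=1}^N \gamma_k^2 = \gamma$ and $\bigl(\sum_{k=1}^N \gamma_k\bigr)^{-1}\sqrt{\sum_{k=1}^N \gamma_k^2} = 1/\sqrt{N}$. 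Substituting the explicit value $\gamma = \sqrt{2b}\,\tilde D / \sqrt{N \sum_{i=1}^b M_i^2}$, bounding $\sum_{i=1}^b V_i(x_1^{(i)},x^{(i)}) \le \sum_{i=1}^b \cD_i$ via \eqnok{bound_V_i}, and using $\bar M^2 \le \sum_{i=1}^b M_i^2$ to absorb the $\bar M^2 \sum_k \gamma_k^2$ contributions, one collects the remaining terms into the stated bound \eqnok{nonsmooth_prob_cor1}.

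For part b), I would use the stepsizes \eqnok{set_gamma_strong} together with the weights \eqnok{def_theta_strong}, and reuse the closed-form identities \eqnok{temp_rel1}--\eqnok{temp_rel3} already established in the proof of Corollary~\ref{co_stronglyconvex}: $\Gamma_k = 2/(k(k+1))$, $\theta_k = bkQ/\mu$, $b - \gamma_1\mu/Q = 0$ (so the $V_i$ contribution in \eqnok{nonsmooth_prob_strong} vanishes), $\sum_{k=1}^N \theta_k = bQN(N+1)/(2\mu)$, $\sum_{k=1}^N \gamma_k\theta_k \le 2b^2Q^2N/\mu^2$, and $\sum_{k=1}^N \theta_k^2 \le b^2Q^2N(N+1)^2/(3\mu^2)$. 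Then $b\bigl(\sum_k \theta_k\bigr)^{-1}\bar M^2 \sum_k \gamma_k\theta_k \le 4 b^2 \bar M^2 Q / (\mu(N+1))$, which accounts for both the non-$\lambda$ and the $\lambda$ copy of that term, while $b\bigl(\sum_k \theta_k\bigr)^{-1}\cdot 32 b \sum_{i=1}^b \cD_i \sqrt{\sum_k \theta_k^2} \le 64 b^2 \sum_{i=1}^b \cD_i / \sqrt{3N}$, which, carrying along the factor $\lambda \bar M^2$, produces the last term; collecting these yields \eqnok{nonsmooth_prob_cor2}.

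I do not expect a genuine obstacle: the substantive work — the martingale large-deviation estimate for $\sum_k \gamma_k \delta_k$ and the Markov-inequality bound for $\sum_k \gamma_k^2 \bar\delta_k$ — was already carried out inside Theorem~\ref{the_large_dev}, so what is left is pure bookkeeping. The only step demanding a little care is tracking the powers of $b$, $Q$, $\mu$, and $N$ in part b) when simplifying the ratios of $\sum_k \theta_k$, $\sum_k \gamma_k\theta_k$, and $\sum_k \theta_k^2$; this is precisely the computation performed (for the in-expectation bound) in the proof of Corollary~\ref{co_stronglyconvex}, so invoking \eqnok{temp_rel1}--\eqnok{temp_rel3} removes even that nuisance.
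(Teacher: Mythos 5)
Your proposal is correct and follows essentially the same route as the paper: substitute the constant stepsizes into \eqnok{nonsmooth_prob} (evaluating $\sum_k\gamma_k$ and $\sum_k\gamma_k^2$ in closed form, bounding $\sum_i V_i(x_1^{(i)},x^{(i)})\le\sum_i\cD_i$ and using $\bar M^2\le\sum_i M_i^2$) for part a), and reuse the identities \eqnok{temp_rel1}--\eqnok{temp_rel3} in \eqnok{nonsmooth_prob_strong} for part b). This is exactly the bookkeeping the paper performs.
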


\begin{proof}
Note that by \eqnok{bound_V_i}, we have
$\sum \limits_{i=1}^b V_i(x_1^{(i)}, x^{(i)}) \le \sum_{i=1}^b {\cal D}_i$.
Also by \eqnok{def_nonsmooth_prob2}, we have
\[
\sum_{k=1}^N \gamma_k = \left(\frac{2 Nb{\tilde D}^2}{\sum_{i=1}^b M_i^2}\right)^\frac{1}{2}
\ \ \mbox{and} \ \
\sum_{k=1}^N \gamma_k^2 = \frac{2b{\tilde D}^2}{\sum_{i=1}^b M_i^2}.
\]
Using these identities and \eqnok{nonsmooth_prob}, we conclude that
\[
\begin{array}{l}
\prob\left\{
f(\bar x_N) - f(x) \ge b \left(\frac{\sum_{i=1}^b M_i^2}{2 Nb{\tilde D}^2}\right)^{\frac{1}{2}}
\left[\sum \limits_{i=1}^b {\cal D}_i +
2 b{\tilde D}^2\bar M^2 \left(\sum\limits_{i=1}^b M_i^2 \right)^{-1}\right. \right.\\
\left.\left. + \lambda \bar M^2 \left( 2b{\tilde D}^2  \left(\sum\limits_{i=1}^b \bar M_i^2 \right)^{-1}+  32 \sqrt{2} \, b^{\frac{3}{2}}\tilde D
\sum\limits_{i=1}^b \cD_i \left(\sum\limits_{i=1}^b \bar M_i^2 \right)^{-\frac{1}{2}}\right) \right]
\right\} \le \exp\left(-\lambda^2/3\right) + \exp(-\lambda).
\end{array}
\]
Using the fact that $\bar M^2 \le \sum_{i=1}^b M_i^2$ and simplifying the above relation,
we obtain \eqnok{nonsmooth_prob_cor1}. Similarly, relation \eqnok{nonsmooth_prob_cor2}
follows directly from \eqnok{nonsmooth_prob_strong} and a few bounds in \eqnok{temp_rel1},
\eqnok{temp_rel2} and \eqnok{temp_rel3}.
\end{proof}

\vgap

We now add a few remarks about the results obtained in Theorem~\ref{the_large_dev}
and Corollary~\ref{cor_large_dev}. Firstly, observe that by \eqnok{nonsmooth_prob1}, the number of iterations required
by the SBMD method to find an $(\epsilon, \lambda)$-solution of \eqnok{sp},
i.e., a point $\bar x \in X$ s.t. $\prob\{f(\bar x) - f^* \ge \epsilon\} \le \lambda$ can be bounded by
\[
{\cal O} \left( \frac{\log^2 (1/\lambda)}{\epsilon^2}\right)
\]
after disregarding a few constant factors.
To the best of our knowledge, now such large-deviation results have been
obtained before for the BCD methods for solving general nonsmooth CP problems,
although similar results have been established for
solving smooth problems or some composite problems~\cite{Nest10-1,Rich12-1}.

Secondly, it follows from \eqnok{nonsmooth_prob_strong} that
the number of iterations performed by the SBMD method to find an $(\epsilon, \lambda)$-solution
for nonsmooth strongly convex problems, after disregarding a few
constant factors, can be bounded by
$
{\cal O} \left( \log^2 (1/\lambda)/\epsilon^2\right),
$
which is about the same as the one obtained for solving nonsmooth problems without assuming
convexity. It should be noted, however, that this bound can be improved
to
$
{\cal O} \left( \log (1/\lambda)/\epsilon\right),
$
for example, by incorporating a domain shrinking procedure~\cite{GhaLan13-1}.

\setcounter{equation}{0}
\section{The SBMD algorithm for convex composite optimization} \label{sec_smooth}
In this section, we
present a variant of the SBMD algorithm which can make use of the smoothness properties
of the objective function of an SP problem.
More specifically, we consider convex composite optimization problems
given in the form of \eqnok{spc}, where $f(\cdot)$ is smooth and its
gradients $g(\cdot)$ satisfy
\beq \label{smooth_f}
\|  g_i(x + U_i \rho_i) - g_i (x)\|_i  \le L_i \| \rho_i\|_i \; \; \forall \; \rho_i \in \bbr^{n_i}, \ i=1,2,...,b.
\eeq
It then follows that
\beq \label{smooth_f1}
f(x+U_{i}\rho_i) \leq f(x)+\left\langle { g_{i}(x),\rho_i} \right\rangle+\frac{L_{i}}{2}\| \rho_i\|_i^2
\;\; \forall \rho_i \in \bbr^{n_i}, x \in X.
\eeq
The following assumption is made throughout this section.
\begin{assumption}\label{separation}The function $\chi (\cdot)$ is block separable, i.e., $\chi(\cdot)$ can be decomposed as
\beq\label{separable}\chi (x) = \sum \limits_{i=1}^n \chi_i(x^{(i)})\ \ \forall x \in X.
\eeq
where $\chi_i: \bbr^{n_i}\rightarrow \bbr$ are closed and convex.
\end{assumption}

\vgap

Let $V_i(\cdot , \cdot)$ defined in \eqnok{def_vi}. For a given $x \in X_i$ and $y \in R^{n_i},$
we define the composite prox-mapping as
\beq \label{def_prox_mapping_composite}
 {\cal P}_{i}(x, y, \gamma) :=\argmin_{z \in X_i} {\left \langle {y,z-x}\right \rangle+\frac{1}{\gamma}V_i(z,x)+\chi_i (x)}.
\eeq
Clearly, if $\chi(x) = 0$ for any $x \in X$, then problem \eqnok{spc} becomes
a smooth optimization problem and the composite prox-mapping \eqref{def_prox_mapping_composite} reduces
to \eqref{def_prox_mapping}.


\vgap

We are now ready to describe a variant of the SBMD algorithm
for solving smooth and composite problems.
\begin{algorithm} [H]
    \caption{A variant of SBMD for convex stochastic composite optimization}
    \label{algSBMD2}
    \begin{algorithmic}
\STATE Let $x_1 \in X$, stepsizes $\{\gamma_k\}_{k \ge 1}$, weights $\{\theta_k\}_{k \ge 1}$, and
probabilities $p_i \in [0,1]$, $i = 1, \ldots, b$, s.t. $\sum_{i=1}^b p_i = 1$
be given. Set $s_1 = 0,$ $u_i = 1$ for $i = 1, \ldots, b$, and $\theta_1=0.$

\FOR {$k=1, \ldots,N$}

\STATE 1. Generate a random variable $i_k$ according to \eqref{eqn_prob}.

\STATE 2. Update $s^{(i)}_k$, $i = 1, \ldots, b$, by \eqref{eqn_sum} and then set $u_{i_k} = k+1$.

\STATE 3. Update
$x_k^{(i)}$, $i = 1, \ldots, b$, by 
\beq \label{eqn_prox_composite}
x^{(i)}_{k+1}=
\left\{
\begin{array}{ll}
{\cal P}_{i_k}(x_{k}^{(i)}, G_{i_k}(x_{k},\xi_{k}), \gamma_k) & i = i_k,\\
x^{(i)}_{k} & i \neq i_k.
\end{array}
\right.
\eeq

\ENDFOR

{\bf Output:}  Set $s_{N+1}^{(i)} = s_{N+1}^{(i)}  + x_{N+1}^{(i)} \sum_{j = u_{i}}^{N+1} \theta_j$, $i = 1, \ldots, b$,
and  ${\bar x}_N = s_{N+1} / \sum \limits_{k=1}^{N+1} \theta_k.$
    \end{algorithmic}
\end{algorithm}

A few remarks about the above variant of SBMD algorithm for composite convex problem in place.
Firstly, similar to  Algorithm~\ref{algSBMD}, $G(x_k, \xi_k)$
is an unbiased estimator of $g(x_k)$ (i.e., \eqnok{assump1} holds).
Moreover, in order to know exactly the effect of stochastic noises in $G(x_k, \xi_k)$,
we assume that for some $\sigma_i \geq 0,$
\beq \label{assump2.a}
E[\|  G_i(x,\xi)- g_i(x)\| _{i,*}^2]\leq \sigma_i^2, \; i=1,\ldots,b.
\eeq
Clearly, if $\sigma_i = 0$, $i = 1, \ldots, b$, then
the problem is deterministic.
For notational convenience, we also denote
\beq \label{def_sigma}
\sigma :=\Big( \sum_{i=1}^b  {\sigma _i^2  }\Big)^{\frac{1}{2}}.
\eeq 

Secondly, observe that the way we compute the output $\bar x_N$ in Algorithm \ref{algSBMD2}
is slightly different from Algorithm \ref{algSBMD}.
In particular, we set $\theta_1=0$ and compute ${\bar x}_N $ of Algorithm~\ref{algSBMD2} as a weighted average of
the search points $x_2,...,x_{N+1},$ i.e.,
\beq \label{def_bar_x_composite}
\bar x_N =  \left(\sum_{k=2}^{N+1} \theta_k \right)^{-1} s_{N+1}
= \left(\sum_{k=2}^{N+1} \theta_k \right)^{-1}  \sum_{k=2}^{N+1} (\theta_k x_{k}),
\eeq
while the output of Algorithm~\ref{algSBMD} is
taken as a weighted average of $x_1,...,x_N$.

Thirdly, it can be easily seen from \eqnok{bound_V_i}, \eqnok{smooth_f}, and \eqnok{assump2.a}
that if $X$ is bounded, then
\begin{align}
\bbe[\|G_i(x,\xi)\| _{i,*}^2] &\le  2 \|g_i(x)\|_{i,*}^2 + 2 \bbe\|  G_i(x,\xi)- g_i(x)\| _{i,*}^2]
\le 2 \|g_i(x)\|_{i,*}^2 + 2 \sigma_i^2 \nn \\
&\le 2 \left[ 2 \|g_i(x) - g_i(x_1)\|_{i,*}^2  + 2 \|g_i(x_1)\|_{i,*}^2\right]
+ 2\sigma_i^2 \nn \\
&\le 4 L_i^2 \|x - x_1\|_{i,*}^2  + 4 \|g_i(x_1)\|_{i,*}^2
+ 2\sigma_i^2 \nn\\
&\le 8 L_i^2 {\cal D}_i + 4 \|g_i(x_1)\|_{i,*}^2
+ 2\sigma_i^2, \ \ i = 1, \ldots, b. \label{def_smoothM}
\end{align}
Hence, we can directly apply Algorithm~\ref{algSBMD} in the previous section  to
problem~\eqnok{spc}, and its rate of convergence is readily given by Theorem~\ref{theorem_nonsmooth}
and \ref{theorem_strongly}. However, in this section we will show that
by properly selecting $\{\theta_k\}$, $\{\gamma_k\}$, and $\{p_i\}$ in the above variant of the SBMD algorithm,
we can significantly improve the dependence of the rate of convergence of the SBMD algorithm on the
Lipschitz constants $L_i$, $i = 1, \ldots, b$.

We first discuss the main convergence properties of Algorithm~\ref{algSBMD2} for convex stochastic composite optimization without assuming strong convexity.
\begin{theorem} \label{theorem_composite}
Suppose that $\{i_k\}$ in Algorithm~\ref{algSBMD2} are uniformly distributed, i.e., \eqref{uniform} holds.
Also assume that $\{\gamma_k\}$ and $\{\theta_k\}$ are chosen such that
for any $k\ge 1$,
\begin{align}
\gamma_k \le \frac{1}{2 \bar L} \ \ \mbox{with} \ \ \bar L := \max_{i=1, \ldots, b} L_i, \label{gamma_composite} \\
\theta_{k+1}=b\gamma_k-(b-1)\gamma_{k+1}. \label{def_theta_composite}
\end{align}
Then,
under Assumption \eqref{assump1} and \eqref{assump2.a}, we have, for any $N \ge 2$,
\beq \label{main_result_smooth}
E[\phi(\bar x_N)-\phi(x^*)] \le \Big( \sum_{k=2}^{N+1} \theta_k \Big)^{-1}\left[(b-1)\gamma_1[\phi(x_1 ) -\phi(x^*)]
 + b\sum_{i=1}^bV_i(x_1 ,x^* ) +  \sigma^2 \sum_{k=1}^N \gamma_k^2 \right],
\eeq
where $x^*$ is an arbitrary solution of problem \eqnok{spc} and $\sigma$ is defined
in \eqnok{def_sigma}.
\end{theorem}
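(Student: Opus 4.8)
The plan is to establish a one-step inequality, take the conditional expectation over $(i_k,\xi_k)$ using the uniform law \eqref{uniform}, and then sum over $k$ with a telescoping tuned to the weight rule \eqref{def_theta_composite}. Throughout write $\rho_k:=x_{k+1}^{(i_k)}-x_k^{(i_k)}$ for the increment in the active block (so $x_{k+1}=x_k+U_{i_k}\rho_k$), $\Delta_k:=G_{i_k}(x_k,\xi_k)-g_{i_k}(x_k)$ for the stochastic error, $\zeta_k:=(i_k,\xi_k)$, $\zeta_{[k]}:=(\zeta_1,\dots,\zeta_k)$, and, as in the proofs of Section~\ref{sec_nonsmooth}, $V(z,x):=\sum_{i=1}^b p_i^{-1}V_i(z^{(i)},x^{(i)})$, which equals $b\sum_{i=1}^b V_i(z^{(i)},x^{(i)})$ here and, from step $k$ to step $k+1$, changes only in its $i_k$-th block so that $V_{i_k}(x_k,x)-V_{i_k}(x_{k+1},x)=b^{-1}[V(x_k,x)-V(x_{k+1},x)]$.

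\emph{Step 1 (one-step inequality).} Applying the descent estimate \eqref{smooth_f1} in block $i_k$ together with the block-separability of $\chi$ (Assumption~\ref{separation}) and using that $x_{k+1}$ differs from $x_k$ only in block $i_k$ gives $\phi(x_{k+1})-\phi(x_k)\le\langle g_{i_k}(x_k),\rho_k\rangle+\tfrac{L_{i_k}}{2}\|\rho_k\|_{i_k}^2+\chi_{i_k}(x_{k+1}^{(i_k)})-\chi_{i_k}(x_k^{(i_k)})$. I would then multiply by $\gamma_k$, substitute $g_{i_k}(x_k)=G_{i_k}(x_k,\xi_k)-\Delta_k$, and bound the $G_{i_k}$-term through the optimality (three-point) property of the composite prox-mapping \eqref{def_prox_mapping_composite} at the comparison point $x^{(i_k)}$ (the composite analogue of Lemma~1 in \cite{NJLS09-1}),
\[
\gamma_k\big\langle G_{i_k}(x_k,\xi_k),\,x_{k+1}^{(i_k)}-x^{(i_k)}\big\rangle+\gamma_k\big[\chi_{i_k}(x_{k+1}^{(i_k)})-\chi_{i_k}(x^{(i_k)})\big]+V_{i_k}(x_k,x_{k+1})\le V_{i_k}(x_k,x)-V_{i_k}(x_{k+1},x).
\]
Using $V_{i_k}(x_k,x_{k+1})\ge\tfrac12\|\rho_k\|_{i_k}^2$ (strong convexity of $\omega_{i_k}$), the stepsize bound $\gamma_k\le1/(2\bar L)\le1/(2L_{i_k})$ so that $\tfrac{\gamma_kL_{i_k}}{2}\le\tfrac14$, and Young's inequality $-\gamma_k\langle\Delta_k,\rho_k\rangle\le\tfrac14\|\rho_k\|_{i_k}^2+\gamma_k^2\|\Delta_k\|_{i_k,*}^2$, every $\|\rho_k\|_{i_k}^2$ term cancels and one arrives at
\[
\gamma_k[\phi(x_{k+1})-\phi(x_k)]\le b^{-1}[V(x_k,x)-V(x_{k+1},x)]-\gamma_k\langle g_{i_k}(x_k),x_k^{(i_k)}-x^{(i_k)}\rangle+\gamma_k[\chi_{i_k}(x^{(i_k)})-\chi_{i_k}(x_k^{(i_k)})]-\gamma_k\langle\Delta_k,x_k^{(i_k)}-x^{(i_k)}\rangle+\gamma_k^2\|\Delta_k\|_{i_k,*}^2.
\]

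\emph{Step 2 (conditional expectation, summation, conclusion).} Taking $\bbe[\,\cdot\mid\zeta_{[k-1]}]$ over the uniform $i_k$ and over $\xi_k$, the block-selected quantities become $b^{-1}$ of their full-vector counterparts: $\bbe[\langle g_{i_k}(x_k),x_k^{(i_k)}-x^{(i_k)}\rangle]=b^{-1}\langle g(x_k),x_k-x\rangle$ and $\bbe[\chi_{i_k}(x^{(i_k)})-\chi_{i_k}(x_k^{(i_k)})]=b^{-1}(\chi(x)-\chi(x_k))$, the term $\langle\Delta_k,x_k^{(i_k)}-x^{(i_k)}\rangle$ has zero conditional mean by \eqref{assump1}, and $\bbe[\|\Delta_k\|_{i_k,*}^2]\le b^{-1}\sigma^2$ by \eqref{assump2.a} and \eqref{def_sigma}; convexity of $f$ then turns $-b^{-1}\gamma_k\langle g(x_k),x_k-x\rangle+b^{-1}\gamma_k(\chi(x)-\chi(x_k))$ into $-b^{-1}\gamma_k(\phi(x_k)-\phi(x))$. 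Multiplying by $b$ and taking total expectation yields, for every $k\ge1$ and $x\in X$, the recursion $b\gamma_k(\bbe[\phi(x_{k+1})]-\phi(x))-(b-1)\gamma_k(\bbe[\phi(x_k)]-\phi(x))\le\bbe[V(x_k,x)-V(x_{k+1},x)]+\gamma_k^2\sigma^2$. Summing for $k=1,\dots,N$, the right side telescopes to at most $V(x_1,x)+\sigma^2\sum_{k=1}^N\gamma_k^2=b\sum_{i=1}^bV_i(x_1^{(i)},x^{(i)})+\sigma^2\sum_{k=1}^N\gamma_k^2$, and on the left the coefficient of $\bbe[\phi(x_j)]-\phi(x)$ for $2\le j\le N$ is exactly $b\gamma_{j-1}-(b-1)\gamma_j=\theta_j$ by \eqref{def_theta_composite}, leaving the boundary terms $-(b-1)\gamma_1(\phi(x_1)-\phi(x))$ and $b\gamma_N(\bbe[\phi(x_{N+1})]-\phi(x))$. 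Setting $x=x^*$, so that $\bbe[\phi(x_{N+1})]-\phi(x^*)\ge0$ and $b\gamma_N\ge\theta_{N+1}$ (since $b\gamma_N-\theta_{N+1}=(b-1)\gamma_{N+1}\ge0$), the left side is bounded below by $\sum_{k=2}^{N+1}\theta_k(\bbe[\phi(x_k)]-\phi(x^*))-(b-1)\gamma_1(\phi(x_1)-\phi(x^*))$; dividing by $\sum_{k=2}^{N+1}\theta_k$ and using the convexity of $\phi$ with $\bar x_N$ being the $\{\theta_k\}$-weighted average of $x_2,\dots,x_{N+1}$ (equation \eqref{def_bar_x_composite}) gives \eqref{main_result_smooth}.

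The main obstacle is Step~1: one must combine the three-point property, the Lipschitz descent, and the Young bound on $-\gamma_k\langle\Delta_k,\rho_k\rangle$ so that every $\|\rho_k\|_{i_k}^2$ term cancels — this is precisely where $\gamma_k\le1/(2\bar L)$ is used — while keeping the residual linear term and the $\chi_{i_k}$-difference in a shape that, after averaging over the uniform $i_k$ and using convexity of $f$, collapses into $-b^{-1}\gamma_k(\phi(x_k)-\phi(x))$. The remaining, bookkeeping-level, point is checking that the $\{\theta_k\}$ telescoping deposits the coefficient $(b-1)\gamma_1$ on $\phi(x_1)-\phi(x^*)$ and lets the $\phi(x_{N+1})$ term be absorbed into $\sum_{k=2}^{N+1}\theta_k\phi(x_k)$ via nonnegativity at $x^*$; the conditional expectations, the use of convexity of $f$, and the $\sigma^2$ bound are routine.
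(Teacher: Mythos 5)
Your proposal is correct and follows essentially the same route as the paper's proof: the block descent estimate \eqnok{smooth_f1}, the three-point (optimality) property of the composite prox-mapping, cancellation of the $\|\rho_k\|_{i_k}^2$ terms via $\gamma_k\le 1/(2\bar L)$ (the paper uses the bound $\gamma_k\|\delta_{i_k}\|_*^2/(2(1-\gamma_k\bar L))$ where you use a $\tfrac14+\tfrac14-\tfrac12$ Young split, but these are equivalent), the same conditional-expectation identities over the uniform $i_k$, and the same telescoping recursion $b\gamma_k\bbe[\phi(x_{k+1})-\phi(x)]-(b-1)\gamma_k\bbe[\phi(x_k)-\phi(x)]\le \bbe[V(x_k,x)-V(x_{k+1},x)]+\gamma_k^2\sigma^2$ finished with $\phi(x_{N+1})\ge\phi(x^*)$, $V\ge 0$, and Jensen. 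Your explicit observation that $b\gamma_N\ge\theta_{N+1}$ is a small clarification of a step the paper leaves implicit.
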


\begin{proof}
For simplicity, let us denote $V_i(z, x) \equiv V_i(z^{(i)}, x^{(i)})$, $g_{i_k} \equiv g^{(i_k)}(x_k)$,
and $V(z,x)=\sum \limits_{i=1}^b p_i^{-1}V_i(z, x)$.
Also denote $\zeta_k = (i_k, \xi_k)$ and $\zeta_{[k]} = (\zeta_1, \ldots, \zeta_k)$,
and let $\delta_{i_k}= G_{i_k}(x_k,\xi_k)- g_{i_k}(x_k)$ and $\rho_{i_k}=U_{i_k}^T(x_{k+1}-x_k)$.
By the definition of $\phi(\cdot)$ in \eqnok{spc}
and \eqnok{smooth_f1}, we have
\begin{align}
\phi (x_{k+1}) &\le  f (x_k ) + \langle {g_{i_k } (x_k ),\rho_{i_k} } \rangle
+ \frac{{L_{i_k } }}{2}\left\| {\rho_{i_k} } \right\|_{i_k}^2   + \chi (x_{k + 1} ) \nn \\
&=  f (x_k ) + \langle {G_{i_k } (x_k ),\rho_{i_k} } \rangle
+ \frac{{L_{i_k } }}{2} \left\| {\rho_{i_k} } \right\|_{i_k}^2   + \chi (x_{k + 1} )
- \langle \delta_{i_k},\rho_{i_k}  \rangle.
\end{align}
Moreover, it follows from the optimality condition of \eqnok{def_prox_mapping_composite} (see, e.g.,
Lemma 1 of \cite{Lan10-3}) and \eqnok{eqn_prox_composite} that
\begin{align}
\langle {G_{i_k } (x_k ,\xi _k ),\rho_{i_k} } \rangle
+ \chi _{i_k } (x_{k + 1}^{(i_k )} ) \le
\langle { G_{i_k } (x_k ,\xi _k ),x^{(i_k )}  - x_k^{(i_k )} } \rangle
+  \chi _{i_k } (x^{(i_k )} )  \nn \\
+ \frac{1}{{\gamma _k }}\left[ {V_{i_k} (x_k,x) - V_{i_k} (x_{k + 1},x) - V_{i_k } (x_{k + 1},x_k )} \right]. \nn
\end{align}
Combining the above two inequalities and using \eqnok{separable},
we obtain
\begin{align}
 \phi (x_{k+1}) 
&\le  f(x_k ) + \left\langle { G_{i_k } (x_k ,\xi _k ),x^{(i_k )}  - x_k^{(i_k )} } \right\rangle   +  \chi _{i_k } (x^{(i_k )} )
+ \frac{1}{{\gamma _k }}\left[ {V_{i_k} (x_k,x) - V_{i_k} (x_{k + 1},x) - V_{i_k } (x_{k + 1},x_k )} \right] \nn \\
 &  \qquad  + \frac{{L_{i_k } }}{2}\left\| {\rho_{i_k} } \right\|_{i_k}^2
 + \sum\limits_{i \ne i_k } { \chi _{i} (x_{k + 1}^{(i)} )}
 - \langle \delta_{i_k},\rho_{i_k}  \rangle . \label{rel_smooth_rec}
\end{align}
Noting that by the strong convexity of $\w_i(\cdot)$,  the Young's inequality, and \eqnok{gamma_composite}, we have
\begin{align*}
- \frac{1}{{\gamma _k }} V_{i_k } (x_{k + 1},x_k ) + \frac{{L_{i_k } }}{2}\left\| {\rho_{i_k} } \right\|_{i_k}^2 - \langle \delta_{i_k},\rho_{i_k}  \rangle
\le  - \left( {\frac{{1 }}{{2\gamma _k }} - \frac{{L_{i_k } }}{2}} \right)\left\| {\rho_{i_k} } \right\|_{i_k}^2  - \langle \delta_{i_k},\rho_{i_k}  \rangle \\
\le \frac{{\gamma _k  \left\| {\delta_{i_k} } \right\|_* ^2 }}{{2(1  - \gamma _k L_{i_k } )}}
\le \frac{{\gamma _k  \left\| {\delta_{i_k} } \right\|_* ^2 }}{{2(1  - \gamma _k \bar L )}}
\le \gamma _k  \left\| {\delta_{i_k} } \right\|_* ^2.
\end{align*}
Also observe that by the definition of $x_{k+1}$ in \eqnok{eqn_prox_composite}, \eqref{eqn_prox}, and the definition of $V(\cdot,\cdot)$,
we have $\sum\limits_{i \ne i_k } { \chi _{i} (x_{k + 1}^{(i)} )} = \sum\limits_{i \ne i_k } { \chi _{i} (x_{k}^{(i)} )}$
and ${V_{i_k } (x_k,x ) - V_{i_k } (x_{k + 1} ,x )} =[V (x_k,x ) - V (x_{k + 1} ,x )]/b$. Using these observations,
we conclude from \eqnok{rel_smooth_rec} that
 \begin{align}
   \phi (x_{k+1}) 
   &\le  f(x_k ) + \left\langle { G_{i_k } (x_k ,\xi _k ),x^{(i_k )}  - x_k^{(i_k )} } \right\rangle
   + \frac{1}{{b \gamma _k }}\left[ {V (x_k,x ) - V (x_{k + 1} ,x )} \right]  \nn \\
  & \qquad + \gamma _k  \left\| {\delta_{i_k} } \right\|_* ^2  + \sum\limits_{i \ne i_k } { \chi_{i} (x_k^{(i)} )}  +  \chi_{i_k} (x^{(i_k )} ).
  \label{rel_smooth_main}
\end{align}
Now noting that
\begin{align}
\bbe_{\zeta_{k}}\left[\left\langle { G_{i_k } (x_k ,\xi _k ),x^{(i_k )}  - x_k^{(i_k )} } \right\rangle | \zeta_{[k-1]}\right]
& = \frac{1}{b} \sum_{i=1}^b \bbe_{\xi_k} \left[\left\langle { G_{i } (x_k ,\xi _k ),x^{(i )}  - x_k^{(i )} } \right\rangle | \zeta_{[k-1]}\right] \nn\\
&= \frac{1}{b} \langle g(x_k), x - x_k \rangle \le \frac{1}{b} [f(x) - f(x_k)], \label{exp_g_smooth}\\
\bbe_{\zeta_{k}}\left[\left\| {\delta_{i_k} } \right\|_* ^2  | \zeta_{[k-1]}\right]
&= \frac{1}{b} \sum_{i=1}^b \bbe_{\xi_k} \left[\|G_i(x_k, \xi_k) - g_i(x_k)\|_{i,*}^2 | \zeta_{[k-1]}\right]
\le \frac{1}{b} \sum_{i=1}^b \sigma_i^2 = \frac{\sigma^2}{b}, \label{exp_delta_smooth} \\
\bbe_{\zeta_{k}}\left[\sum\limits_{i \ne i_k } { \chi_{i} (x_k^{(i)} )}  | \zeta_{[k-1]}\right]
&= \frac{1}{b} \sum_{j=1}^b \sum\limits_{i \ne j } { \chi_{i} (x_k^{(i)} )}
= \frac{b-1}{b} \chi (x_k), \label{exp_chi_smooth}\\
\bbe_{\zeta_{k}}\left[\chi_{i_k} (x^{(i_k )} )| \zeta_{[k-1]}\right]
&= \frac{1}{b} \sum_{i=1}^b \chi_i(x^{(i)} = \frac{1}{b} \chi(x), \label{exp_chi_i_smooth}
\end{align}
we conclude from \eqnok{rel_smooth_main} that
  \begin{align}
   \bbe_{\zeta_k}\left[\phi (x_{k+1}) + \frac{1}{{b\gamma _k } V (x_{k + 1},x )} |\zeta_{[k-1]}\right] &\le  f(x_k ) + \frac{1}{b}[f(x) - f(x_k)]
   + \frac{1}{b} \chi (x) + \frac{1}{{b\gamma _k }}\left[ {V (x_k ,x ) } \right] \nn \\
   &\qquad + \frac{\gamma_k}{b} \sigma^2 + \frac{b -1}{b} \chi(x_k) + \frac{1}{b} \chi(x) \nn\\
   &=  \frac{b-1}{b}\phi(x_k ) + \frac{1}{b}\phi(x)
   + \frac{1}{{b\gamma _k }}\left[ {V (x_k ,x ) - V (x_{k + 1},x )} \right]
   + \frac{\gamma_k}{b} \sigma^2, \nn
\end{align}
which implies that
\beq \label{recursion_composite1}
b\gamma_k \bbe[\phi (x_{k+1})- \phi (x)] + \bbe[V (x_{k + 1},x )]\le (b-1)\gamma_k \bbe[\phi(x_k ) -\phi(x)]
+ \bbe\left[ {V (x_k ,x )} \right] + \gamma_k^2 \sigma^2.
\eeq
Now, summing up the above inequalities (with $x = x^*$) for $k=1,\ldots,N$ ,
and noting that $\theta_{k+1}=b\gamma_k-(b-1)\gamma_{k+1}$,
we obtain
$$
 \sum_{k=2}^{N}\theta_{k}\bbe[\phi (x_{k})- \phi (x^*)] + b \gamma_N \bbe[\phi (x_{N+1})- \phi (x^*)] + \bbe[V (x_{N + 1},x )]
 \le (b-1)\gamma_1[\phi(x_1 ) -\phi(x^*)]   + V (x_1 ,x^* )
 + \sigma^2 \sum_{k=1}^N \gamma_k^2 ,
$$
Using the above inequality and the facts that $ V(\cdot,\cdot) \ge 0$ and $\phi(x_{N+1}) \ge \phi(x^*)$,
we conclude
\[
 \sum_{k=2}^{N+1}\theta_{k}\bbe[\phi (x_{k})- \phi (x^*)]  \le (b-1)\gamma_1[\phi(x_1 ) -\phi(x^*)]   + V (x_1 ,x^* )
 + \sigma^2 \sum_{k=1}^N \gamma_k^2 ,
\]
which, in view of \eqnok{def_sigma}, \eqnok{def_bar_x_composite} and the convexity of $\phi(\cdot)$,
clearly implies \eqnok{main_result_smooth}.
\end{proof}

\vgap

The following corollary describes a specialized convergence result of Algorithm~\ref{algSBMD2}
for solving convex stochastic composite optimization problems after
properly selecting $\{\gamma_k\}$.

\begin{corollary} \label{co_composite}
Suppose that $\{p_i\}$ in Algorithm~\ref{algSBMD2} are set to \eqref{uniform}.
Also assume that $\{\gamma_k\}$ are set to
\beq\label{def_gamma_composite}
\gamma_k=\gamma=\min \left \{ \frac{1}{2 \bar L}, \frac{\tilde D}{\sigma}\sqrt{\frac{b}{N}}\right \}
\eeq
for some $\tilde D > 0$, and $\{\theta_k\}$ are set to \eqnok{def_theta_composite}.
Then, under Assumptions \eqref{assump1} and \eqref{assump2.a}, we have
\begin{align}
\bbe\left[ {\phi (\bar x_N ) - \phi(x^* )} \right] &\le \frac{(b-1)[\phi(x_1 ) - \phi(x^* )]}{N}
+ \frac{2 b \bar L \sum_{i=1}^bV_i(x_1 ,x^* )}{N} \nn \\
& \qquad + \frac{ \sigma \sqrt{b} }{ \sqrt{N}} \left[ \frac{\sum_{i=1}^b V_i(x_1 ,x^* )}{\tilde D} + \tilde D\right].
\label{composite_key}
\end{align}
where $x^*$ is the optimal solution of problem \eqnok{spc}.
\end{corollary}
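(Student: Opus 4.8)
The plan is to specialize the bound \eqref{main_result_smooth} of Theorem~\ref{theorem_composite} to the constant stepsize policy \eqref{def_gamma_composite} and then balance the two error terms with the standard $\min$-argument. First I would note that the choice in \eqref{def_gamma_composite} satisfies $\gamma_k \le 1/(2\bar L)$, so the hypothesis \eqref{gamma_composite} of Theorem~\ref{theorem_composite} holds and the theorem applies. Next, since $\gamma_k \equiv \gamma$, the weight recursion \eqref{def_theta_composite} degenerates to $\theta_{k+1} = b\gamma - (b-1)\gamma = \gamma$ for every $k \ge 1$; hence $\theta_k = \gamma$ for all $k \ge 2$ (while $\theta_1 = 0$), which gives $\sum_{k=2}^{N+1}\theta_k = N\gamma$ and $\sum_{k=1}^N \gamma_k^2 = N\gamma^2$.

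Substituting these identities into \eqref{main_result_smooth} and distributing the factor $(N\gamma)^{-1}$ yields
\[
\bbe[\phi(\bar x_N) - \phi(x^*)] \le \frac{(b-1)[\phi(x_1)-\phi(x^*)]}{N} + \frac{b\sum_{i=1}^b V_i(x_1,x^*)}{N\gamma} + \sigma^2 \gamma .
\]
To conclude I would use $\gamma = \min\{1/(2\bar L),\, (\tilde D/\sigma)\sqrt{b/N}\}$ in two ways: from $\gamma \le (\tilde D/\sigma)\sqrt{b/N}$ we get $\sigma^2\gamma \le \sigma \tilde D \sqrt{b/N}$, and from $1/\gamma = \max\{2\bar L,\, (\sigma/\tilde D)\sqrt{N/b}\} \le 2\bar L + (\sigma/\tilde D)\sqrt{N/b}$ we get $b\sum_i V_i/(N\gamma) \le 2b\bar L\sum_i V_i/N + \sigma\sqrt b \sum_i V_i/(\tilde D\sqrt N)$. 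Adding the three resulting terms and grouping the $\sigma\sqrt{b}/\sqrt{N}$ contributions produces exactly \eqref{composite_key}.

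The argument is purely computational, so there is no genuine obstacle; the only thing worth double-checking is that the telescoping weights become constant under a constant stepsize and that the prescribed $\theta_1 = 0$ causes no loss, which is immediate because $\bar x_N$ in \eqref{def_bar_x_composite} averages only $x_2,\dots,x_{N+1}$. As a sanity check, when $\sigma = 0$ one takes $\gamma = 1/(2\bar L)$, the stochastic terms vanish, and \eqref{composite_key} reduces to an ${\cal O}(b\bar L/N)$ rate for the deterministic composite problem.
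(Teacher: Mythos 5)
Your proposal is correct and follows essentially the same route as the paper: specialize Theorem~\ref{theorem_composite} with $\theta_k=\gamma_k=\gamma$, obtain the intermediate bound $\frac{(b-1)[\phi(x_1)-\phi(x^*)]}{N}+\frac{b\sum_i V_i(x_1,x^*)}{N\gamma}+\gamma\sigma^2$, and then invoke the definition of $\gamma$ via the two inequalities $\gamma\le(\tilde D/\sigma)\sqrt{b/N}$ and $1/\gamma\le 2\bar L+(\sigma/\tilde D)\sqrt{N/b}$. The paper compresses that last step into ``in view of \eqnok{def_gamma_composite}''; you have simply written it out.
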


\begin{proof}
It follows from \eqnok{def_theta_composite} and \eqnok{def_gamma_composite}
that $\theta_k = \gamma_k =\gamma$, $k = 1, \ldots, N$. Using this observation and
Theorem~\ref{theorem_composite}, we obtain
$$\bbe\left[ {\phi (\bar x_N ) - \phi(x^* )} \right] \le  \frac{(b-1)[\phi(x_1 ) - \phi(x^* )]}{N} + \frac{b \sum_{i=1}^bV_i(x_1 ,x^* )}{N\gamma} + \gamma\sigma^2, $$
which, in view of \eqnok{def_gamma_composite}, then implies \eqnok{composite_key}.
\end{proof}

\vgap

We now add a few remarks about the results obtained in Corollary~\ref{co_composite}.
First, in view of \eqref{composite_key}, an optimal selection of $\tilde D$ would be
$\sqrt{\sum_{i=1}^b V_i(x_1 ,x^* )}$. In this case, \eqref{composite_key}
reduces to
\begin{align}
\bbe\left[ {\phi (\bar x_N ) - \phi(x^* )} \right] &\le \frac{(b-1)[\phi(x_1 ) - \phi(x^* )]}{N}
+ \frac{2 b \bar L \sum_{i=1}^bV_i(x_1 ,x^* )}{N}
+ \frac{ 2 \sigma \sqrt{b} \sqrt{\sum_{i=1}^b {\cal D}_i}}{ \sqrt{N}} \nn \\
&\le \frac{(b-1)[\phi(x_1 ) - \phi(x^* )]}{N}
+ \frac{2 b \bar L \sum_{i=1}^b{\cal D}_i}{N}
+ \frac{ 2 \sigma \sqrt{b} \sqrt{\sum_{i=1}^b {\cal D}_i}}{ \sqrt{N}} \label{temp_comp1}.
\end{align}
Second, if we directly apply Algorithm~\ref{algSBMD} to problem \eqnok{spc},
then, in view of \eqnok{nonsmooth_bnd_simple} and \eqnok{def_smoothM}, we have
\begin{align}
\bbe[\phi(\bar x_N ) - \phi(x^*)] &\le 2 \sqrt{ \sum \limits_{i=1}^b
\left[4 L_i^2 {\cal D}_i + 2 \|g_i(x_1)\|_{i,*}^2
+ \sigma_i^2\right] }
\frac{\sqrt{b} \sqrt{\sum_{i=1}^b {\cal D}_i}}{\sqrt{N}} \nn\\
&\le \frac{4 b \bar L \sum_{i=1}^b{\cal D}_i}{\sqrt{N}}
+ 2 \sqrt{\sum \limits_{i=1}^b  \left(2 \|g_i(x_1)\|_{i,*}^2
+ \sigma_i^2\right)}\frac{ \sqrt{b}  \sqrt{\sum_{i=1}^b {\cal D}_i}}{\sqrt{N}}.
\label{temp_comp2}
\end{align}
Clearly, the bound in \eqnok{temp_comp1} has a much weaker dependence
on the Lipschitz constant $\bar L$ than the one in \eqnok{temp_comp2}. In particular,
we can see that $\bar L$ can be as large as ${\cal O} (\sqrt{N})$ without affecting
the bound in \eqnok{temp_comp1}, after disregarding some other constant factors.


\vgap

In the remaining part of this section, we
consider the case when the objective function is strongly convex, i.e.,
the function $f(\cdot)$ in \eqref{spc} satisfies \eqref{def_strong_convexity}.
Similar to the previous section, we also assume that the prox-functions $V_i(\cdot,\cdot)$, $i = 1, \ldots, b$, satisfy
the quadratic growth condition \eqref{q-growth}.
The following theorem describes some convergence properties of the SBMD algorithm
for solving strongly convex composite problems.

\begin{theorem} \label{composite_theorem_strongly}
Suppose that \eqnok{def_strong_convexity},
\eqnok{q-growth}, and \eqnok{uniform} hold.
Also assume that the parameters $\{\gamma_k\}$ and $\{\theta_k\}$
are chosen such that for any $k \ge 1$,
\begin{align}
\gamma_k \le \min\left\{\frac{1}{2 \bar L}, \frac{bQ}{\mu}\right\}, \label{def_theta_strong_composite0}
\end{align}
\begin{align}
\theta_{k+1} =
\frac{b\gamma_k}{\Gamma_k}-\frac{(b-1)\gamma_{k+1}}{\Gamma_{k+1}} \ \ \
\mbox{with} \ \ \
\Gamma_k = \begin{cases}
1 & k = 1\\
\Gamma_{k-1} (1 - \frac{\gamma_k \mu}{bQ}) & k \ge 2.\label{def_theta_strong_composite}
\end{cases}
\end{align}
Then, for any $N \ge 2$, we have
\beq  \label{result_strong_composite}
\bbe [\phi(\bar x_N)-\phi(x^*)] \le \left[\sum_{k=2}^{N+1}\theta_{k}\right]^{-1} \left[\Big(b-\mu\gamma_1 Q \Big)\sum_{i=1}^bV_i(x_1,x^*)+(b-1)\gamma_1[\phi(x_1)-\phi(x^*)]+\sum_{k=1}^N\frac{\gamma_k^2}{\Gamma_k}\sigma^2\right],
\eeq
where $x^*$ is the optimal solution of problem \eqnok{spc}.
\end{theorem}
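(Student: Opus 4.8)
The proof will follow the template of Theorem~\ref{theorem_composite}, inserting the strong-convexity term exactly where the bare convexity of $f$ was used there, and then iterating the resulting recursion with the $\Gamma_k$-weights in the manner of Theorem~\ref{theorem_strongly}. First I would reuse the one-step estimate \eqnok{rel_smooth_main} verbatim (it invokes only the smoothness \eqnok{smooth_f1}, the optimality condition for \eqnok{def_prox_mapping_composite}, Young's inequality together with $\gamma_k\le 1/(2\bar L)$ from \eqnok{def_theta_strong_composite0}, and the separability of $\chi$ --- no convexity of $f$ yet). Taking the conditional expectation $\bbe_{\zeta_k}[\cdot\mid\zeta_{[k-1]}]$ as in \eqnok{exp_delta_smooth}--\eqnok{exp_chi_i_smooth}, but replacing \eqnok{exp_g_smooth} by the stronger consequence of \eqnok{def_strong_convexity},
\[
\bbe_{\zeta_k}\!\left[\langle G_{i_k}(x_k,\xi_k), x^{(i_k)}-x_k^{(i_k)}\rangle\mid\zeta_{[k-1]}\right] = \frac{1}{b}\langle g(x_k), x-x_k\rangle \le \frac{1}{b}\left[f(x) - f(x_k) - \frac{\mu}{2}\|x-x_k\|^2\right],
\]
produces the extra term $-\frac{\mu}{2b}\|x-x_k\|^2$. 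Using \eqnok{q-growth} and \eqnok{uniform} exactly as in \eqnok{q-growth-1}, i.e. $V(x_k,x)\le\frac{bQ}{2}\|x_k-x\|^2$, I would bound $-\frac{\mu}{2b}\|x-x_k\|^2\le-\frac{\mu}{b^2Q}V(x_k,x)$ and merge it with the $\frac{1}{b\gamma_k}V(x_k,x)$ term already present, obtaining the analogue of \eqnok{recursion_composite1}:
\[
b\gamma_k\,\bbe[\phi(x_{k+1})-\phi(x)] + \bbe[V(x_{k+1},x)] \le (b-1)\gamma_k\,\bbe[\phi(x_k)-\phi(x)] + \left(1-\frac{\gamma_k\mu}{bQ}\right)\bbe[V(x_k,x)] + \gamma_k^2\sigma^2,
\]
the only change from \eqnok{recursion_composite1} being the contraction factor in front of $\bbe[V(x_k,x)]$.

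Next I would iterate this recursion. Writing it as $\Delta_{k+1}\le(1-a_k)\Delta_k+B_k$ with $\Delta_k=\bbe[V(x_k,x)]$, $a_k=\gamma_k\mu/(bQ)$ and $B_k=(b-1)\gamma_k\bbe[\phi(x_k)-\phi(x)]-b\gamma_k\bbe[\phi(x_{k+1})-\phi(x)]+\gamma_k^2\sigma^2$, and noting that $\gamma_k\le bQ/\mu$ from \eqnok{def_theta_strong_composite0} guarantees $a_k\in(0,1]$ and $\Gamma_k>0$, Lemma~\ref{tech_result_sum} with $A_k=\Gamma_k$ together with $V(x_{N+1},x)\ge 0$ give
\[
0 \le \left(1-\frac{\gamma_1\mu}{bQ}\right)\bbe[V(x_1,x)] + \sum_{k=1}^N\frac{1}{\Gamma_k}\left[(b-1)\gamma_k\bbe[\phi(x_k)-\phi(x)] - b\gamma_k\bbe[\phi(x_{k+1})-\phi(x)] + \gamma_k^2\sigma^2\right].
\]
Shifting the index in the $-b\gamma_k\Gamma_k^{-1}\bbe[\phi(x_{k+1})-\phi(x)]$ terms and collecting, the coefficient multiplying $\bbe[\phi(x_k)-\phi(x)]$ for $2\le k\le N$ is exactly $\frac{b\gamma_{k-1}}{\Gamma_{k-1}}-\frac{(b-1)\gamma_k}{\Gamma_k}=\theta_k$ by \eqnok{def_theta_strong_composite}, while the endpoint contributions are $(b-1)\gamma_1\bbe[\phi(x_1)-\phi(x)]$ (using $\Gamma_1=1$) and $\frac{b\gamma_N}{\Gamma_N}\bbe[\phi(x_{N+1})-\phi(x)]$. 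Setting $x=x^*$, using $\phi(x_{N+1})-\phi(x^*)\ge 0$ to replace $\frac{b\gamma_N}{\Gamma_N}$ by the no-larger $\theta_{N+1}=\frac{b\gamma_N}{\Gamma_N}-\frac{(b-1)\gamma_{N+1}}{\Gamma_{N+1}}$, and recalling $V(x_1,x^*)=b\sum_{i=1}^b V_i(x_1,x^*)$ from \eqnok{uniform}, I would arrive at
\[
\sum_{k=2}^{N+1}\theta_k\,\bbe[\phi(x_k)-\phi(x^*)] \le \left(b-\frac{\mu\gamma_1}{Q}\right)\sum_{i=1}^b V_i(x_1,x^*) + (b-1)\gamma_1[\phi(x_1)-\phi(x^*)] + \sigma^2\sum_{k=1}^N\frac{\gamma_k^2}{\Gamma_k}.
\]
Dividing by $\sum_{k=2}^{N+1}\theta_k$ and using the convexity of $\phi$ together with $\bar x_N=\left(\sum_{k=2}^{N+1}\theta_k\right)^{-1}\sum_{k=2}^{N+1}\theta_k x_k$ from \eqnok{def_bar_x_composite} then yields \eqnok{result_strong_composite}.

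The only genuinely new ingredient over Theorem~\ref{theorem_composite} is the substitution turning $f(x)-f(x_k)$ into $f(x)-f(x_k)-\frac{\mu}{2}\|x-x_k\|^2$ and its conversion, via the quadratic growth condition, into the contraction factor $1-\gamma_k\mu/(bQ)$; everything else is a recombination of steps already established in Theorems~\ref{theorem_composite} and \ref{theorem_strongly}. I expect the one delicate point to be the telescoping bookkeeping: making the $\Gamma_k$-normalized coefficients of $\bbe[\phi(x_k)-\phi(x)]$ collapse exactly to the prescribed $\theta_k$, treating the $k=1$ endpoint (where $\theta_1=0$ and $\Gamma_1=1$) correctly, and justifying the replacement of $\frac{b\gamma_N}{\Gamma_N}$ by $\theta_{N+1}$, which hinges on $\phi(x_{N+1})-\phi(x^*)\ge 0$. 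It is also worth noting that the two conditions in \eqnok{def_theta_strong_composite0} play distinct roles: $\gamma_k\le 1/(2\bar L)$ is used in the one-step estimate to control the stochastic-noise term, whereas $\gamma_k\le bQ/\mu$ is used to keep $\Gamma_k>0$ so that the iteration and telescoping are legitimate.
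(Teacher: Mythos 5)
Your proposal is correct and follows essentially the same route as the paper's own proof: strengthen \eqnok{exp_g_smooth} via \eqnok{def_strong_convexity}, convert the $-\frac{\mu}{2b}\|x-x_k\|^2$ term into the contraction factor $1-\gamma_k\mu/(bQ)$ through \eqnok{q-growth-1}, iterate with Lemma~\ref{tech_result_sum}, and telescope using \eqnok{def_theta_strong_composite} together with $\phi(x_{N+1})-\phi(x^*)\ge 0$ and $V(x_{N+1},x^*)\ge 0$. Your constant $\bigl(b-\mu\gamma_1/Q\bigr)$ is in fact what the paper's own derivation produces (the $\bigl(b-\mu\gamma_1 Q\bigr)$ in the theorem statement appears to be a typographical slip), so no correction is needed.
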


\begin{proof}
Observe that by the strong convexity of $f(\cdot)$, the relation in \eqnok{exp_g_smooth} can be strengthened
to
\[
\bbe_{\zeta_{k}}\left[\left\langle { G_{i_k } (x_k ,\xi _k ),x^{(i_k )}  - x_k^{(i_k )} } \right\rangle | \zeta_{[k-1]}\right]
= \frac{1}{b} \langle g(x_k), x - x_k \rangle \le \frac{1}{b} [f(x) - f(x_k) - \frac{\mu}{2} \|x - x_k\|^2].
\]
Using this observation, \eqnok{exp_delta_smooth}, \eqnok{exp_chi_smooth}, and \eqnok{exp_chi_i_smooth},
we conclude from \eqnok{rel_smooth_main} that
\begin{align*}
 \bbe_{\zeta_k}\left[\phi (x_{k+1}) + \frac{1}{{b\gamma _k }} V (x_{k + 1},x ) | \zeta_{[k-1]}\right] 
   &\le  f(x_k)+ \frac{1}{b}\Big[f(x)-f(x_k)-\frac{\mu}{2}\| x-x_k\|^2 \Big]
   + \frac{1}{{b\gamma _k }} {V (x_k ,x )} \\
   & \qquad  + \frac{\gamma_k}{b} \sigma^2 + \frac{b-1}{b} \chi(x_k) + \frac{1}{b} \chi (x)\\
&\le  \frac{b-1}{b}\phi(x_k)+\frac{1}{b}\phi(x)+ \Big(\frac{1}{b\gamma_k}-\frac{\mu }{b^2Q}\Big)V(x_k,x)
+ \frac{\gamma_k}{b} \sigma^2,
\end{align*}
where the last inequality follows from \eqref{q-growth-1}. By taking expectation w.r.t. $\xi_{[k-1]}$
on both sides of the above inequality, we conclude that, for any $ k \ge 1$,
$$
\bbe[V(x_{k+1},x^*)]\le \Big(1-\frac{\mu \gamma_k}{bQ}\Big) \bbe[ V(x_k,x^*)]+
(b-1)\gamma_k\bbe[\phi(x_k)-\phi(x^*)]-b\gamma_k \bbe \left[ {\phi(x_{k + 1} )} -\phi(x^*)\right]+  \gamma_k^2 \sigma^2,
$$
which, in view of Lemma~\ref{tech_result_sum} (with
$a_k = 1 - \gamma_k \mu /(bQ)$ and $A_k = \Gamma_k$ and $B_k=(b-1)\gamma[\phi(x_k)-\phi(x^*)]-b\gamma_kE \left[ {\phi (x_{k + 1} )} -\phi(x^*)\right]+\gamma_k^2\sigma^2$),
then implies that
\begin{align*}
\frac{1}{\Gamma_N}[V(x_{k+1},x^*)] &\le (1-\frac{\mu\gamma_1 }{bQ})V(x_1,x^*)+(b-1)\sum_{k=1}^N\frac{\gamma_k}{\Gamma_k}[\phi(x_k)-\phi(x^*)]\\
& \qquad -b\sum_{k=1}^N\frac{\gamma_k}{\Gamma_k}[\phi(x_{k+1})-\phi(x^*)]
+\sum_{k=1}^N\frac{\gamma_k^2}{\Gamma_k} \sigma^2 \\
&\le (1-\frac{\mu\gamma_1 }{bQ})V(x_1,x^*)+ (b-1)\gamma_1[\phi(x_1)-\phi(x^*)] \\
& \qquad - \sum_{k=2}^{N+1} \theta_k [\phi(x_k)-\phi(x^*)]+\sum_{k=1}^N\frac{\gamma_k^2}{\Gamma_k} \sigma^2,
\end{align*}
where the last inequality follows from \eqnok{def_theta_strong_composite}
and the fact that $\phi(x_{N+1}) - \phi(x^*) \ge 0$.
Noting that $V(x_{N+1},x^*) \ge 0,$ we conclude from the above inequality that
$$\sum_{k=2}^{N+1}\theta_{k}\bbe[\phi(x_{k})-\phi(x^*)] \le (1-\frac{\mu\gamma_1 }{bQ})V(x_1,x^*)+(b-1)\gamma_1[\phi(x_1)-\phi(x^*)]
+\sum_{k=1}^N\frac{\gamma_k^2}{\Gamma_k} \sigma^2.$$
Our result immediately follows from the above inequality,
the convexity of $\phi(\cdot)$, and \eqnok{def_bar_x_composite}.
\end{proof}

\vgap

Below we specialize the rate of convergence
of the SBMD method for solving strongly convex composite problems
with a proper selection of $\{\gamma_k\}$ .

\begin{corollary} \label{composite_stronglyconvex}
Suppose that \eqnok{def_strong_convexity}, \eqnok{q-growth},
and \eqnok{uniform} hold. Also assume that $\{\theta_k\}$ are set to \eqnok{def_theta_strong_composite} and
\beq\label{def_gamma_strong_composite}
\gamma_k = 2bQ/(\mu (k+k_0)) \ \ \forall k \ge 1,
\eeq where
$$k_0:=\left\lfloor {\frac{4bQ \bar L}{\mu}} \right\rfloor.$$
Then, for any $N \ge 2$, we have
\beq \label{result_strong1_composite}
\bbe [\phi(\bar x_N) - \phi(x^*)] \le \frac{\mu Q k_0^2}{N(N+1)}\sum_{i=1}^bV_i(x_1 ,x^* )
+ \frac{2Q (b-1) k_0 }{N (N+1)}[\phi(x_1)-\phi(x^*)] + \frac{4 b\sigma^2}{\mu Q (N+1)},
\eeq
where $x^*$ is the optimal solution of problem~\eqnok{spc}.
\end{corollary}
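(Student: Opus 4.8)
The plan is to specialize Theorem~\ref{composite_theorem_strongly} with the stepsize choice \eqnok{def_gamma_strong_composite}, so the work is entirely a matter of evaluating the recursively-defined quantities $\Gamma_k$, $\theta_k$, $\sum_{k=2}^{N+1}\theta_k$, and $\sum_{k=1}^N \gamma_k^2/\Gamma_k$ in closed form (or bounding them), then substituting into \eqnok{result_strong_composite}. First I would verify that the choice $\gamma_k = 2bQ/(\mu(k+k_0))$ satisfies the standing requirement \eqnok{def_theta_strong_composite0}: the bound $\gamma_k \le bQ/\mu$ holds for all $k \ge 1$ since $k + k_0 \ge 2$, and the bound $\gamma_k \le 1/(2\bar L)$ holds precisely because $k_0 = \lfloor 4bQ\bar L/\mu \rfloor$ forces $k + k_0 \ge 1 + k_0 \ge 4bQ\bar L/\mu$ (using $k\ge 1$), which rearranges to $2bQ/(\mu(k+k_0)) \le 1/(2\bar L)$.

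Next I would compute $\Gamma_k$. From \eqnok{def_theta_strong_composite}, $1 - \gamma_k\mu/(bQ) = 1 - 2/(k+k_0) = (k+k_0-2)/(k+k_0)$, so the telescoping product gives $\Gamma_k = \prod_{j=2}^k \frac{j+k_0-2}{j+k_0} = \frac{(k_0-1)k_0}{(k+k_0-1)(k+k_0)}$ for $k \ge 1$ (this should be cross-checked against $\Gamma_1 = 1$, which forces the normalization). Then $\theta_{k+1} = b\gamma_k/\Gamma_k - (b-1)\gamma_{k+1}/\Gamma_{k+1}$; plugging in $\gamma_k/\Gamma_k = \frac{2bQ}{\mu(k+k_0)}\cdot\frac{(k+k_0-1)(k+k_0)}{(k_0-1)k_0} = \frac{2bQ(k+k_0-1)}{\mu(k_0-1)k_0}$, one gets a clean linear-in-$k$ expression for $\theta_k$. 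Summing $\sum_{k=2}^{N+1}\theta_k$ telescopes back to $b\gamma_N/\Gamma_N - (b-1)\gamma_1/\Gamma_1 + \theta_1$-type boundary terms, or one can just sum the explicit linear formula; either way it should come out proportional to $N(N+1)$ over $\mu Q k_0^2$ (matching the denominators appearing in \eqnok{result_strong1_composite}). Finally, $\sum_{k=1}^N \gamma_k^2/\Gamma_k = \sum_{k=1}^N \frac{4b^2Q^2}{\mu^2(k+k_0)^2}\cdot\frac{(k+k_0-1)(k+k_0)}{(k_0-1)k_0}$, which simplifies to a constant times $\sum_{k=1}^N \frac{k+k_0-1}{k+k_0}$; this sum is at most $N$, giving an $O(b^2Q^2 N/(\mu^2 k_0^2))$ bound on the $\sigma^2$ term that, after dividing by $\sum\theta_k \sim N(N+1)$, yields the $4b\sigma^2/(\mu Q(N+1))$ term (here $k_0 \approx 4bQ\bar L/\mu$ is used to convert constants, and one should double-check that the resulting constant is genuinely $\le 4$ as claimed).

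The remaining two terms in \eqnok{result_strong1_composite} come directly: the coefficient $b - \mu\gamma_1 Q$ multiplying $\sum_i V_i(x_1,x^*)$, once divided by $\sum_{k=2}^{N+1}\theta_k$, should produce the $\mu Q k_0^2/(N(N+1))$ factor — and here I would keep track of whether $b - \mu\gamma_1 Q$ needs bounding by $b$ or whether it is used as-is (it is likely bounded crudely by $b$, or the $k_0$ in the numerator absorbs it). The $(b-1)\gamma_1[\phi(x_1)-\phi(x^*)]$ term divided by $\sum\theta_k$ gives the middle term with its $2Q(b-1)k_0/(N(N+1))$ coefficient, using $\gamma_1 = 2bQ/(\mu(1+k_0))$ and $k_0/(1+k_0) \le 1$. \emph{The main obstacle} I anticipate is purely bookkeeping: getting the closed form for $\Gamma_k$ right (the off-by-one in the telescoping product of $\frac{j+k_0-2}{j+k_0}$ is easy to botch) and then making sure every constant that gets thrown away in an inequality is thrown away in the direction that still yields exactly the stated coefficients $\mu Q k_0^2$, $2Q(b-1)k_0$, and $4b$ — since the statement gives clean numbers, the floor in the definition of $k_0$ and the crude bounds must all conspire correctly. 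There is no conceptual difficulty; it is Corollary~\ref{co_stronglyconvex} redone with the extra composite/strong-convexity terms carried along.
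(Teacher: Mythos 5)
Your proposal is correct and follows essentially the same route as the paper: verify \eqnok{def_theta_strong_composite0}, compute $\Gamma_k$, $\theta_k$, $\sum_{k=2}^{N+1}\theta_k$ and $\sum_{k=1}^N\gamma_k^2/\Gamma_k$ in closed form, and substitute into \eqnok{result_strong_composite}. The one slip is the closed form $\Gamma_k=\frac{(k_0-1)k_0}{(k+k_0-1)(k+k_0)}$, which fails the normalization check $\Gamma_1=1$ that you yourself prescribed; the correct expression is $\Gamma_k=\frac{k_0(k_0+1)}{(k+k_0)(k+k_0-1)}$, and with that fix the remaining bookkeeping goes through exactly as you outline.
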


\begin{proof}
We can check that
\[
\gamma_k = \frac{2bQ}{\mu (k+\left\lfloor {\frac{4bQ \bar L}{\mu}} \right\rfloor)} \le \frac{1}{2 \bar L}.
\]
It can also be easily seen from the definition of $\gamma_k$ and \eqnok{def_theta_strong_composite}
that
\beq \label{temp_rel1_composite}
\Gamma_k = \frac{k_0 (k_0+1)}{(k+k_0)(k+k_0-1)}, \ \
1 - \frac{\gamma_1 \mu}{bQ} = \frac{k_0-1}{k_0+1}, \ \ \forall k \ge 1,
\eeq
\beq \label{temp_rel2_composite}
\theta_k = \frac{b\gamma_k}{\Gamma_k}-\frac{(b-1)\gamma_{k+1}}{\Gamma_{k+1}}
= \frac{2bkQ+2bQ(k_0-b)}{\mu k_0(k_0+1)} \ge \frac{2bk}{\mu Qk_0 (k_0+1)},
\eeq
and hence that
\beq \label{temp_rel3_composite}
\sum_{k=2}^{N+1} \theta_k \ge \frac{bQN (N+1)}{ \mu k_0 (k_0+1)}, \ \ \
\sum_{k=1}^N \frac{\gamma_k^2}{ \Gamma_k} = \frac{4b^2Q^2}{\mu^2 k_0(k_0+1)} \sum_{k=1}^N \frac{k+k_0-1}{k+k_0}
\le \frac{4Nb^2Q^2}{\mu^2k_0(k_0+1)} . \
\eeq
By using the above observations and \eqref{result_strong_composite}, we have
\begin{align*}
\bbe[\phi(\bar x_N)-\phi(x^*)] &\le \left(\sum_{k=2}^{N+1} \theta_k\right)^{-1}\left[\Big( 1-\frac{\mu\gamma_1}{bQ}\Big)V(x_1,x^*)+(b-1)\gamma_1[\phi(x_1)-\phi(x^*)]+\sum_{k=1}^N\frac{\gamma_k^2}{\Gamma_k}\sigma^2\right]. \cr
& \le \frac{\mu k_0(k_0+1)}{bQN(N+1)} \left[ \frac{k_0-1}{k_0+1}V(x_1,x^*)
+ \frac{2b(b-1)Q}{\mu(k_0+1)}[\phi(x_1)-\phi(x^*)] + \frac{4Nb^2Q^2 \sigma^2}{\mu^2 k_0(k_0+1)} \right] \cr
& \le \frac{\mu k_0^2}{bQN(N+1)}V(x_1,x^*) + \frac{2(b-1)k_0}{N (N+1)}[\phi(x_1)-\phi(x^*)] + \frac{4bQ \sigma^2}{\mu (N+1)},
\end{align*}
where the second inequality follows \eqref{temp_rel1_composite}, \eqref{temp_rel2_composite} and \eqref{temp_rel3_composite}.
\end{proof}

\vgap

It is interesting to observe that, in view of \eqref{result_strong1_composite} and the definition of $k_0$,
the Lipschitz constant $\bar L$ can be as large as ${\cal O}(\sqrt{N})$
without affecting the rate of convergence of the SBMD algorithm,
after disregarding other constant factors,  for solving strongly convex stochastic composite
optimization problems.

\setcounter{equation}{0}
\section{SBMD Algorithm for nonconvex composite optimization} \label{sec_nonconvex}
In this section we still consider composite optimization problems given in the form of \eqnok{spc}.
However, we assume that the smooth component $f(\cdot)$ is not necessarily convex, while
the nonsmooth component $\chi(\cdot)$ is still convex and separable (i.e., \eqnok{separable} holds).
In addition, we assume that the prox-functions satisfy the quadratic growth condition in \eqnok{q-growth}.
Our goal is to show that the SBMD algorithm, when employed with a certain randomization scheme,
can also be used to solve these nonconvex stochastic composite problems.

In order to discuss the convergence of the SBMD algorithm for
solving nonconvex composite problems, we need to first define an
appropriate termination criterion. Note that if $X = \bbr^n$ and
$\chi(x) = 0$, then a natural way to evaluate the quality of a
candidate solution $x$ will be $\|\nabla f(x)\|$. For more general
nonconvex composite problems, we introduce the notion of composite
projected gradient so as to evaluate the quality of a candidate
solution
(see~\cite{Nest04,LanMon13-1,LanMon09-1,DangLan12-1,GhaLanZhang13-1}
for some related discussions). More specifically, for a given $x
\in X$, $y \in \bbr^n$ and a constant $\gamma >0$, we define
$\PG(x, y, \gamma) \equiv (\PG_1(x, y, \gamma), \ldots, \PG_b(x, y, \gamma))$ by
\beq \label{proj_g}
\PG_i(x, y, \gamma) := \frac{1}{\gamma}[U_i^T x - \CP_i(U_i^T x, U_i^T y, \gamma)], \ \
i = 1, \ldots, b,
\eeq
where $\CP_i$ is defined in \eqnok{def_prox_mapping_composite}. In particular,
if $y = g(x)$, then we call $\PG(x, g(x), \gamma)$ the composite projected gradient
of $x$ w.r.t. $\gamma$. It can be easily seen that $\PG(x, g(x), \gamma) = g(x)$
when $X = \bbr^n$ and $\chi(x) = 0$.
Proposition~\ref{prop_PG} below relates the composite projected gradient to
the first-order optimality condition of the composite problem under a more general setting.


\begin{proposition} \label{prop_PG}
Let $x \in X$ be given and $\PG(x, y, \gamma)$ be defined as in \eqnok{proj_g}
for some $\gamma > 0$. Also let us denote $x^+ := x - \gamma \PG(x,g(x), \gamma)$.
Then there exists $p_i \in \partial \chi_i(U_i^T x^+)$ s.t.
\beq \label{app_stat_point}
 U_i^T g(x^+) + p_i \in -{\cal N}_{X_i}(U_i^T x^+) +
 {\cal B}_i\left((L_i + Q \gamma)\|\PG(x,g(x), \gamma)\|_i \right), \ \ i = 1, \ldots, b,
\eeq
where ${\cal B}_i(\epsilon) := \left\{ v \in \bbr^{n_i}: \|v\|_{i,*} \le \epsilon \right\}$
and ${\cal N}_{X_i}$ denotes the normal cone of $X_i$ at $U_i^T x$.
\end{proposition}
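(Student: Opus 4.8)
The plan is to obtain \eqnok{app_stat_point} directly from the first-order optimality condition of the convex subproblem that defines the composite prox-mapping $\CP_{i}$, viewing the replacement of $x$ by $x^{+}$ in the gradient and the replacement of $\nabla\omega_i(U_i^T x)$ by $\nabla\omega_i(U_i^T x^{+})$ in the prox (Bregman) term as two perturbations that get absorbed into the ball ${\cal B}_i$. Fix $i\in\{1,\dots,b\}$ and write $\bar x^{(i)}:=\CP_i\bigl(U_i^T x,\;U_i^T g(x),\;\gamma\bigr)$. By the definition \eqnok{proj_g} of $\PG_i$ and of $x^{+}$ one has the identities $U_i^T x^{+}=\bar x^{(i)}$ and $\gamma\,\PG_i(x,g(x),\gamma)=U_i^T x-\bar x^{(i)}$, so in particular $\|U_i^T(x^{+}-x)\|_i=\gamma\,\|\PG(x,g(x),\gamma)\|_i$. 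Since $\bar x^{(i)}$ minimizes over $X_i$ the convex function $z\mapsto\langle U_i^T g(x),z-U_i^T x\rangle+\tfrac{1}{\gamma}V_i(z,U_i^T x)+\chi_i(z)$, and $\nabla_z V_i(z,u)=\nabla\omega_i(z)-\nabla\omega_i(u)$, the optimality condition (e.g.,~Lemma~1 of \cite{Lan10-3}) supplies a subgradient $p_i\in\partial\chi_i(\bar x^{(i)})=\partial\chi_i(U_i^T x^{+})$ with
\[
U_i^T g(x)+\tfrac{1}{\gamma}\bigl(\nabla\omega_i(U_i^T x^{+})-\nabla\omega_i(U_i^T x)\bigr)+p_i\ \in\ -\,{\cal N}_{X_i}(U_i^T x^{+}).
\]

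With this $p_i$ fixed, the second step is the algebraic identity $U_i^T g(x^{+})+p_i=W_i+e_i$, where $W_i$ is the left-hand side of the display above (hence $W_i\in-{\cal N}_{X_i}(U_i^T x^{+})$) and
\[
e_i:=\bigl(U_i^T g(x^{+})-U_i^T g(x)\bigr)-\tfrac{1}{\gamma}\bigl(\nabla\omega_i(U_i^T x^{+})-\nabla\omega_i(U_i^T x)\bigr).
\]
Thus it only remains to show that $\|e_i\|_{i,*}$ does not exceed the radius in \eqnok{app_stat_point}, and I would bound the two parts of $e_i$ separately. For the gradient part, the block-Lipschitz property \eqnok{smooth_f} of the partial gradient gives $\|U_i^T g(x^{+})-U_i^T g(x)\|_{i,*}\le L_i\,\|U_i^T(x^{+}-x)\|_i=L_i\gamma\,\|\PG(x,g(x),\gamma)\|_i$. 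For the prox part, \eqnok{q-growth} says precisely that $\omega_i$ obeys the quadratic upper bound $\omega_i(z)\le\omega_i(u)+\langle\nabla\omega_i(u),z-u\rangle+\tfrac{Q}{2}\|z-u\|_i^2$; since $\omega_i$ is convex, the standard converse of the descent lemma (via the conjugate of $\omega_i$, equivalently co-coercivity of $\nabla\omega_i$) yields $\|\nabla\omega_i(z)-\nabla\omega_i(u)\|_{i,*}\le Q\,\|z-u\|_i$ for all $z,u\in X_i$, whence $\tfrac{1}{\gamma}\|\nabla\omega_i(U_i^T x^{+})-\nabla\omega_i(U_i^T x)\|_{i,*}\le\tfrac{Q}{\gamma}\,\|U_i^T x^{+}-U_i^T x\|_i=Q\,\|\PG(x,g(x),\gamma)\|_i$. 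Adding the two estimates through the triangle inequality bounds $\|e_i\|_{i,*}$ by a quantity of the form (constant)$\,\cdot\,\|\PG(x,g(x),\gamma)\|_i$ with the constant built from $L_i$, $Q$ and $\gamma$ as in \eqnok{app_stat_point}, which is the claimed inclusion.

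The two places I expect to require the most care are as follows. First, the implication ``\eqnok{q-growth} $\Rightarrow$ $\nabla\omega_i$ is $Q$-Lipschitz in $\|\cdot\|_i$'' is not purely formal: it genuinely uses convexity of $\omega_i$ and is most cleanly argued through the conjugate function (or co-coercivity), so I would either cite this fact or include the short proof. Second, because $x^{+}$ and $x$ differ in \emph{every} block, the estimate $\|U_i^T g(x^{+})-U_i^T g(x)\|_{i,*}\le L_i\|U_i^T(x^{+}-x)\|_i$ relies on the $i$-th partial gradient $g_i$ being controlled by the $i$-th block increment, i.e., on the exact form of the block-Lipschitz hypothesis \eqnok{smooth_f} in force in this section (if only a global Lipschitz bound were available one would instead route through $\bar L=\max_i L_i$ and $\|x^{+}-x\|$, with a coarser constant). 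Tracking the factors of $\gamma$ carefully through the composite prox-mapping \eqnok{def_prox_mapping_composite}, the definition \eqnok{proj_g}, and the optimality condition above is also where the precise radius in \eqnok{app_stat_point} gets pinned down; everything else is bookkeeping with the identities $U_i^T x^{+}=\bar x^{(i)}$ and $\gamma\PG_i=U_i^T x-\bar x^{(i)}$, together with the constraint qualification needed to split the subdifferential of the subproblem's objective into its $\partial\chi_i$ and ${\cal N}_{X_i}$ parts.
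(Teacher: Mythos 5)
Your proof follows essentially the same route as the paper's: extract the normal-cone inclusion from the optimality condition of the composite prox-mapping at $U_i^T x^+ = \CP_i(U_i^T x, U_i^T g(x),\gamma)$, then absorb the perturbation $e_i=\bigl(U_i^Tg(x^+)-U_i^Tg(x)\bigr)-\tfrac{1}{\gamma}\bigl(\nabla\omega_i(U_i^Tx^+)-\nabla\omega_i(U_i^Tx)\bigr)$ into the ball via the block-Lipschitz continuity of $g_i$ and the $Q$-Lipschitz continuity of $\nabla\omega_i$ implied by \eqnok{q-growth}. The radius you actually derive, $(\gamma L_i + Q)\|\PG(x,g(x),\gamma)\|_i$, is also what the paper's own computation produces (its displayed constant $(L_i+Q\gamma)$ appears to be a transcription slip for $(\gamma L_i + Q)$), and the two caveats you flag --- that convexity of $\omega_i$ is genuinely needed to upgrade \eqnok{q-growth} to Lipschitz continuity of $\nabla\omega_i$, and that \eqnok{smooth_f} only controls $g_i$ under single-block perturbations while $x^+$ changes every block --- are both passed over silently in the paper as well.
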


\begin{proof}
By the definition of $x^+$, \eqnok{def_prox_mapping_composite}, and \eqnok{proj_g}, we have
$
U_i^T x^+ = {\cal P}_i(U_i^T x,  U_i^T g(x), \gamma).
$
Using the above relation and the optimality condition of \eqnok{def_prox_mapping_composite}, we
conclude that there exists $p_i \in \partial \chi_i(U_i^T x^+)$ s.t.
\[
\langle U_i^T g(x) + \frac{1}{\gamma} \left[ \nabla \w_i(U_i^T x^+)  - \nabla \w_i(U_i^T x) \right]
+ p_i, u - U_i^T x^+ \rangle \ge 0, \ \ \ \forall u \in X_i.
\]
Now, denoting $\zeta = U_i^T [g(x) - g(x^+) + \frac{1}{\gamma} \left[ \nabla \w_i(U_i^T x^+)  - \nabla \w_i(U_i^T x^+)\right]$,
we conclude from the above relation that
$ U_i^T g(x^+) + p_i + \zeta \in -{\cal N}_{X_i}(U_i^T x^+)$.
Also noting that, by
$\|U_i^T[g(x^+) - g(x)]\|_{i,*} \le L_i \|U_i^T(x^+ - x)\|_i$
and $\|\nabla \w_i(U_i^T x^+)  - \nabla \w_i(U_i^T x^+)\|_{i,*} \le Q \|U_i^T(x^+ - x)\|_i$,
\begin{align}
\|\zeta\|_{i,*} &\le \left(L_i + \frac{Q}{\gamma}\right) \|U_i^T(x^+ - x)\|_i
= \left(L_i + \frac{Q}{\gamma}\right) \gamma \| U_i^T \PG(x,g(x),\gamma)\|_i \nn\\
&= (L_i + Q \gamma) \| U_i^T \PG(x,g(x),\gamma)\|_i. \nn 
\end{align}
Relation \eqnok{app_stat_point} then immediately follows from the above two relations.
\end{proof}

\vgap


A common practice in the gradient descent methods for solving nonconvex problems
(for the simple case when $X = \bbr^n$ and $\chi(x) = 0$)
is to choose the output solution $\bar x_N$ so that
\beq \label{best_trajectory}
\|g(\bar x_N)\|_* = \min_{k = 1, \ldots, N} \|g(x_k)\|_*,
\eeq
where $x_k$, $k = 1, \ldots, N$, is the trajectory generated by the
gradient descent method (see, e.g.,~\cite{Nest04}). However, such a procedure requires
the computation of the whole vector $g(x_k)$ at each iteration
and hence can be expensive if $n$ is large.
In this section, we address this problem by introducing
a randomization scheme into the SBMD algorithm as follows.
Instead of taking the best solution from the trajectory as in \eqnok{best_trajectory},
we randomly select $\bar x_N$ from $x_1, \ldots, x_N$ according
to a certain probability distribution. The basic scheme of
this algorithm is described as follows.

\begin{algorithm} [H]
    \caption{The Nonconvex SBMD Algorithm}
    \label{algSBMD_nonconvex}
    \begin{algorithmic}
\STATE Let $x_1 \in X$, stepsizes $\{\gamma_k\}_{k \ge 1}$ s.t. $\gamma_k < 2/L_i$, $i =1, \ldots,b$, and
probabilities $p_i \in [0,1]$, $i = 1, \ldots, b$, s.t. $\sum_{i=1}^b p_i = 1$
be given.
\FOR {$k=1, \ldots,N$}

\STATE 1. Generate a random variable $i_k$ according to \eqnok{eqn_prob}.
\STATE 2. Compute the partial (stochastic) gradient $G_{i_k}$ of $f(\cdot)$ at $x_k$ satisfying
\beq \label{variance_nonconvex}
\bbe[G_{i_k}] = U_{i_k}^T g(x_k) \ \ \mbox{and} \ \ \
\bbe[\|G_{i_k} - U_{i_k}^T g(x_k)\|_{i_k, *}] \le \bar \sigma_k^2,
\eeq
and update $x_k$ by \eqnok{eqn_prox_composite}.
\ENDFOR
\STATE Set $\bar x_N = x_R$ randomly according to
\beq \label{random_output}
\prob\left( {R = k} \right) = \frac{\gamma_k \min\limits_{i=1,\ldots,b} p_i \left(1 - \frac{L_{i}}{2} \gamma_k\right)}
{\sum_{k=1}^N \gamma_k \min\limits_{i=1,\ldots,b} p_i  \left(1 - \frac{L_{i}}{2} \gamma_k\right)}, \ \ k = 1,...,N.
\eeq
    \end{algorithmic}
\end{algorithm}

We add a few remarks about the above nonconvex SBMD algorithm.
Firstly, observe that we have not yet specified how the gradient $G_{i_k}$ is
computed. If the problem is deterministic, then we can simply set
$G_{i_k} = U_{i_k}^T g(x_k)$ and $\bar \sigma_k = 0$.
However, if the problem is stochastic, then the computation of
$G_{i_k}$ is a little complicated and we cannot simply set
$G_{i_k} = U_{i_k}^T \nabla F(x_k, \xi_k)$ (see Corollary~\ref{nonconvex_sto}).

Before establishing the convergence properties of the above nonconvex SBMD algorithm,
we will first present a technical result
which summarizes some important properties about
the composite prox-mapping and projected gradient.
Note that this result generalizes Lemma 1 and 2 in \cite{GhaLanZhang13-1}.

\begin{lemma} \label{proj_g_size}
Let $x_{k+1}$ be defined in \eqnok{eqn_prox_composite}, and denote
$\PG_k \equiv \PG_X(x_k, g(x_k), \gamma_k)$ and
$\tilde \PG_k \equiv \PG_{i_k}(x_k, U_{i_k} G_{i_k}, \gamma_k)$.
We have
\beq \label{comp_proj_ineq}
\langle G_{i_k}, \tilde \PG_k) \ge \|\tilde \PG_k\|^2 + \frac{1}{\gamma_k} \left[\chi(x_{k+1}) -\chi(x_k) \right],
\eeq
\beq \label{lip_comp_proj_result}
\|\tilde \PG_k - U_{i_k}^T \PG_k\|_{i_k} \le \|G_{i_k} - U_{i_k} g(x_k)\|_{i_k, *}.
\eeq
\end{lemma}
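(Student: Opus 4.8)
The plan is to derive both inequalities directly from the first-order optimality condition of the composite prox-mapping \eqref{def_prox_mapping_composite} --- the same condition already invoked in the proof of Proposition~\ref{prop_PG} --- together with the $1$-strong convexity of each $\w_i$ and the convexity and separability of $\chi$. The only preliminary is to unwind the notation. By \eqref{eqn_prox_composite} and \eqref{proj_g}, $x_{k+1}$ differs from $x_k$ only in block $i_k$, with $x_{k+1}^{(i_k)} = \CP_{i_k}(x_k^{(i_k)},G_{i_k},\gamma_k) = x_k^{(i_k)} - \gamma_k \tilde\PG_k$, while $U_{i_k}^T\PG_k = \gamma_k^{-1}[\,x_k^{(i_k)} - \CP_{i_k}(x_k^{(i_k)},U_{i_k}^T g(x_k),\gamma_k)\,]$; thus only the $i_k$-block of the ``full'' projected gradient $\PG_k$ ever enters, and $\tilde\PG_k - U_{i_k}^T\PG_k = \gamma_k^{-1}[\CP_{i_k}(x_k^{(i_k)},U_{i_k}^T g(x_k),\gamma_k) - \CP_{i_k}(x_k^{(i_k)},G_{i_k},\gamma_k)]$.

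For \eqref{comp_proj_ineq} I would write the optimality condition of \eqref{def_prox_mapping_composite} at the minimizer $x_{k+1}^{(i_k)}$: there is $p\in\partial\chi_{i_k}(x_{k+1}^{(i_k)})$ with $\langle G_{i_k} + \gamma_k^{-1}[\nabla\w_{i_k}(x_{k+1}^{(i_k)}) - \nabla\w_{i_k}(x_k^{(i_k)})] + p,\ z - x_{k+1}^{(i_k)}\rangle \ge 0$ for all $z\in X_{i_k}$. Taking $z = x_k^{(i_k)}$, dividing by $\gamma_k$, and using $x_k^{(i_k)} - x_{k+1}^{(i_k)} = \gamma_k\tilde\PG_k$, one bounds $\langle G_{i_k},\tilde\PG_k\rangle$ below by the sum of (i) $\gamma_k^{-2}\langle\nabla\w_{i_k}(x_k^{(i_k)}) - \nabla\w_{i_k}(x_{k+1}^{(i_k)}),\ x_k^{(i_k)} - x_{k+1}^{(i_k)}\rangle$, which strong convexity makes $\ge \|\tilde\PG_k\|_{i_k}^2$, and (ii) $\gamma_k^{-1}\langle p,\ x_{k+1}^{(i_k)} - x_k^{(i_k)}\rangle$, which the subgradient inequality for $\chi_{i_k}$ makes $\ge \gamma_k^{-1}[\chi_{i_k}(x_{k+1}^{(i_k)}) - \chi_{i_k}(x_k^{(i_k)})]$. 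Since only block $i_k$ moves, block-separability \eqref{separable} rewrites the latter as $\gamma_k^{-1}[\chi(x_{k+1}) - \chi(x_k)]$, and $\|\tilde\PG_k\|_{i_k}^2$ equals $\|\tilde\PG_k\|^2$ once $\tilde\PG_k$ is viewed in $\bbr^n$, giving \eqref{comp_proj_ineq}.

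For \eqref{lip_comp_proj_result} I would prove the more general fact that $y\mapsto\CP_i(u,y,\gamma)$ is $\gamma$-Lipschitz from $\|\cdot\|_{i,*}$ to $\|\cdot\|_i$. Set $z_1 = \CP_{i_k}(x_k^{(i_k)},U_{i_k}^T g(x_k),\gamma_k)$ and $z_2 = \CP_{i_k}(x_k^{(i_k)},G_{i_k},\gamma_k)$, so $\tilde\PG_k - U_{i_k}^T\PG_k = \gamma_k^{-1}(z_1 - z_2)$, and write the two optimality conditions with subgradients $p_1\in\partial\chi_{i_k}(z_1)$, $p_2\in\partial\chi_{i_k}(z_2)$; evaluate the first at $z = z_2$ and the second at $z = z_1$ and add. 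The $\w_{i_k}$-terms combine, by $1$-strong convexity, to at least $\gamma_k^{-1}\|z_1 - z_2\|_{i_k}^2$; the subgradient terms combine, by monotonicity of $\partial\chi_{i_k}$, to something nonnegative; the linear terms leave $\langle U_{i_k}^T g(x_k) - G_{i_k},\ z_2 - z_1\rangle \le \|G_{i_k} - U_{i_k}^T g(x_k)\|_{i_k,*}\,\|z_1 - z_2\|_{i_k}$ by H\"older's inequality. Cancelling one factor of $\|z_1 - z_2\|_{i_k}$ and dividing by $\gamma_k$ yields \eqref{lip_comp_proj_result}.

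There is no genuine analytic obstacle here: both bounds are routine consequences of the prox-mapping optimality condition and strong convexity, and the lemma is essentially the block-coordinate version of Lemmas~1--2 of \cite{GhaLanZhang13-1}. The one place to be careful is the bookkeeping with the $U_{i_k}$ lifting --- keeping straight that $\tilde\PG_k$, $U_{i_k}^T\PG_k$, $G_{i_k}$, and the $p_j$ all live in $\bbr^{n_{i_k}}$ and are produced by the \emph{single-block} composite prox-mapping at $x_k^{(i_k)}$ (so the full $\PG_k$ is only used through its $i_k$-block), and that the convex term generating the subgradient in \eqref{def_prox_mapping_composite} is $\chi_{i_k}$ evaluated at the optimization variable, so that the optimality condition indeed carries a $p\in\partial\chi_{i_k}(\cdot)$, exactly as already used in the proof of Proposition~\ref{prop_PG}.
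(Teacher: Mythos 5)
Your proposal is correct and follows essentially the same route as the paper: both inequalities are derived from the first-order optimality condition of the composite prox-mapping, with the test point chosen as $x_k^{(i_k)}$ (resp.\ the other prox output), strong convexity of $\w_{i_k}$ handling the Bregman terms, the subgradient inequality (equivalently, monotonicity of $\partial\chi_{i_k}$) handling the $\chi$ terms, separability converting the block statement to the full one, and H\"older/Cauchy--Schwarz finishing \eqnok{lip_comp_proj_result}. Your packaging of the second inequality as the $\gamma$-Lipschitz dependence of $\CP_i(u,\cdot,\gamma)$ on its linear argument is just a cleaner statement of the identical two-optimality-conditions-summed computation the paper performs inline.
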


\begin{proof}
By the optimality condition of \eqnok{def_prox_mapping_composite} and the definition of $x_{k+1}$ in
\eqnok{eqn_prox_composite}, there exists $p \in \partial \chi_{i_k}(x_{k+1})$ such that
\beq \label{Lip_comp_proj1}
\langle G_{i_k} + \frac{1}{\gamma_k} \left[\nabla \w_{i_k}(U_{i_k}^T x_{k+1})
 - \nabla \w_{i_k}(U_{i_k}^T x_k) \right]+p , \frac{1}{\gamma_k} (u- U_{i_k}^T x_{k+1}) \rangle \ge 0, \ \ \forall u \in X_{i_k}.
\eeq
Letting $u= U_{i_k}^T x_k $ in the above inequality and re-arranging terms, we obtain
\beq \label{temp_opt}
\begin{array}{lll}
\langle G_{i_k} , \frac{1}{\gamma_k} U_{i_k}^T (x_k-x_{k+1}) \rangle
&\ge& \frac{1}{\gamma_k^2} \langle \nabla w_{i_k}(U_{i_k}^T x_{k+1}) - \nabla w_{i_k}(U_{i_k}^T x_k),
U_{i_k}^T (x_{k+1} - x_k)\rangle + \langle p ,\frac{1}{\gamma_k} U_{i_k}^T (x_k-x_{k+1})  \rangle \\
&\ge& \frac{1}{\gamma_k^2} \langle \nabla w_{i_k}(U_{i_k}^T x_{k+1}) - \nabla w_{i_k}(U_{i_k}^T x_k),
U_{i_k}^T (x_{k+1} - x_k)\rangle\\
& & \, + \frac{1}{\gamma_k} \left[\chi_{i_k}(U_{i_k}^Tx_{k+1})-\chi_{i_k}(U_{i_k}^Tx_k) \right] \\
&\ge& \frac{1}{\gamma_k^2} \|U_{i_k}^T (x_{k+1} - x_k)\|^2 + \frac{1}{\gamma_k} \left[\chi_{i_k}(U_{i_k}^Tx_{k+1})-\chi_{i_k}(U_{i_k}^Tx_k) \right] \\
&=& \frac{1}{\gamma_k^2} \|U_{i_k}^T (x_{k+1} - x_k)\|^2 +  \frac{1}{\gamma_k} [\chi(x_{k+1}) - \chi(x_k)],
\end{array}
\eeq
where the second and third inequalities, respectively, follow from the
convexity of $\chi_{i_k}$ and the strong convexity of $\w$,
and the last identity follows from the definition of $x_{k+1}$
and the separability assumption about $\chi$ in \eqnok{separable}.
The above inequality, in view of the fact that $\gamma_k \tilde \PG_k = U_{i_k}^T (x_k - x_{k+1})$
due to \eqnok{proj_g} and \eqnok{eqn_prox_composite},
then implies \eqnok{comp_proj_ineq}.

Now we show that \eqnok{lip_comp_proj_result} holds. Let us denote
$x_{k+1}^+ = x_k - \gamma_k \PG_k$. By the optimality condition of \eqnok{def_prox_mapping_composite}
and the definition of $\PG_k$, we have, for some $q \in \partial \chi_{i_k}(x_{k+1}^+)$,
\beq \label{Lip_comp_proj2}
\langle U_{i_k}^T g(x_k) + \frac{1}{\gamma_k} \left[\nabla \w_{i_k}(U_{i_k}^T x_{k+1}^+)
 - \nabla \w_{i_k}(U_{i_k}^T x_k) \right]+q , \frac{1}{\gamma_k} (u- U_{i_k}^T x_{k+1}^+) \rangle
 \ge 0, \ \ \forall u \in X_{i_k}.
\eeq
Letting $u= U_{i_k}^T x_{k+1}^+$ in \eqnok{Lip_comp_proj1} and using an argument
similar to \eqnok{temp_opt}, we have
\[
\begin{array}{lll}
\langle G_{i_k} , \frac{1}{\gamma_k} U_{i_k}^T (x_{k+1}^+-x_{k+1}) \rangle
&\ge& \frac{1}{\gamma_k^2} \langle \nabla w_{i_k}(U_{i_k}^T x_{k+1}) - \nabla w_{i_k}(U_{i_k}^T x_{k}),
U_{i_k}^T (x_{k+1} - x_{k+1}^+)\rangle \\
& & \, + \frac{1}{\gamma_k} \left[\chi_{i_k}(U_{i_k}^Tx_{k+1})-\chi_{i_k}(U_{i_k}^T x_{k+1}^+) \right].
\end{array}
\]
Similarly, letting $u= U_{i_k}^T x_{k+1}$ in \eqnok{Lip_comp_proj2}, we have
\[
\begin{array}{lll}
\langle U_{i_k}^T g(x_k) , \frac{1}{\gamma_k} U_{i_k}^T (x_{k+1}-x_{k+1}^+) \rangle
&\ge& \frac{1}{\gamma_k^2} \langle \nabla w_{i_k}(U_{i_k}^T x_{k+1}^+) - \nabla w_{i_k}(U_{i_k}^T x_{k}),
U_{i_k}^T (x_{k+1}^+ - x_{k+1})\rangle \\
& & \, + \frac{1}{\gamma_k} \left[\chi_{i_k}(U_{i_k}^T x_{k+1}^+)-\chi_{i_k}(U_{i_k}^T x_{k+1}) \right].
\end{array}
\]
Summing up the above two inequalities, we obtain
\[
\begin{array}{lll}
\langle G_{i_k} -  U_{i_k}^T g(x_k), U_{i_k}^T (x_{k+1}^+-x_{k+1}) \rangle
&\ge& \frac{1}{\gamma_k} \langle \nabla w_{i_k}(U_{i_k}^T x_{k+1}) - \nabla w_{i_k}(U_{i_k}^T x_{k+1}^+),
U_{i_k}^T (x_{k+1} - x_{k+1}^+)\rangle\\
&\ge& \frac{1}{\gamma_k} \|U_{i_k}^T (x_{k+1} - x_{k+1}^+)\|_{i_k}^2,
\end{array}
\]
which, in view of the Cauchy-Schwarz inequality, then implies that
\[
\frac{1}{\gamma_k} \|U_{i_k}^T (x_{k+1} - x_{k+1}^+)\|_{i_k} \le \| G_{i_k} -  U_{i_k}^T g(x_k)\|_{i_k,*}.
\]
Using the above relation and \eqnok{proj_g},
we have
\[
\begin{array}{lll}
\|\tilde \PG_k - U_{i_k}^T \PG_k\|_{i_k}  &=& \|\frac{1}{\gamma_k}U_{i_k}^T(x_k-x_{k+1})- \frac{1}{\gamma_k}U_{i_k}^T(x_k- x_{k+1}^+)\|_{i_k} \\
&=& \frac{1}{\gamma_k} \|U_{i_k}^T(x_{k+1}^+ - x_{k+1})\|_{i_k} \le \| G_{i_k} -  U_{i_k}^T g(x_k)\|_{i_k,*}.
\end{array}
\]
\end{proof}

We are now ready to describe the main convergence
properties of the nonconvex SBMD algorithm.

\begin{theorem} \label{theorem_nonconvex}
Let $\bar x_N = x_R$ be the output of the nonconvex SBMD algorithm.
We have
\beq \label{main_result_nonconvex}
\bbe[\|\PG_X(x_R, g(x_R), \gamma_R)\|^2] \le
\frac{\phi(x_1) - \phi^* + 2 \sum_{k=1}^N \gamma_k \bar \sigma_k^2}
{\sum_{k=1}^N \gamma_k \min\limits_{i=1,\ldots,b} p_i \left(1 - \frac{L_{i}}{2} \gamma_k\right)}
\eeq
for any $N \ge 1$, where the expectation is taken w.r.t. $i_k, G_{i_k}$, and $R$.
\end{theorem}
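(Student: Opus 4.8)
The plan is to establish a one‑step descent estimate for the potential $\phi(x_k)$, take its expectation over the random block index and the gradient noise, telescope over $k=1,\dots,N$, and finally average against the exit distribution \eqnok{random_output}. First I would apply the block smoothness bound \eqnok{smooth_f1} at $x_k$ along $\rho_{i_k}:=U_{i_k}^T(x_{k+1}-x_k)=-\gamma_k\tilde\PG_k$ (in the notation of Lemma~\ref{proj_g_size}), add $\chi(x_{k+1})$ to both sides, set $\delta_{i_k}:=G_{i_k}-U_{i_k}^T g(x_k)$ so that $g_{i_k}(x_k)=G_{i_k}-\delta_{i_k}$, and use \eqnok{comp_proj_ineq} to absorb $\langle G_{i_k},\tilde\PG_k\rangle$ together with $\chi(x_{k+1})-\chi(x_k)$. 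This should produce
\[
\phi(x_{k+1})\le\phi(x_k)-\gamma_k\Bigl(1-\tfrac{L_{i_k}}{2}\gamma_k\Bigr)\|\tilde\PG_k\|_{i_k}^2+\gamma_k\langle\delta_{i_k},\tilde\PG_k\rangle .
\]

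Next I would rewrite the right‑hand side in terms of the true projected gradient $\PG_k:=\PG_X(x_k,g(x_k),\gamma_k)$. Splitting $\langle\delta_{i_k},\tilde\PG_k\rangle=\langle\delta_{i_k},U_{i_k}^T\PG_k\rangle+\langle\delta_{i_k},\tilde\PG_k-U_{i_k}^T\PG_k\rangle$, the first summand has zero conditional expectation, since conditionally on $i_k$ and the past $U_{i_k}^T\PG_k$ is deterministic and $\bbe[\delta_{i_k}]=0$ by \eqnok{variance_nonconvex}, while the second is bounded by $\|\delta_{i_k}\|_{i_k,*}\|\tilde\PG_k-U_{i_k}^T\PG_k\|_{i_k}\le\|\delta_{i_k}\|_{i_k,*}^2$ via \eqnok{lip_comp_proj_result}. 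The descent term is bounded from below by combining \eqnok{lip_comp_proj_result} with $\|a\|^2\ge\tfrac12\|b\|^2-\|a-b\|^2$, which gives $\|\tilde\PG_k\|_{i_k}^2\ge\tfrac12\|U_{i_k}^T\PG_k\|_{i_k}^2-\|\delta_{i_k}\|_{i_k,*}^2$.

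I would then take the conditional expectation over $i_k$ using \eqnok{eqn_prob}, bounding $\sum_{i=1}^b p_i(1-\tfrac{L_i}{2}\gamma_k)\|U_i^T\PG_k\|_i^2\ge\bigl(\min_j p_j(1-\tfrac{L_j}{2}\gamma_k)\bigr)\|\PG_k\|^2$ (all factors $1-\tfrac{L_i}{2}\gamma_k$ being positive since $\gamma_k<2/L_i$), and using the variance bound $\bbe\|\delta_{i_k}\|_{i_k,*}^2\le\bar\sigma_k^2$. Summing over $k=1,\dots,N$, telescoping, and using $\phi(x_{N+1})\ge\phi^*$ then yields a lower bound on $\sum_{k=1}^N\gamma_k\min_j p_j(1-\tfrac{L_j}{2}\gamma_k)\,\bbe\|\PG_k\|^2$ in terms of $\phi(x_1)-\phi^*$ and $\sum_{k=1}^N\gamma_k\bar\sigma_k^2$. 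Dividing by $\sum_{k=1}^N\gamma_k\min_j p_j(1-\tfrac{L_j}{2}\gamma_k)$ and observing that the distribution \eqnok{random_output} of $R$ is deterministic, hence independent of the trajectory, so that $\bbe[\|\PG_X(x_R,g(x_R),\gamma_R)\|^2]$ equals exactly this weighted average of the $\bbe\|\PG_k\|^2$, gives \eqnok{main_result_nonconvex} after collecting the constants.

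The step I expect to be the main obstacle is the passage from the block quantity $\tilde\PG_k$ — which is what the algorithm actually controls — to the full projected gradient $\PG_k$ appearing in \eqnok{main_result_nonconvex}; this requires both the nonexpansiveness‑type estimate \eqnok{lip_comp_proj_result} for the composite prox‑mapping and a careful separation of the gradient noise into a mean‑zero martingale part and a second‑moment part, and it is this splitting that fixes the form of the numerator. A secondary point to watch is the order of conditioning in the martingale argument: the expectation over the noise in $G_{i_k}$ must be taken after conditioning on $i_k$ and the history, so that $U_{i_k}^T\PG_k$ is measurable with respect to the conditioning $\sigma$‑field and the cross term indeed vanishes in conditional mean.
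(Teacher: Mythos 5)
Your proposal is correct and follows essentially the same route as the paper: the one-step bound via \eqnok{smooth_f1} and \eqnok{comp_proj_ineq}, the passage from $\tilde\PG_k$ to $\PG_k$ via \eqnok{lip_comp_proj_result} with the same splitting of the noise into a mean-zero cross term and a $\|\delta_k\|_{i_k,*}^2$ term (yielding the factor $2$ in the numerator), the conditional expectation over $i_k$ producing the $\min_i p_i(1-\tfrac{L_i}{2}\gamma_k)$ factor, and the final telescoping and averaging against the distribution of $R$. No gaps.
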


\begin{proof}
Denote $g_k \equiv g(x_k)$, $\delta_k \equiv G_{i_k} - U_{i_k}^T g_k$,
$\PG_k \equiv \PG_X(x_k, g_k, \gamma_k)$,
and
$\tilde \PG_k \equiv \PG_{i_k}(x_k, U_{i_k} G_{i_k}, \gamma_k)$
for any $k \ge 1$. Note that by \eqnok{eqn_prox_composite} and
\eqnok{proj_g}, we have $x_{k+1} - x_k = - \gamma_k U_{i_k} \tilde \PG_k$.
Using this observation and \eqnok{smooth_f1}, we have, for any $k = 1, \ldots, N$,
\beqa
f(x_{k+1}) &\le& f(x_k) + \langle g_k, x_{k+1}-x_k \rangle + \frac{L_{i_k}}{2} \|x_{k+1}-x_k\|^2 \label{lip_grad_f}  \nn\\
&=& f(x_k) -\gamma_k \langle g_k, U_{i_k} \tilde \PG_k \rangle + \frac{L_{i_k}}{2} \gamma_k^2 \|\tilde \PG_k \|_{i_k}^2 \nn \\
&=& f(x_k) -\gamma_k \langle G_{i_k}, \tilde \PG_k  \rangle + \frac{L_{i_k}}{2} \gamma_k^2 \|\tilde \PG_k \|_{i_k}^2
+ \gamma_k \langle \delta_k, \tilde \PG_k  \rangle.\nn
\eeqa
Using the above inequality and Lemma~\ref{proj_g_size}, we obtain
\[
f(x_{k+1}) \le f(x_k) - \left[\gamma_k \|\tilde \PG_k\|_{i_k}^2+ \chi(x_{k+1})-\chi(x_k)\right]
+ \frac{L_{i_k}}{2} \gamma_k^2 \|\tilde \PG_k\|_{i_k}^2 + \gamma_k \langle \delta_k, \tilde \PG_k  \rangle,
\]
which, in view of the fact that $\phi(x) = f(x) + \chi(x)$, then implies that
\beq \label{nonconvex_key_rel}
\phi(x_{k+1}) \le \phi(x_k) - \gamma_k \left(1 - \frac{L_{i_k}}{2} \gamma_k\right)
\|\tilde \PG_k\|_{i_k}^2 + \gamma_k \langle \delta_k, \tilde \PG_k  \rangle.
\eeq
Also observe that by \eqnok{lip_comp_proj_result}, the definition of $\tilde \PG_k$, and
the fact  $U_{i_k}^T \PG_k = \PG_{X_{i_k}}(x_k, U_{i_k}^T g_k, \gamma_k)$,
\[
\|\tilde \PG_k - U_{i_k}^T \PG_k\|_{i_k} \le \|G_{i_k} -  U_{i_k}^T g_k\|_{i_k, *} = \|\delta_k\|_{i_k, *},
\]
and hence that
\beqas
\|U_{i_k}^T \PG_k\|_{i_k}^2
&=& \|\tilde \PG_k + U_{i_k}^T \PG_k -\tilde \PG_k \|_{i_k}^2
\le 2 \|\tilde \PG_k\|_{i_k}^2 + 2\|U_{i_k}^T \PG_k -\tilde \PG_k \|_{i_k} \\
&\le& 2 \|\tilde \PG_k\|^2 + 2 \|\delta_k\|_{i_k, *}^2, \\
\langle \delta_k, \tilde \PG_k  \rangle &=& \langle \delta_k, U_{i_k}^T \PG_k \rangle
+ \langle \delta_k, \tilde \PG_k - U_{i_k}^T \PG_k \rangle
\le \langle \delta_k, U_{i_k}^T \PG_k \rangle + \|\delta_k\|_{i_k, *} \|\tilde \PG_k - U_{i_k}^T \PG_k\|_{i_k}\\
&\le& \langle \delta_k, U_{i_k}^T \PG_k \rangle + \|\delta_k\|_{i_k, *}^2.
\eeqas
By using the above two bounds and \eqnok{nonconvex_key_rel},
we obtain
\[
\phi(x_{k+1}) \le \phi(x_k) - \gamma_k \left(1 - \frac{L_{i_k}}{2} \gamma_k\right)
\left(\frac{1}{2} \|U_{i_k}^T \PG_k\|^2 - \|\delta_k\|^2 \right)+ \gamma_k \langle \delta_k,U_{i_k}^T \PG_k \rangle
+  \gamma_k \|\delta_k\|_{i_k, *}^2
\]
for any $k = 1, \ldots, N$.
Summing up the above inequalities and re-arranging the terms, we obtain
\[
\begin{array}{lll}
\sum_{k=1}^N \frac{\gamma_k}{2} \left(1 - \frac{L_{i_k}}{2} \gamma_k\right) \|U_{i_k}^T \PG_k\|^2
&\le& \phi(x_1) - \phi(x_{k+1}) + \sum_{k=1}^N \left[\gamma_k \langle \delta_k,U_{i_k}^T \PG_k \rangle
+  \gamma_k \|\delta_k\|_{i_k, *}^2\right] \\
& & + \, \sum_{k=1}^N \gamma_k \left(1 - \frac{L_{i_k}}{2} \gamma_k\right)  \|\delta_k\|_{i_k, *}^2 \\
&\le& \phi(x_1) - \phi^* + \sum_{k=1}^N \left[\gamma_k \langle \delta_k,U_{i_k}^T \PG_k \rangle
+  2 \gamma_k \|\delta_k\|_{i_k, *}^2\right],
\end{array}
\]
where the last inequality follows from the facts that $\phi(x_{k+1}) \ge \phi^*$
and $L_{i_k} \gamma_k^2 \|\delta_k\|_{i_k, *}^2 \ge 0$.
Now denoting $\zeta_k = G_{i_k}$, $\zeta_{[k]} = \{\zeta_1, \ldots, \zeta_k\}$ and
$i_{[k]} = \{i_1, \ldots, i_k\}$, taking expectation on both sides of the above inequality
w.r.t. $\zeta_{[N]}$ and $i_{[N]}$, and noting that by
\eqnok{uniform} and \eqnok{variance_nonconvex},
\beqas
\bbe_{\zeta_k}\left[\langle \delta_k,U_{i_k}^T \PG_k \rangle | i_{[k]}, \zeta_{[k-1]}\right] &=&
\bbe_{\zeta_k}\left[\langle G_{i_k} - U_{i_k}^T g_k, U_{i_k}^T \PG_k \rangle | i_{[k]}, \zeta_{[k-1]} \right] =0, \\
\bbe_{\zeta_{[N]},i_{[N]}}[ \|\delta_k\|_{i_k, *}^2]  &\le&  \bar\sigma_k^2, \\
\bbe_{i_k} \left[\left(1 - \frac{L_{i_k}}{2} \gamma_k\right)\|U_{i_k}^T \PG_k\|^2|\zeta_{[k-1], i_{[k-1]}}\right] &=&
\sum_{i=1}^b p_i \left(1- \frac{L_{i}}{2} \gamma_k\right) \|U_{i}^T \PG_k\|^2 \\
& \ge & \sum_{i=1}^b  \|U_{i}^T \PG_k\|^2 \min\limits_{i=1,\ldots,b} p_i \left(1 - \frac{L_{i}}{2} \gamma_k\right) \\
&=& \|\PG_k\|^2 \min\limits_{i=1,\ldots,b} p_i \left(1 - \frac{L_{i}}{2} \gamma_k\right),
\eeqas
we conclude that
\[
\sum_{k=1}^N  \gamma_k \min\limits_{i=1,\ldots,b} p_i \left(1 - \frac{L_{i}}{2} \gamma_k\right)
\bbe_{\xi_{[N]},i_{[N]}} \left[\|\PG_k\|^2\right]
\le \phi(x_1) - \phi^* + 2 \sum_{k=1}^N \gamma_k \bar \sigma_k^2.
\]
Dividing both sides of the above inequality by $\sum_{k=1}^N  \min\limits_{i=1,\ldots,b} p_i \left(1 - \frac{L_{i}}{2} \gamma_k\right)$,
and using the probability distribution of $R$ given in \eqnok{random_output},
we obtain \eqnok{main_result_nonconvex}.
\end{proof}

\vgap

We now discuss some consequences for Theorem~\ref{theorem_nonconvex}.
More specifically, we discuss the rate of convergence of the nonconvex
SBMD algorithm for solving deterministic and stochastic
problems, respectively, in Corollaries~\ref{nonconvex_det} and \ref{nonconvex_sto}.

\begin{corollary} \label{nonconvex_det}
Consider the deterministic case when $\bar \sigma_k = 0$, $k = 1, \ldots, N$, in \eqnok{variance_nonconvex}.
Suppose that the random variable $\{i_k\}$ are uniformly distributed (i.e., \eqnok{uniform} holds).
If $\{\gamma_k\}$ are set to
\beq \label{nonconvex_determine}
\gamma_k = \frac{1}{\bar L}, k = 1, \ldots, N,
\eeq
where $\bar L$ is defined in \eqnok{gamma_composite},
then we have, for any $N \ge 1$,
\beq \label{nonconvex_determine_result}
\bbe[\|\PG_X(x_R, g(x_R), \gamma_R)\|^2] \le \frac{2 b \bar L [\phi(x_1) - \phi^*]}{N}.
\eeq
\end{corollary}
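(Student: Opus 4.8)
The plan is to obtain Corollary~\ref{nonconvex_det} as an immediate specialization of Theorem~\ref{theorem_nonconvex}. First I would set $\bar\sigma_k = 0$ for all $k = 1,\ldots,N$ in the bound \eqnok{main_result_nonconvex}, which makes the term $2\sum_{k=1}^N \gamma_k \bar\sigma_k^2$ in the numerator vanish, leaving just $\phi(x_1) - \phi^*$.

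Next I would evaluate the denominator $\sum_{k=1}^N \gamma_k \min_{i=1,\ldots,b} p_i(1 - \frac{L_i}{2}\gamma_k)$ under the choices $p_i = 1/b$ (from \eqnok{uniform}) and $\gamma_k = 1/\bar L$. Since $L_i \le \bar L$ for every $i$ by the definition of $\bar L$ in \eqnok{gamma_composite}, each factor satisfies $1 - \frac{L_i}{2}\gamma_k = 1 - \frac{L_i}{2\bar L} \ge 1/2$, with equality at any index achieving $L_i = \bar L$; hence $\min_{i=1,\ldots,b} p_i(1 - \frac{L_i}{2}\gamma_k) = \frac{1}{2b}$. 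I would also note that the same bound $L_i \le \bar L$ forces $\gamma_k = 1/\bar L < 2/L_i$ for all $i$, so the stepsize requirement of the nonconvex SBMD algorithm is respected. Consequently the denominator equals $\sum_{k=1}^N \frac{1}{\bar L}\cdot\frac{1}{2b} = \frac{N}{2b\bar L}$.

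Substituting these two evaluations into \eqnok{main_result_nonconvex} then gives $\bbe[\|\PG_X(x_R, g(x_R), \gamma_R)\|^2] \le (\phi(x_1) - \phi^*) / (N/(2b\bar L)) = 2b\bar L[\phi(x_1) - \phi^*]/N$, which is precisely \eqnok{nonconvex_determine_result}. I do not anticipate any real obstacle: the corollary is essentially a one-line arithmetic consequence of the theorem, and the only points needing a moment's care are identifying the minimizing index in $\min_i(1 - L_i/(2\bar L))$ and verifying admissibility of the constant stepsize, both of which follow at once from $L_i \le \bar L$.
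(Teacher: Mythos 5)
Your proposal is correct and follows essentially the same route as the paper: plug $\bar\sigma_k=0$, $p_i=1/b$, and $\gamma_k=1/\bar L$ into \eqnok{main_result_nonconvex} and observe that $\min_i p_i\left(1-\frac{L_i}{2}\gamma_k\right)=\frac{1}{2b}$ since $L_i\le\bar L$, so the denominator is $N/(2b\bar L)$. Your additional check that $\gamma_k=1/\bar L<2/L_i$ (admissibility of the stepsize) is a small point the paper leaves implicit but does not change the argument.
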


\begin{proof}
Noting that by our assumptions about $p_i$ and \eqnok{nonconvex_determine},
we have
\beq \label{bnd_denom_nonconvex}
\min_{i=1, \ldots, b} p_i \left(1 - \frac{L_i}{2} \gamma_k\right) =
\frac{1}{b} \min_{i=1, \ldots, b} \left(1 - \frac{L_i}{2} \gamma_k\right) \ge \frac{1}{2b},
\eeq
which, in view of \eqnok{main_result_nonconvex} and the fact that $\bar \sigma_k = 0$,
then implies that, for any $N \ge 1$,
\[
\bbe[\|\PG_X(x_R, g(x_R), \gamma_R)\|^2] \le \frac{2b [\phi(x_1) - \phi^*]}{N} \frac{1}{\bar L}
= \frac{2b \bar L [\phi(x_1) - \phi^*]}{N }.
\]
\end{proof}

Now, let us consider the stochastic case when $f(\cdot)$ is given in the form of expectation (see \eqnok{sp}).
Suppose that the norms $\|\cdot\|_i$ are inner product norms in $\bbr^{n_i}$
and that
\beq \label{assump2.1}
\bbe[\|U_i \nabla F(x, \xi) - g_i(x)\|] \le \sigma \ \ \forall x \in X
\eeq
for any $i= 1, \ldots, b$. Also assume that $G_{i_k}$ is computed by
using a mini-batch approach with size $T_k$, i.e.,
\beq \label{mini_batch}
G_{i_k} = \frac{1}{T_k} \sum_{t=1}^{T_k} U_{i_k} \nabla F(x_k, \xi_{k,t}),
\eeq
for some $T_k \ge 1$, where $\xi_{k,1}, \ldots, \xi_{k,T_k}$ are i.i.d. samples
of $\xi$.

\vgap

\begin{corollary} \label{nonconvex_sto}
Assume that the random variables $\{i_k\}$ are uniformly distributed (i.e., \eqnok{uniform} holds).
Also assume that $G_{i_k}$ is computed by \eqnok{mini_batch} for $T_k = T$
and that $\{\gamma_k\}$ are set to \eqnok{nonconvex_determine}.
Then we have
\beq \label{nonconvex_stoch_result}
\bbe[\|\PG_X(x_R, g(x_R), \gamma_R, h)\|^2] \le \frac{2 b \bar L [\phi(x_1) - \phi^*] }{N}
+ \frac{4 \sum_{i=1}^b \sigma_i^2}{T}
\eeq
for any $N \ge 1$,
where $\bar L$ is defined in \eqnok{gamma_composite}.
\end{corollary}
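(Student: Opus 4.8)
The plan is to obtain \eqnok{nonconvex_stoch_result} as a direct specialization of Theorem~\ref{theorem_nonconvex} with the choices $p_i = 1/b$ and $\gamma_k = 1/\bar L$ (note $\gamma_k = 1/\bar L \le 1/L_i < 2/L_i$, so the stepsize restriction imposed by the algorithm is satisfied). The only substantive work is therefore to exhibit an admissible value of the noise parameter $\bar\sigma_k^2$ appearing in \eqnok{variance_nonconvex} for the mini-batch estimator \eqnok{mini_batch}, after which the bound \eqnok{main_result_nonconvex} is simplified.

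First I would verify the two requirements in \eqnok{variance_nonconvex} for $G_{i_k}$ defined by \eqnok{mini_batch}. Unbiasedness, $\bbe[G_{i_k}\mid i_k] = U_{i_k}^T g(x_k)$, is immediate because $\xi_{k,1},\dots,\xi_{k,T}$ are i.i.d.\ copies of $\xi$. For the second moment, condition on $i_k$ and $x_k$. Since $\|\cdot\|_{i_k}$ --- and hence its conjugate $\|\cdot\|_{i_k,*}$ --- is an inner-product norm, the expected squared conjugate norm of a sum of i.i.d.\ mean-zero vectors equals the sum of the individual expected squared conjugate norms (all cross terms vanish); applying this to \eqnok{mini_batch} gives
\[
\bbe[\|G_{i_k} - U_{i_k}^T g(x_k)\|_{i_k,*}^2 \mid i_k, x_k] = \frac{1}{T}\,\bbe[\|U_{i_k}^T \nabla F(x_k,\xi) - g_{i_k}(x_k)\|_{i_k,*}^2] \le \frac{\sigma_{i_k}^2}{T},
\]
where the inequality is \eqnok{assump2.1}. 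Taking expectation over the uniformly distributed $i_k$ (cf.\ \eqnok{uniform}) then shows that \eqnok{variance_nonconvex} holds with the constant value $\bar\sigma_k^2 := \frac{1}{bT}\sum_{i=1}^b \sigma_i^2$ for every $k$.

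It then remains to plug this into \eqnok{main_result_nonconvex}. The denominator is bounded below exactly as in the proof of Corollary~\ref{nonconvex_det}: by \eqnok{bnd_denom_nonconvex} we have $\min_i p_i(1 - \tfrac{L_i}{2}\gamma_k) \ge \tfrac{1}{2b}$, so $\sum_{k=1}^N \gamma_k \min_i p_i(1 - \tfrac{L_i}{2}\gamma_k) \ge \tfrac{N}{2b\bar L}$. The numerator equals $\phi(x_1) - \phi^* + 2\sum_{k=1}^N \gamma_k \bar\sigma_k^2 = \phi(x_1) - \phi^* + \frac{2N}{b\bar L T}\sum_{i=1}^b \sigma_i^2$. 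Dividing, the term $\phi(x_1) - \phi^*$ contributes $\frac{2b\bar L[\phi(x_1) - \phi^*]}{N}$ and the noise term contributes $\frac{4}{T}\sum_{i=1}^b\sigma_i^2$, which is precisely \eqnok{nonconvex_stoch_result}.

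The one step that genuinely requires care --- the main obstacle --- is the variance-reduction estimate in the second paragraph: the $1/T$ decay of the mini-batch error is exactly where the hypothesis that each $\|\cdot\|_i$ is an inner-product norm enters, since for a general norm the expected squared norm of an average of $T$ i.i.d.\ mean-zero vectors need not decrease like $1/T$. Once this is established, the remainder is the same bookkeeping already carried out for the deterministic Corollary~\ref{nonconvex_det}.
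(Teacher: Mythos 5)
Your proposal is correct and follows essentially the same route as the paper: establish that the mini-batch estimator satisfies \eqnok{variance_nonconvex} with $\bar\sigma_k^2 = O(\sigma^2/T)$ by exploiting the vanishing cross terms for inner-product norms (the paper phrases this as a martingale expansion of $\bbe[\|S_{T_k}\|^2]$, which is the same computation), then substitute into \eqnok{main_result_nonconvex} using the denominator bound \eqnok{bnd_denom_nonconvex}. Your bookkeeping with $\bar\sigma_k^2 = \tfrac{1}{bT}\sum_i\sigma_i^2$ is in fact slightly more careful than the paper's, and reproduces the stated bound $\tfrac{4}{T}\sum_i\sigma_i^2$ exactly.
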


\begin{proof}
Denote $\delta_{k,t} \equiv U_{i_k} [\nabla F(x_k, \xi_{k,t}) - g(x_k)]$
and $S_t = \sum_{i=1}^{t} \delta_{k,i}$.
Noting that $\bbe[\langle S_{t-1}, \delta_{k, t}\rangle |S_{t-1}] = 0$ for
all $t = 1, \ldots, T_k$, we have
\begin{align}
\bbe[\|S_{T_k}\|^2 ]
&= \bbe \left[ \|S_{T_k -1}\|^2 + 2 \langle S_{T_k -1},
\delta_{k,T_k} \rangle + \|\delta_{k,T_k}\|^2\right] \nn\\
&= \bbe[\|S_{T_k -1}\|^2] + \bbe [\|\delta_{k,T_k}\|^2]
= \ldots =  \sum_{t=1}^{T_k} \|\delta_{k,t}\|^2,\nn
\end{align}
which together with \eqnok{mini_batch} then imply that
the conditions in \eqnok{variance_nonconvex} hold with
$
\bar \sigma_k^2 = \sigma^2/T_k.
$
It then follows from the previous observation and \eqnok{main_result_nonconvex} that
\beqas
\bbe[\|\PG_X(x_R, g(x_R), \gamma_R, h)\|^2] \le \frac{2b [\phi(x_1) - \phi^*]}{ \frac{N}{\bar L}}
+ \frac{4 b}{N} \sum_{k=1}^N \frac{\sigma^2}{T_k}\\
\le \frac{2b \bar L [\phi(x_1) - \phi^*]}{N } + \frac{4 b \sigma^2}{T}.
\eeqas
\end{proof}

In view of Corollary~\ref{nonconvex_sto}, in order to find an $\epsilon$ solution
of problem~\eqnok{spc}, we need to have
\beq \label{settings_nonconvex}
N = {\cal O} \left (
\frac{b \bar L}{\epsilon} [\phi(x_1) - \phi^*] \right) \ \ \mbox{and} \ \ \
T = {\cal O} \left(
\frac{b \sigma^2}{\epsilon} \right),
\eeq
which implies that the total number of samples of $\xi$ required can be bounded by
\[
{\cal O} \left( b^2 \bar L \sigma^2 [\phi(x_1) - \phi^*] /\epsilon^2
\right).
\]
The previous bound is comparable, up to a constant
factor $b^2$, to those obtained in \cite{GhaLan12,GhaLanZhang13-1}
for solving nonconvex SP problems without using block decomposition.
Note that it is possible to derive and improve the
large-deviation results associated with the above complexity results,
by using a two-phase procedure similar to those in \cite{GhaLan12,GhaLanZhang13-1}.
However, the development of these results are more involved
and hence the details are skipped.

\section{Conclusions}
In this paper, we study a new class of stochastic algorithms, namely
the SBMD methods, by incorporating the block
decomposition and an incremental block averaging scheme into the
classic mirror-descent method, for solving different convex stochastic optimization
problems, including general nonsmooth, smooth, composite and strongly
convex problems. We establish the rate of convergence of these algorithms and
show that their iteration cost can be considerably smaller than that of the mirror-descent
methods. We also develop a nonconvex SBMD algorithm and establish its worst-case
complexity for solving nonconvex stochastic composite optimization problems,
by replacing the incremental block averaging scheme with a randomization scheme to compute the output
solution. While this paper focuses on stochastic optimization, 
some of our results are also new in BCD type methods
for deterministic optimization, which include the incorporation of new averaging/randomization schemes for computing
the output solution, the derivation of large-deviation results for nonsmooth optimization and
the analysis of the rate of convergence for nonsmooth strongly convex problems and
the nonconvex composite optimization problems.

\vgap

{\bf Acknowledgement:} The authors would like to thank Professors Stephen J. Wright
and Yurri Nesterov for their encouragement and inspiring discussions in the study of this topic.

\bibliographystyle{plain}
\bibliography{glan-bib}

\newcommand{\noopsort}[1]{} \newcommand{\printfirst}[2]{#1}
  \newcommand{\singleletter}[1]{#1} \newcommand{\switchargs}[2]{#2#1}
\begin{thebibliography}{10}

\bibitem{AuTe06-1}
A.~Auslender and M.~Teboulle.
\newblock Interior gradient and proximal methods for convex and conic
  optimization.
\newblock {\em SIAM Journal on Optimization}, 16:697--725, 2006.

\bibitem{BBC03-1}
H.H. Bauschke, J.M. Borwein, and P.L. Combettes.
\newblock Bregman monotone optimization algorithms.
\newblock {\em SIAM Journal on Controal and Optimization}, 42:596--636, 2003.

\bibitem{BeckTeb03-1}
A.~Beck and M.~Teboulle.
\newblock Mirror descent and nonlinear projected subgradient methods for convex
  optimization.
\newblock {\em Operations Research Letters}, 31:167--175, 2003.

\bibitem{BeckTet13-1}
A.~Beck and L.~Tetruashvili.
\newblock On the convergence of block coordinate descent type methods.
\newblock Technical report.
\newblock submitted to {\sl SIAM Journal on Optimization}.

\bibitem{Breg67}
L.M. Bregman.
\newblock The relaxation method of finding the common point convex sets and its
  application to the solution of problems in convex programming.
\newblock {\em USSR Comput. Math. Phys.}, 7:200--217, 1967.

\bibitem{DangLan12-1}
C.~D. Dang and G.~Lan.
\newblock On the convergence properties of non-euclidean extragradient methods
  for variational inequalities with generalized monotone operators.
\newblock Manuscript, Department of Industrial and Systems Engineering,
  University of Florida, Gainesville, FL 32611, USA, April 2012.
\newblock Available on http://www.optimization-online.org/.

\bibitem{GhaLan12-2a}
S.~Ghadimi and G.~Lan.
\newblock Optimal stochastic approximation algorithms for strongly convex
  stochastic composite optimization, {I}: a generic algorithmic framework.
\newblock {\em SIAM Journal on Optimization}, 22:1469--1492, 2012.

\bibitem{GhaLan13-1}
S.~Ghadimi and G.~Lan.
\newblock Optimal stochastic approximation algorithms for strongly convex
  stochastic composite optimization, {II}: shrinking procedures and optimal
  algorithms.
\newblock {\em SIAM Journal on Optimization}, 2013.
\newblock to appear.

\bibitem{GhaLan12}
S.~Ghadimi and G.~Lan.
\newblock Stochastic first- and zeroth-order methods for nonconvex stochastic
  programming.
\newblock Technical report, Department of Industrial and Systems Engineering,
  University of Florida, Gainesville, FL 32611, USA, June 2012.
\newblock {\it SIAM Journal on Optimization} (under second-round review).

\bibitem{GhaLanZhang13-1}
S.~Ghadimi, G.~Lan, and H.~Zhang.
\newblock Mini-batch stochastic approximation methods for constrained nonconvex
  stochastic programming.
\newblock Manuscript, Department of Industrial and Systems Engineering,
  University of Florida, Gainesville, FL 32611, USA, August 2013.

\bibitem{JNTV05-1}
A.~Juditsky, A.~Nazin, A.~B. Tsybakov, and N.~Vayatis.
\newblock Recursive aggregation of estimators via the mirror descent algorithm
  with average.
\newblock {\em Problems of Information Transmission}, 41:n.4, 2005.

\bibitem{jn10-1}
A.~Juditsky and Y.~E. Nesterov.
\newblock Primal-dual subgradient methods for minimizing uniformly convex
  functions.
\newblock Manuscript.

\bibitem{JRT08-1}
A.~Juditsky, P.~Rigollet, and A.~B. Tsybakov.
\newblock Learning by mirror averaging.
\newblock {\em Annals of Statistics}, 36:2183--2206, 2008.

\bibitem{Lan10-3}
G.~Lan.
\newblock An optimal method for stochastic composite optimization.
\newblock {\em Mathematical Programming}, 133(1):365--397, 2012.

\bibitem{Lan13-2}
G.~Lan.
\newblock Bundle-level type methods uniformly optimal for smooth and non-smooth
  convex optimization.
\newblock Manuscript, Department of Industrial and Systems Engineering,
  University of Florida, Gainesville, FL 32611, USA, January 2013.
\newblock Revision submitted to {\sl Mathematical Programming}.

\bibitem{LanMon13-1}
G.~Lan and R.~D.~C. Monteiro.
\newblock Iteration-complexity of first-order penalty methods for convex
  programming.
\newblock {\em Mathematical Programming}, 138:115--139, 2013.

\bibitem{LanMon09-1}
G.~Lan and R.~D.~C. Monteiro.
\newblock Iteration-complexity of first-order augmented lagrangian methods for
  convex programming.
\newblock Manuscript, School of Industrial and Systems Engineering, Georgia
  Institute of Technology, Atlanta, GA 30332, USA, May 2009.
\newblock {\sl Mathematical Programming} (under revision).

\bibitem{lns11}
G.~Lan, A.~S. Nemirovski, and A.~Shapiro.
\newblock Validation analysis of mirror descent stochastic approximation
  method.
\newblock {\em Mathematical Programming}, 134:425--458, 2012.

\bibitem{LevLew10-1}
D.~Leventhal and A.~S. Lewis.
\newblock Randomized methods for linear constraints: Convergence rates and
  conditioning.
\newblock {\em Mathematics of Operations Research}, 35:641--654, 2010.

\bibitem{LinChePe11-1}
Q.~Lin, X.~Chen, and J.~{Pe\~{n}a}.
\newblock A sparsity preserving stochastic gradient method for composite
  optimization.
\newblock Manuscript, Carnegie Mellon University, PA 15213, April 2011.

\bibitem{LuXiao13-1}
Z.~Lu and L.~Xiao.
\newblock On the complexity analysis of randomized block-coordinate descent
  methods.
\newblock Manuscript, 2013.

\bibitem{LuoTseng91-1}
Z.Q. Luo and P.~Tseng.
\newblock On the convergence of a matrix splitting algorithm for the symmetric
  monotone linear complementarity problem.
\newblock {\em SIAM Journal on Control and Optimization}, 29:037 -- 1060, 1991.

\bibitem{NO13-1}
A.~Nedi{\'c}.
\newblock On stochastic subgradient mirror-descent algorithm with weighted
  averaging.
\newblock 2012.

\bibitem{NJLS09-1}
A.~S. Nemirovski, A.~Juditsky, G.~Lan, and A.~Shapiro.
\newblock Robust stochastic approximation approach to stochastic programming.
\newblock {\em SIAM Journal on Optimization}, 19:1574--1609, 2009.

\bibitem{nemyud:83}
A.~S. Nemirovski and D.~Yudin.
\newblock {\em Problem complexity and method efficiency in optimization}.
\newblock Wiley-Interscience Series in Discrete Mathematics. John Wiley, XV,
  1983.

\bibitem{Nest04}
Y.~E. Nesterov.
\newblock {\em Introductory Lectures on Convex Optimization: a basic course}.
\newblock Kluwer Academic Publishers, Massachusetts, 2004.

\bibitem{Nest06-2}
Y.~E. Nesterov.
\newblock Primal-dual subgradient methods for convex problems.
\newblock {\em Mathematical Programming}, 120:221--259, 2006.

\bibitem{Nest10-1}
Y.~E. Nesterov.
\newblock Efficiency of coordinate descent methods on huge-scale optimization
  problems.
\newblock Technical report, Center for Operations Research and Econometrics
  (CORE), Catholic University of Louvain, Feburary 2010.

\bibitem{Nest12-1}
Y.~E. Nesterov.
\newblock Subgradient methods for huge-scale optimization problems.
\newblock Technical report, Center for Operations Research and Econometrics
  (CORE), Catholic University of Louvain, Feburary 2012.

\bibitem{NiuRech11-1}
F.~Niu, B.~Recht, C.~R\'{e}, and S.~J. Wright.
\newblock Hogwild!: A lock-free approach to parallelizing stochastic gradient
  descent.
\newblock Manuscript, Computer Sciences Department, University of
  Wisconsin-Madison, 1210 W Dayton St, Madison, WI 53706, 2011.

\bibitem{pol90}
B.T. Polyak.
\newblock New stochastic approximation type procedures.
\newblock {\em Automat. i Telemekh.}, 7:98--107, 1990.

\bibitem{pol92}
B.T. Polyak and A.B. Juditsky.
\newblock Acceleration of stochastic approximation by averaging.
\newblock {\em SIAM J. Control and Optimization}, 30:838--855, 1992.

\bibitem{Rich12-1}
P.~Richt\'{a}rik and M.~Tak\'{a}\u{c}.
\newblock Iteration complexity of randomized block-coordinate descent methods
  for minimizing a composite function.
\newblock {\em Mathematical Programming}, 2012.
\newblock to appear.

\bibitem{RobMon51-1}
H.~Robbins and S.~Monro.
\newblock A stochastic approximation method.
\newblock {\em Annals of Mathematical Statistics}, 22:400--407, 1951.

\bibitem{ShaTew11}
S.~Shalev-Shwartz and A.~Tewari.
\newblock Stochastic methods for $l_1$ regularized loss minimization.
\newblock Manuscript, 2011.
\newblock Submitted to {\it Journal of Machine Learning Research}.

\bibitem{ShDeRu09}
A.~Shapiro, D.~Dentcheva, and A.~Ruszczy\'{n}ski.
\newblock {\em Lectures on Stochastic Programming: Modeling and Theory}.
\newblock SIAM, Philadelphia, 2009.

\bibitem{Teb97-1}
M.~Teboulle.
\newblock Convergence of proximal-like algorithms.
\newblock {\em SIAM Journal on Optimization}, 7:1069--1083, 1997.

\bibitem{Tseng01-1}
P.~Tseng.
\newblock Convergence of a block coordinate descent method for
  nondifferentiable minimization.
\newblock {\em Journal of Optimization Theory and Applications}, 109:475–494,
  2001.

\bibitem{TsengYun09-1}
P.~Tseng and S.~Yun.
\newblock A coordinate gradient descent method for nonsmooth separable
  minimization.
\newblock {\em Mathematical Programming}, 117:387--423, 2009.

\bibitem{Wright10-1}
S.~J. Wright.
\newblock Accelerated block-coordinate relaxation for regularized
  optimizations.
\newblock Manuscript, University of Wisconsin-Madison, Madison, WI, 2010.

\bibitem{Xiao10-1}
L.~Xiao.
\newblock Dual averaging methods for regularized stochastic learning and online
  optimization.
\newblock {\em J. Mach. Learn. Res.}, pages 2543--2596, 2010.

\bibitem{Xing02distancemetric}
E.~P. Xing, A.~Y. Ng, M.~I. Jordan, and S.~Russell.
\newblock Distance metric learning, with application to clustering with
  side-information.
\newblock In {\em Advances in Neural Information Processing Systems 15}, pages
  505--512. MIT Press, 2002.

\end{thebibliography}

\end{document}